\documentclass[11pt,a4paper]{article}
\usepackage[T1]{fontenc}
\usepackage[utf8]{inputenc}
\usepackage{lmodern}
\usepackage[margin=28mm]{geometry}
\usepackage{amsmath,amssymb,amsthm,mathtools}
\usepackage{microtype}
\usepackage[hidelinks]{hyperref}
\usepackage{bookmark}
\hypersetup{
  pdftitle={Mixing under monotone censoring: a detailed proof of the constant-density bound},
  pdfsubject={Mixing times, increasing subsets of the discrete cube, and killed Green functions},
  pdfauthor={},
  pdfkeywords={monotone censoring, hypercube, mixing time, Green function, hypercontractivity}
}
\allowdisplaybreaks[2]
\numberwithin{equation}{section}
\newtheorem{theorem}{Theorem}[section]
\newtheorem{proposition}[theorem]{Proposition}
\newtheorem{lemma}[theorem]{Lemma}
\newtheorem{corollary}[theorem]{Corollary}
\theoremstyle{definition}

\theoremstyle{remark}

\newtheorem{question}[theorem]{Question}
\newcommand{\E}{\mathbb E}
\newcommand{\Prob}{\mathbb P}
\newcommand{\R}{\mathbb R}
\newcommand{\Nzero}{\mathbb Z_{\ge0}}
\newcommand{\one}{\mathbf 1}
\newcommand{\cube}{\Omega_n}
\newcommand{\cE}{\mathcal E}
\newcommand{\Var}{\operatorname{Var}}
\newcommand{\supp}{\operatorname{supp}}

\newcommand{\tmix}{t_{\mathrm{mix}}}
\newcommand{\thit}{t_{\mathrm H}}

\newcommand{\ip}[3]{\langle #1,#2\rangle_{#3}}
\newcommand{\norm}[2]{\lVert #1\rVert_{#2}}
\newcommand{\dd}{\,\mathrm d}

\title{Mixing time under monotone censoring}

\author{
\textsc{Yiming Chen}\thanks{
School of Mathematical Sciences, Peking University,
Beijing 100871, China.
Email: \texttt{ymchenmath@math.pku.edu.cn}
}
\\[0.45em]
{\normalsize\normalfont\textsc{with an Appendix by Yuval Peres}\thanks{
Beijing Institute of Mathematical Sciences and Applications (BIMSA),
Beijing, China.
Email: \texttt{yperes@gmail.com}
}}
}

\date{}

\begin{document}
\maketitle

\begin{abstract}

We prove that the lazy random walk on the discrete cube, censored to any
increasing set of fixed positive density, mixes in time $O(n\log n)$,
answering a question of Ding and Mossel~\cite{DM}. More precisely, for every nonempty
increasing set $A\subseteq \{0,1\}^n$,
\begin{equation}
t_{\mathrm{mix}}(P)
\le
K\mu(A)^{-3}n\log(en).
\label{eq:mixing-time-bound}
\end{equation}
where $\mu$ is uniform on the cube and $K$ is an absolute constant. The proof uses hypercontractivity on the ambient cube to strengthen
Poincar\'e inequality on coordinate sections. A stopping-time occupation
inequality for increasing sets converts the resulting local bound
into a uniform bound on hitting times of large sets. See the appendix for a better estimates of the constant and the dependence on \(\mu(A)\) in \eqref{eq:mixing-time-bound}.

\end{abstract}

\medskip
\noindent\textbf{Keywords.}
Mixing times; monotone censoring; 
hypercontractivity; hitting times.

\medskip
\noindent\textbf{2020 MSC.}
Primary 60J10; Secondary 05C81, 60J45, 60J46.

%\begin{abstract}
%Let $A$ be a nonempty increasing subset of the discrete cube
%$\{0,1\}^n$, and write $a=2^{-n}|A|$. Consider the discrete-time chain
%that chooses a coordinate uniformly, resamples it from a fair Bernoulli
%variable, and rejects a proposal outside $A$. We prove that its
%worst-case total-variation mixing time at threshold $1/4$ is at most
%$K a^{-3}n\log(en)$, where $K$ is an absolute constant. More precisely,
%the expected time to hit any set of stationary measure at least $1/4$
%is at most $6148a^{-3}n\log(en)$, uniformly over the initial state.
%The proof constructs a large increasing core using random coordinate
%revelations, obtains a functional inequality on that core by averaging
%Poincar\'e inequalities on random coordinate sections, and applies this
%inequality to dyadic truncations of a killed Green function. An
%occupation inequality valid at arbitrary integrable stopping times
%absorbs the contribution outside the core after summation over all
%scales. The spectral theorem of Fei and Ferreira Pinto Jr. and the
%hitting-time characterization of Peres and Sousi are stated explicitly
%as external inputs. The precise hypercontractive inequality used in
%the argument is proved in the appendix.
%\end{abstract}

%\setcounter{tocdepth}{1}
%\tableofcontents
%\clearpage

\section{Main result and setup}\label{sec:setup}

The lazy random walk on $\{0,1\}^n$ mixes in time of order $n\log n$.
If moves leaving a subset $A$ are rejected, the resulting walk
converges to the uniform measure on $A$ whenever the induced graph
is connected, but its mixing time may be exponentially larger
\cite[Section~1]{FFP}. We show that the $O(n\log n)$ bound continues
to hold whenever $A$ is increasing and its density is bounded away
from zero.

Let $\Omega_n=\{0,1\}^n$ carry the uniform probability measure $\mu$,
and equip it with the coordinatewise order. A set $A\subseteq\Omega_n$
is \emph{increasing} if $x\in A$ and $x\leq y$ imply $y\in A$.
Throughout, $A$ is nonempty and increasing, and
$\pi=\mu(\,\cdot\mid A)$ denotes the uniform probability measure on
$A$. Thus $\mu(A)=2^{-n}|A|$ and $\pi(x)=|A|^{-1}$ for $x\in A$.

The censored walk updates a uniformly chosen coordinate to an
independent fair bit and accepts the update if the resulting
configuration belongs to $A$, otherwise it stays at its current
state. Writing $x^{\oplus i}$ for $x$ with its $i$th coordinate
flipped, its transition kernel is
\begin{equation}\label{eq:kernel}
 P(x,y)=
 \begin{cases}
  \dfrac{1}{2n},
    & y=x^{\oplus i}\in A\text{ for some }i\in[n],\\[4pt]
  1-\dfrac{\deg_A(x)}{2n}, & y=x,\\[4pt]
  0, & \text{otherwise},
 \end{cases}
\end{equation}
where $\deg_A(x)=|\{i\in[n]:x^{\oplus i}\in A\}|$.
The kernel is symmetric and satisfies $P(x,x)\geq 1/2$, so the walk
is lazy and reversible with respect to $\pi$. It is irreducible
because every point of $A$ can be joined to the all-one configuration
by a path of upward moves.

For a probability measure $\nu$ on $A$, write
$\|\nu-\pi\|_{\mathrm{TV}}
 =\frac12\sum_{y\in A}|\nu(y)-\pi(y)|$.
The mixing time is defined by
\begin{equation}\label{eq:tmix}
 t_{\mathrm{mix}}(P)
 =\min\left\{t\in\mathbb Z_{\geq0}:
       \max_{x\in A}\|P^t(x,\cdot)-\pi\|_{\mathrm{TV}}
       \leq\frac14\right\}.
\end{equation}

Ding and Mossel~\cite{DM} initiated the study of this walk. By
combining a directed isoperimetric inequality from monotonicity
testing with the FKG inequality, they obtained a conductance lower
bound of order $\mu(A)/n$. Cheeger's inequality then gives the mixing
bound $O(n^3\mu(A)^{-2})$, and in particular $O_c(n^3)$ when
$\mu(A)\geq c>0$. They posed the following question
\cite[Question~1.1]{DM}.

\begin{question}\label{question:DM}
For every $c\in(0,1]$, is there a constant $C(c)<\infty$ such that
$t_{\mathrm{mix}}(P)\leq C(c)n\log n$ for every $n\geq2$ and every
increasing $A\subseteq\{0,1\}^n$ with $\mu(A)\geq c$?
\end{question}

Fei and Ferreira Pinto Jr.~\cite{FFP} improved the mixing bound to
$O(n^2/\mu(A))$ by proving a spectral gap lower bound of order
$\mu(A)/n$. Their argument develops a directed $L^2$ Poincar\'e
inequality and an approximate FKG inequality. Chang, Sun, and
Yu~\cite{CSY} subsequently gave an elementary proof by induction on
coordinate restrictions, with a factor of two loss in the Poincar\'e
constant and the same order of mixing bound. 
%These results identify
%the correct relaxation scale at fixed density. The standard
%spectral-gap estimate for total variation mixing, however, still
%gives only $O_c(n^2)$.

A logarithmic Sobolev inequality
$\operatorname{Ent}_\pi(f^2)\leq C(c)n\mathcal E_A(f)$, where
$\mathcal E_A$ is the Dirichlet form of $P$, would give the desired
$O_c(n\log n)$ bound. Oliveira Santos, Tripathi, and
Youssef~\cite{OSTY} established such an inequality, with high
probability, when $A$ is chosen uniformly among increasing subsets
of the cube. This proves the conjectured mixing bound for a typical
increasing set. They also constructed increasing sets of density
bounded away from zero whose logarithmic Sobolev constant is of
order $n^2$. Thus a logarithmic Sobolev inequality at the scale
needed for the conjecture cannot hold uniformly over all increasing
sets of fixed positive density.

%We state it together with the hitting-time estimate
%used in its proof.

Our main theorem answers Question~\ref{question:DM} for every
increasing set.  Let $(X_t)_{t\geq0}$ be the chain with kernel $P$,
and let $\mathbb E_x$ denote expectation when $X_0=x$. For nonempty
$B\subseteq A$, define
\begin{equation}\label{eq:hitting-time}
t_H^P(1/4)
 =\max_{\substack{x\in A,\ B\subseteq A\\\pi(B)\geq1/4}}
   \mathbb E_x\tau_B,
\end{equation}
where $\tau_B=\inf\{t\in\mathbb Z_{\geq0}:X_t\in B\}$.
%\begin{theorem}\label{thm:main}
%For every $n\geq1$ and every nonempty increasing set
%$A\subseteq\{0,1\}^n$, the kernel~\eqref{eq:kernel} satisfies
%\begin{equation}\label{eq:main-hitting}
% t_H^P(1/4)
% \leq 44\,\frac{n}{\mu(A)}
% \log\frac{e}{\mu(A)}\log(en).
%\end{equation}
%Moreover,
%\begin{equation}\label{eq:main-mixing}
% t_{\mathrm{mix}}(P)
% \leq 150\,\frac{n}{\mu(A)}
% \log\frac{e}{\mu(A)}\log(en).
%\end{equation}
%%More precisely, there is an absolute constant $K<\infty$ such that
%%\begin{equation}\label{eq:main-mixing-refined}
%% t_{\mathrm{mix}}(P)
%% \leq K\frac{n}{\mu(A)}
%% \left[
%% \log(en)
%% +\log\frac{e}{\mu(A)}
%%  \log\left(1+\frac{n}{\log(e/\mu(A))}\right)
%% \right].
%%\end{equation}
%\end{theorem}
\begin{theorem}\label{thm:main}
For every $n\geq1$ and every nonempty increasing set
$A\subseteq\{0,1\}^n$, the kernel~\eqref{eq:kernel} satisfies
\begin{equation}\label{eq:main-hitting}
 t_H^P(1/4)\leq 6148\,\mu(A)^{-3}n\log(en).
\end{equation}
Moreover, there is an absolute constant $K<\infty$ such that
\begin{equation}\label{eq:main-mixing}
 t_{\mathrm{mix}}(P)\leq K\mu(A)^{-3}n\log(en).
\end{equation}
\end{theorem}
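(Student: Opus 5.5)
The mixing bound \eqref{eq:main-mixing} will follow from the hitting bound \eqref{eq:main-hitting} through the Peres--Sousi characterization for lazy reversible chains: $\tmix(P)\le C\,t_H^P(1/4)$ for an absolute $C$, since $P(x,x)\ge1/2$ and $P$ is reversible with respect to $\pi$. So the substance is to bound $\E_x\tau_B$ uniformly over $x\in A$ and over $B\subseteq A$ with $\pi(B)\ge 1/4$.

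\textbf{Green-function reduction.} Fix such a $B$ and write $h(x)=\E_x\tau_B$. This $h$ is the killed Green potential: $h=0$ on $B$ and $(I-P)h=1$ on $A\setminus B$. Then $\cE_A(h,h)=\sum_{x\notin B}\pi(x)h(x)=\E_\pi h$. The spectral gap of order $\mu(A)/n$ due to Fei and Ferreira Pinto Jr.\ only gives $\max h=O(n^2)$, so it is not enough.

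\textbf{Local functional inequality.} The improvement will come from a functional inequality that is stronger for functions of small support.
\begin{itemize}
\item Random coordinate sections of an increasing set are again increasing. On each section I apply a Poincar\'e inequality, and then average over the random sections.
\item Hypercontractivity of the noise operator on the ambient cube controls how the mass of a small-support function splits across sections. This upgrades Poincar\'e to a Faber--Krahn type bound: for $f$ vanishing outside a set $S$ with $\pi(S)=s$ small, $\Var$ or $\|f\|_2^2$ is controlled by $n\,\mathrm{poly}(1/\mu(A))\,\log(e/s)^{-1}\,\cE_A(f)$.
\item The inequality should hold on a large increasing ``core'', built by revealing coordinates at random, on which the section densities stay bounded below. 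This is where the powers of $\mu(A)^{-1}$ enter; I aim for $\mu(A)^{-3}$ overall.
\end{itemize}

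\textbf{Dyadic truncations.} Apply the inequality to the truncations $f_k=(h\wedge 2^{k+1}-2^k)_+$. Each $f_k$ is supported on $\{h>2^k\}$, and Markov's inequality gives $\pi(h>2^k)\le \E_\pi h/2^k$. Comparing $\cE_A(f_k)$ with the level-set contribution, using $(I-P)h=1$, yields a recursion for $\pi(h>2^k)$. A lower bound of $\E_\pi h$ is then obtained by summing over scales, giving $\E_\pi h=O(\mu(A)^{-3}n\log(en))$. The number of scales is $O(\log n)$, because $\max h$ is already polynomial in $n$ by the spectral gap bound.

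\textbf{Passing to the worst-case start.} The last step converts this into a bound on $\max_x h(x)$, and I expect it to be the main obstacle. Worst starts lie off the core, and the Green function can be large there. The intended tool is an occupation inequality valid at stopping times for increasing sets.
\begin{itemize}
\item Upward moves are always accepted, and $A$ is increasing.
\item Hence from any $x$ the walk reaches the core, or a set of comparable mass, within an expected time $O(n\log n)$. For example, one can couple with the unconstrained walk, whose coordinates equilibrate in time $O(n\log n)$, while monotonicity keeps the censored walk above it.
\item The strong Markov property at that stopping time then gives $\max_x h\le \E_{\text{core}}h+O(n\log n)$.
\end{itemize}
The delicate point is that the contribution outside the core must be absorbed after summing over all dyadic scales without losing a further $\log n$. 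For this I would prove the occupation inequality with explicit constants and track them to reach $6148$.
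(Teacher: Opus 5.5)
\textbf{There is a genuine gap, and it sits in the last step, which carries the whole theorem.} The strong Markov property at $\tau_C$ gives $h(x)=\E_x\tau_C+\E_x h(X_{\tau_C})$. Reaching $C$ is fine: $\E_x\tau_C=O(n\log(en)/\mu(A))$. But $X_{\tau_C}$ has the hitting distribution on $C$, not $\pi|_C$, so you would need $\max_{y\in C}h(y)$. That is the original problem again. Your local inequality controls only integrated $L^2(\mu)$ mass on $C$, and no inequality of that kind yields pointwise bounds.

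Moreover, the quantity your dyadic scheme is built to bound is already easy. Since $h|_B=0$ and $\pi(B)\ge1/4$, one has $\Var_\pi(f)\ge\pi(B)\pi(f^2)$ for $f|_B=0$. Combined with the Fei--Ferreira Pinto Jr.\ Poincar\'e inequality this gives
$\pi(h)=\cE_A(h)\ge\frac{\mu(A)}{8n}\pi(h^2)\ge\frac{\mu(A)}{8n}\pi(h)^2$,
so $\E_\pi\tau_B\le 8n/\mu(A)$ without any hypercontractivity. Your truncation steps therefore do no real work, and the entire difficulty is the worst-case start, which your plan does not resolve.

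\textbf{The missing idea is to put the source at the worst-case starting point.} Fix $x\in A\setminus B$ and use the Green density
$G(y)=\pi(y)^{-1}\E_x\sum_{t<\tau_B}\one_{\{X_t=y\}}$.
Then $\cE_A(G,f)=f(x)$ whenever $f|_B=0$, and $\int_A G\dd\mu=\mu(A)\E_x\tau_B$ is exactly the worst-case quantity.

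\begin{itemize}
\item Because the source is a point mass, every truncation $f_s=\min\{(G-s)_+,s\}$ has energy at most $s$. This follows from $(\psi(u)-\psi(v))^2\le(u-v)(\psi(u)-\psi(v))$. With the Dirichlet Poincar\'e inequality it also gives $\mu(G>s)\le 8n/s$.
\item $G$ can be exponentially large near $x$ (only $G\le 8n2^n$ holds), so there are about $n$ dyadic levels, not $O(\log n)$. One takes $\theta_j=\frac{2\log(s/8n)}{2\log(s/8n)+j\log2}$. This makes the hypercontractive error coefficient the same at every level, while $\sum_j\theta_j=O(\log(s/8n)\log(en))$; that harmonic sum is the source of the factor $\log(en)$.
\item Summing the levels gives
$\int_C G\dd\mu\le\frac{\mu(C)}2\int_A G\dd\mu+O\bigl(n\log\frac{e}{\mu(A)}\log(en)+\frac{n}{\mu(A)}\bigr)$.
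The remainder over all of $A$ cannot be avoided, because the section argument only sees mass on $C$.
\item The occupation inequality is then used at $T=\tau_B$, not to reach the core:
$\mu(C)\int_A G\dd\mu\le\int_C G\dd\mu+\mu(A)d_n$.
In words, before hitting $B$ the walk spends at least a $\mu(C)$-fraction of its time in $C$, up to the coupon-collector error $d_n$.
\item Combining the last two displays absorbs $\frac{\mu(C)}2\int_A G\dd\mu$ and bounds $\E_x\tau_B$ directly.
\end{itemize}

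Your coupling intuition (upward moves are always accepted, and the censored walk dominates the free walk) is the right ingredient for the occupation inequality. It has to be applied at $\tau_B$ to a Green function with source at $x$, not to $h=\E_{\cdot}\tau_B$ via the hitting time of the core.
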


In particular, the density assumption in Question~\ref{question:DM}
is sufficient to recover the mixing bound of the full cube.

\begin{corollary}\label{cor:constant-density}
For every $c\in(0,1]$, there is a constant $C(c)<\infty$ such that,
for every $n\geq2$ and every increasing $A\subseteq\{0,1\}^n$ with
$\mu(A)\geq c$,
\begin{equation}\label{eq:constant-density}
 t_{\mathrm{mix}}(P)\leq C(c)n\log n.
\end{equation}
\end{corollary}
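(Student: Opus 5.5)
The corollary follows directly from Theorem~\ref{thm:main}; no new probabilistic input is needed. Fix $c\in(0,1]$ and let $A\subseteq\{0,1\}^n$ be increasing with $\mu(A)\geq c$, and $n\geq 2$. Since $x\mapsto x^{-3}$ is decreasing on $(0,1]$, we have $\mu(A)^{-3}\leq c^{-3}$. The bound \eqref{eq:main-mixing} then gives
\[
 t_{\mathrm{mix}}(P)\leq K c^{-3} n\log(en),
\]
where $K$ is the absolute constant of Theorem~\ref{thm:main}.

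It remains only to compare $\log(en)=1+\log n$ with $\log n$. For $n\geq 2$ we have $\log n\geq\log 2$, hence
\[
 1+\log n\leq\Bigl(1+\frac{1}{\log 2}\Bigr)\log n .
\]
Therefore \eqref{eq:constant-density} holds with
\[
 C(c)=K\Bigl(1+\frac{1}{\log 2}\Bigr)c^{-3}.
\]
This constant is finite and independent of $n$ and $A$.

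The restriction $n\geq 2$ is exactly what keeps $\log n$ bounded away from zero. For $n=1$ the right-hand side of \eqref{eq:constant-density} would vanish, which is why the corollary, like Question~\ref{question:DM}, excludes that case.

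There is no real obstacle here: all the difficulty lies in Theorem~\ref{thm:main}. The only points to check are the monotonicity in $\mu(A)$ and the elementary logarithm comparison above. These also show that the dependence $C(c)=O(c^{-3})$ is inherited directly from \eqref{eq:main-mixing}.
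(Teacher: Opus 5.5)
Your proposal is correct and is essentially how the paper obtains the corollary: the paper treats it as an immediate consequence of \eqref{eq:main-mixing} in Theorem~\ref{thm:main}. Your two checks, $\mu(A)^{-3}\le c^{-3}$ and $\log(en)\le(1+1/\log 2)\log n$ for $n\ge2$, are exactly the bookkeeping left implicit there.
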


%The order in $n$ is optimal, since the full cube is included in this
%class. The improvement over the previous bounds concerns the
%dimension at fixed positive density; when $\mu(A)$ tends to zero,
%the bound $O(n^2/\mu(A))$ from~\cite{FFP} can be stronger.

\subsection{Proof outline}
We construct an increasing set $C\subseteq A$ with
$\mu(C)\geq\mu(A)/2$ using random coordinate revelations.
For each point of $C$, a random coordinate section through that
point has density at least $\mu(A)/8$ with probability at least
$3/4$, at every noise level.
Averaging Poincar\'e inequalities on these sections and applying
the Bonami--Beckner hypercontractive inequality~\cite{Bonami,Beckner} on the full cube
gives a local functional inequality.

The issue is that this estimate bounds the $L^2$ mass only on $C$, whereas the walk
moves on all of $A$. We address this through an occupation inequality:
for every increasing $C\subseteq A$, every $x\in A$, and every
stopping time $T$ with respect to the natural filtration of the walk
such that $\mathbb E_xT<\infty$,
\begin{equation}\label{eq:occupation-overview}
 \mu(C)\mathbb E_xT
 \leq
 \mathbb E_x\!\left[\sum_{t=0}^{T-1}\mathbf1_C(X_t)\right]
 +n\sum_{k=1}^{n}\frac1k.
\end{equation}
The additive error is the expected time needed to update every
coordinate of the uncensored walk. The inequality follows by
comparing the drifts of a bounded increasing potential under the
censored and uncensored dynamics, and then stopping the comparison
at $T$.

To bound $\mathbb E_x\tau_B$, we apply the local functional inequality
to dyadic truncations of the Green function of the walk killed on
entering $B$. Summing over the truncation levels bounds the expected
occupation of $C$ before $\tau_B$ by
$\frac12\mu(C)\mathbb E_x\tau_B$, up to an explicit error.
Inequality~\eqref{eq:occupation-overview}, with $T=\tau_B$, gives the
complementary lower bound. Absorbing the term involving
$\mathbb E_x\tau_B$ yields~\eqref{eq:main-hitting}.
The characterization of mixing times by
hitting times of large sets due to Peres and Sousi~\cite{PS} then
gives~\eqref{eq:main-mixing}.

\subsection{Notation}

For functions on $A$, write $\pi(f)=\sum_{x\in A}\pi(x)f(x)$,
$\langle f,g\rangle_\pi=\pi(fg)$, and
$\operatorname{Var}_\pi(f)=\pi(f^2)-\pi(f)^2$.
The Dirichlet form is
\begin{align}
 \mathcal E_A(f,g)
 &=\langle f,(I-P)g\rangle_\pi\notag\\
 &=\frac1{4n}\sum_{x\in A}\pi(x)
   \sum_{\substack{i\in[n]\\x^{\oplus i}\in A}}
   \bigl(f(x)-f(x^{\oplus i})\bigr)
   \bigl(g(x)-g(x^{\oplus i})\bigr).
 \label{eq:dirichlet}
\end{align}
We abbreviate $\mathcal E_A(f,f)$ to $\mathcal E_A(f)$.
Integrals over subsets of $A$ with respect to $\mu$ use the
unnormalized restriction of the ambient measure; in particular,
\begin{equation}\label{eq:mu-pi}
 \int_A f\,d\mu=\mu(A)\pi(f).
\end{equation}
For a measure $\nu$ and $1\leq p<\infty$, we use
$\|f\|_{p,\nu}=(\int|f|^p\,d\nu)^{1/p}$.
When $\nu=\mu$, the domain is the whole cube unless otherwise
indicated. For $f:A\to\mathbb R$, its zero extension to $\Omega_n$
is denoted by $\widetilde f$. All logarithms are natural.

Section~2 introduces the preliminary inequalities. Section~3 constructs
$C$ and proves the local functional estimate. Section~4 establishes
the occupation inequality, and Section~5 develops the estimates
for the killed Green function. In Section~6 we prove
Theorem~\ref{thm:main}.  The Appendix provides a sharper quantitative mixing-time bound and improves the dependence on $\mu(A)$ in Theorem~\ref{thm:main}.

\section{Preliminary}
\label{sec:inputs}

\subsection{The Poincar\'e inequality for increasing sets}

We use the following theorem in the normalization of
$\cE_A$.

\begin{theorem}[Fei--Ferreira Pinto Jr.~{\cite[Definition~1.5 and
Theorem~1.6]{FFP}}]
\label{thm:FFP}
Let $A\subseteq\Omega_n$ be nonempty and increasing. Then every
real-valued function $f$ on $A$ satisfies
\begin{equation}
 \Var_{\pi}(f)
 \leq
 \frac{n}{1-\sqrt{1-\mu(A)}}\,\cE_A(f).
 \label{eq:external-poincare}
\end{equation}
\end{theorem}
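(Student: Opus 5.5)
The plan is to prove \eqref{eq:external-poincare} by induction on $n$. We split along the last coordinate, in the spirit of the coordinate-restriction argument of Chang, Sun, and Yu~\cite{CSY}, but keep track of the constant through the function $\varphi(a)=1/(1-\sqrt{1-a})$ rather than losing a factor of two. Equivalently, writing $N(f)=n\,\cE_A(f)$ for the normalized edge energy, we aim to show $\Var_\pi(f)\le\varphi(\mu(A))\,N(f)$ with $\varphi$ nonincreasing and $\varphi(1)=1$. The base case $n=1$ is a direct check: $A$ is either $\{1\}$, where the variance is $0$, or $\{0,1\}$, which is the lazy two-point walk.

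For the inductive step, let $A_b=\{x'\in\Omega_{n-1}:(x',b)\in A\}$ for $b\in\{0,1\}$. Both slices are increasing, and $A_0\subseteq A_1$ because $A$ is increasing. Write $a_b=\mu_{n-1}(A_b)$, so that $a=\mu(A)=(a_0+a_1)/2$, and let $\pi_b$ be uniform on $A_b$, $f_b=f(\cdot,b)$ and $m_b=\pi_b(f_b)$. The variance decomposes as
\[
\Var_\pi(f)=\tfrac{a_0}{2a}\Var_{\pi_0}(f_0)+\tfrac{a_1}{2a}\Var_{\pi_1}(f_1)+\tfrac{a_0a_1}{4a^2}(m_1-m_0)^2 .
\]
The energy splits as $N(f)=\tfrac{a_0}{2a}N_0(f_0)+\tfrac{a_1}{2a}N_1(f_1)+\tfrac{a_0}{2a}\cdot\tfrac12\pi_0\bigl((f_1-f_0)^2\bigr)$, where $N_b$ is the $(n-1)$-dimensional energy on $A_b$ and the last term comes from the vertical edges, which exist exactly over $A_0$. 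By induction $\Var_{\pi_b}(f_b)\le\varphi(a_b)N_b(f_b)$, and since $\varphi$ is monotone, $\varphi(a_1)\le\varphi(a_0)$.

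Controlling $(m_1-m_0)^2$ is the key step. The vertical edges only see $A_0$, so we insert $m_1'=\pi_0(f_1)$, the average of $f_1$ over the smaller set $A_0\subseteq A_1$. For any weight $\lambda>0$,
\[
(m_1-m_0)^2\le(1+\lambda)(m_1'-m_0)^2+(1+\lambda^{-1})(m_1-m_1')^2 .
\]
The first term is at most $\pi_0\bigl((f_1-f_0)^2\bigr)$ by Cauchy--Schwarz, so it is paid for by the vertical energy. For the second term, a standard estimate for the mean over a subset of relative measure $r=a_0/a_1$ gives $(m_1-m_1')^2\le\frac{1-r}{r}\Var_{\pi_1}(f_1)$. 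Feeding in the inductive bound for $\Var_{\pi_1}(f_1)$ charges this term to $N_1(f_1)$ as well.

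Collecting the coefficients of $N_0$, $N_1$ and the vertical energy reduces the induction to a scalar inequality. After optimizing over $\lambda$, one must show that for all $0\le a_0\le a_1\le1$ the three coefficients are dominated by $\varphi(a)$ with $a=(a_0+a_1)/2$. I expect this scalar inequality to be the main obstacle: it must hold with no loss at all, and the square-root form of $\varphi$ should be exactly what makes the optimal $\lambda$, which behaves like $\sqrt{(1-r)/r}$ up to constants, close the recursion. If the naive version fails near $a_0\to0$, I would first try weighting the intermediate mean by $f_1$ averaged over a mixture of $A_0$ and $A_1$. Failing that, I would follow~\cite{FFP} and prove a directed Poincaré inequality, with energy only on upward edges from $A$, together with an approximate FKG step, and then symmetrize.
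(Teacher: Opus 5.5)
The paper does not prove this statement. It quotes it from \cite{FFP}, whose proof goes through a directed $L^2$ Poincar\'e inequality and an approximate FKG inequality, so a coordinate induction would be a genuinely different route. As you set it up, however, your induction does not close: the scalar inequality you reduce to is false, not merely delicate. Two separate things go wrong.

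\emph{The lower slice.} In your scheme, $p_0\Var_{\pi_0}(f_0)$, with $p_0=a_0/(2a)$, is controlled only by $p_0\varphi(a_0)N_0(f_0)$. On the other side, $N_0$ enters $\varphi(a)N(f)$ only through $\varphi(a)p_0N_0(f_0)$, and $\lambda$ touches neither term. Coefficient domination therefore needs $\varphi(a_0)\le\varphi(a)$. This fails whenever $0<a_0<a_1$, because $\varphi$ is decreasing and $a_0<a$; the monotonicity you invoke, $\varphi(a_1)\le\varphi(a_0)$, points the wrong way.

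The vertical slack does not compensate. Take $A=\Omega_3\setminus\{000\}$: every split has $a_0=3/4$, $a_1=1$, $a=7/8$, so $\varphi(a_0)=2>\varphi(7/8)\approx1.547$. Let $f$ equal $-1,0,1$ at $100,110,010$ and $0$ elsewhere on $A$, and split along $x_3$. Then the inductive bound on $A_0$ is an equality, $f_1\equiv0$, and your chain yields only $\Var_\pi(f)\le\frac{2}{7}+\frac{8}{49}(1+\lambda)$. This exceeds $\frac{22}{49}\approx0.449$ for every $\lambda>0$, whereas $\varphi(7/8)N(f)=\frac{2}{7}\varphi(7/8)\approx0.442$. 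Splitting along $x_1$ or $x_2$ instead gives at best $\frac{23}{49}$.

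\emph{The $\lambda$ trade-off.} Even with $N_0$ set aside, the two constraints on $\lambda$ are compatible only when $a_1=1$ or $a_0=a_1$. The vertical-energy constraint is $1+\lambda\le a\varphi(a)/a_1$. The $N_1$ constraint is $(1+\lambda^{-1})(a_1-a)/a\le\varphi(a)/\varphi(a_1)-1$. Write $\psi=1/\varphi$ and use the identity $a=\psi(a)\bigl(2-\psi(a)\bigr)$. Compatibility then reduces to $\psi(a)\psi(a_1)\bigl(\psi(a_1)-\psi(a)\bigr)\bigl(1-\psi(a_1)\bigr)\le0$. At $a_1=1$ it holds with equality for $\lambda=\sqrt{1-a}$, which is the square-root mechanism you anticipated, but it holds only there. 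For instance, as $a_0\uparrow a_1=a<1$ the constraints become $\lambda\le s/(1-s)$ and $\lambda\ge 2s/(1-s)$, with $s=\sqrt{1-a}$.

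\emph{What is missing.} You need a mechanism that charges the lower slice to the vertical edges and the upper slice, using $A_0\subseteq A_1$. Since $a_0$ can lie far below $a$ (even exponentially small while $a\approx 1/2$), the induction hypothesis on $A_0$ is too weak by itself. One would combine, for example,
\[
\Var_{\pi_0}(f_0)\le(1+\eta)\Var_{\pi_0}(f_1)+(1+\eta^{-1})\pi_0\bigl((f_1-f_0)^2\bigr)
\]
with the joint bound (valid for $r<1$)
\[
r\Var_{\pi_0}(f_1)+\frac{r}{1-r}(m_1-m_1')^2\le\Var_{\pi_1}(f_1).
\]
The second inequality lets a single copy of the upper-slice variance pay both for the inserted mean $m_1'$ and for the lower-slice fluctuations. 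Whether some such accounting closes with exactly $1/(1-\sqrt{1-a})$ is the real content of the theorem, and your proposal does not supply it. The elementary coordinate-restriction induction of \cite{CSY} loses a factor of two. Your final fallback, following \cite{FFP}, is just the citation the paper already relies on, not an argument.
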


Since
\begin{equation}
 1-\sqrt{1-\mu(A)}
 =
 \frac{\mu(A)}{1+\sqrt{1-\mu(A)}}
 \geq \frac{\mu(A)}{2},
 \label{eq:density-denominator}
\end{equation}
\eqref{eq:external-poincare} gives
\begin{equation}
 \Var_\pi(f)
 \leq
 \frac{2n}{\mu(A)}\cE_A(f).
 \label{eq:global-poincare}
\end{equation}

For later use we describe the coordinate sections explicitly.
For $R\subseteq[n]$ and $z\in\{0,1\}^{R^c}$, define
$A_{R,z}=\{u\in\{0,1\}^{R}:(u,z)\in A\}$, and let $\mu_{R,z}$
be the uniform probability measure on the full coordinate section
with frozen coordinates $z$. Thus, for $h:\Omega_n\to\mathbb R$,
\[
 \mu_{R,z}(h)
 =2^{-|R|}\sum_{u\in\{0,1\}^{R}}h(u,z).
\]
In particular, $\mu_{R,z}(A)=2^{-|R|}|A_{R,z}|$.
When $A_{R,z}$ is nonempty, let
$\pi_{R,z}=\mu_{R,z}(\,\cdot\mid A)$.

For a function $f$ on $A$, define
\begin{equation}
 \cE_{R,z}(f)
 =
 \frac1{4n}
 \sum_{\substack{x\in A\\x_{R^c}=z}}
 \pi_{R,z}(x)
 \sum_{\substack{i\in R\\x^{\oplus i}\in A}}
 (f(x)-f(x^{\oplus i}))^2.
 \label{eq:section-energy}
\end{equation}
The denominator is the original dimension $n$, even when $|R|<n$.
For an empty section, set $\cE_{R,z}(f)=0$. Then we can apply the Poincar\'e inequality in Theorem~\ref{thm:FFP}  to each section.

\begin{lemma}
\label{lem:section-poincare}
For every nonempty section and every real-valued function $f$ on $A$,
\begin{equation}
 \Var_{\pi_{R,z}}(f)
 \leq
 \frac{2n}{\mu_{R,z}(A)}\cE_{R,z}(f).
 \label{eq:section-poincare}
\end{equation}
\end{lemma}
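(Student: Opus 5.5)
The plan is to apply Theorem~\ref{thm:FFP} directly to the section, viewed as a cube of dimension $m=|R|$, and then check that the normalizations agree.

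\textbf{Step 1: the section is an increasing set.} Fix $R$ and $z$ with $A_{R,z}$ nonempty, and put $m=|R|\ge 1$. (If $m=0$, the section is a single point, so the variance vanishes and there is nothing to prove.) The set $A_{R,z}\subseteq\{0,1\}^R$ is increasing. Indeed, if $u\in A_{R,z}$ and $u\le u'$, then $(u,z)\le(u',z)$, so $(u',z)\in A$ because $A$ is increasing. Its density in $\{0,1\}^R$ is exactly $\mu_{R,z}(A)$. The measure $\pi_{R,z}$ is the uniform measure on $A_{R,z}$.

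\textbf{Step 2: apply Theorem~\ref{thm:FFP} on $\{0,1\}^R$.} Let $g(u)=f(u,z)$ on $A_{R,z}$. Theorem~\ref{thm:FFP} in dimension $m$ gives $\Var_{\pi_{R,z}}(g)\le \frac{m}{1-\sqrt{1-\mu_{R,z}(A)}}\,\mathcal E^{(m)}(g)$. Here $\mathcal E^{(m)}$ is the Dirichlet form \eqref{eq:dirichlet} written in dimension $m$, so it carries the prefactor $\frac1{4m}$ and sums over neighbours $i\in R$.

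\textbf{Step 3: match the normalizations.} Flipping a coordinate $i\in R$ of $x=(u,z)$ gives exactly the neighbour of $u$ in $\{0,1\}^R$. Condition $x^{\oplus i}\in A$ is the same as $u^{\oplus i}\in A_{R,z}$, and $\pi_{R,z}(x)$ is the uniform weight. Comparing with \eqref{eq:section-energy}, this gives $\mathcal E^{(m)}(g)=\frac{n}{m}\,\mathcal E_{R,z}(f)$. Hence $m\,\mathcal E^{(m)}(g)=n\,\mathcal E_{R,z}(f)$, and the bound becomes $\Var_{\pi_{R,z}}(f)\le \frac{n}{1-\sqrt{1-\mu_{R,z}(A)}}\,\mathcal E_{R,z}(f)$.

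\textbf{Step 4: simplify the constant.} Applying \eqref{eq:density-denominator} with $\mu_{R,z}(A)$ in place of $\mu(A)$ yields \eqref{eq:section-poincare}.

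The only point needing care is the bookkeeping in Step 3. One must check that the factor $1/(4n)$ in \eqref{eq:section-energy}, as opposed to $1/(4m)$, is exactly compensated by the factor $m$ in the Poincar\'e constant. I expect no real obstacle beyond this.
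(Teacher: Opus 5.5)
Your proposal is correct and follows the paper's proof essentially step for step. You check that $A_{R,z}$ is increasing, dispose of the case $R=\varnothing$, and apply Theorem~\ref{thm:FFP} in dimension $|R|$. You then observe that the factor $|R|$ in the Poincar\'e constant cancels the $1/(4|R|)$ normalization, leaving $n\,\cE_{R,z}(f)$, and you finish with $1-\sqrt{1-\mu_{R,z}(A)}\ge\mu_{R,z}(A)/2$.
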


\begin{proof}

The set $A_{R,z}$ is increasing in its free coordinates. Indeed,
increasing the coordinates in $R$ increases the corresponding point
of the full cube while leaving the coordinates in $R^c$ fixed, so
membership in $A$ is preserved. If $R=\varnothing$, every nonempty section consists of a single
point, and both sides of \eqref{eq:section-poincare} are zero.

Suppose now that $R\neq\varnothing$. Apply
Theorem~\ref{thm:FFP} to the increasing set
$A_{R,z}\subseteq\{0,1\}^{R}$.  Under the identification of $A_{R,z}$ with
$A\cap\{x\in\Omega_n:x_{R^c}=z\}$, then we have
\[
 \frac14
 \sum_{\substack{x\in A\\x_{R^c}=z}}
 \pi_{R,z}(x)
 \sum_{\substack{i\in R\\x^{\oplus i}\in A}}
 (f(x)-f(x^{\oplus i}))^2
 =
 n\cE_{R,z}(f).
\]
Therefore
\[
 \Var_{\pi_{R,z}}(f)
 \le
 \frac{n}
 {1-\sqrt{1-\mu(A\mid x_{R^c}=z)}}
 \cE_{R,z}(f)
 \le
 \frac{2n}{\mu(A\mid x_{R^c}=z)}
 \cE_{R,z}(f),
\]
where the last inequality follows from
\[
 1-\sqrt{1-\mu(A\mid x_{R^c}=z)}
 \ge
 \frac12\mu(A\mid x_{R^c}=z).
\]

\end{proof}

\subsection{Hitting large sets and mixing}

For a finite irreducible Markov kernel $Q$ with stationary
distribution $\nu$, let $Q_{\mathrm L}=\frac{I+Q}{2}$ be the lazy version of $Q$ to eliminate periodicity, then define
\[
 \thit^Q(\alpha)
 =\max_{\substack{x,\ B\\ \nu(B)\ge\alpha}}
       \E_x^Q\tau_B.
\]

Peres--Sousi~\cite{PS} show that the mixing time can be characterized by the time to hit sufficiently large sets.
\begin{theorem}[\cite{PS}]
\label{thm:PS}

For every fixed $\alpha\in(0,1/2)$ there exists a finite constant
$C_\alpha$, depending only on $\alpha$, such that every finite
irreducible reversible Markov kernel $Q$ satisfies
\begin{equation}
 \tmix(Q_{\mathrm L})
 \le
 C_\alpha\thit^Q(\alpha).
 \label{eq:PS}
\end{equation}
Here the mixing threshold is $1/4$. 

\end{theorem}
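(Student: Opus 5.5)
Since Theorem~\ref{thm:PS} is imported from \cite{PS}, the plan is to reproduce the Peres--Sousi argument in outline. That argument rests on one analytic input, Starr's maximal inequality for reversible Markov semigroups, and on the observation that hitting a large set and then running the chain for a while is as good as mixing. Throughout, $Q_{\mathrm L}$ is lazy and reversible with respect to $\nu$. Hence $Q_{\mathrm L}$ has nonnegative spectrum, and $t\mapsto \|Q_{\mathrm L}^t(x,\cdot)-\nu\|_{\mathrm{TV}}$ is nonincreasing. We also use that hitting times for $Q_{\mathrm L}$ are exactly twice those for $Q$, so it suffices to bound $\tmix(Q_{\mathrm L})$ by a constant times $\thit^{Q_{\mathrm L}}(\alpha)$.

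\emph{Step 1 (good set via the maximal inequality).} Fix $t$ and a set $D$, and put $f=\one_D-\nu(D)$. Starr's maximal inequality for the positive self-adjoint contraction $Q_{\mathrm L}$ gives $\norm{\sup_{s\ge0}|Q_{\mathrm L}^{s}f|}{2,\nu}\le 2\norm{f}{2,\nu}\le 2$. The goal is a uniform version of this statement. I would apply it to $g_D=\one_D-\nu(D)$ with $s\ge t$, and use Chebyshev to conclude that the \emph{bad set}
\[
 G_t^c(D,\varepsilon)=\Bigl\{y:\sup_{s\ge t}\bigl|Q_{\mathrm L}^{s}(y,D)-\nu(D)\bigr|>\varepsilon\Bigr\}
\]
has $\nu$-measure at most $4\varepsilon^{-2}\norm{Q_{\mathrm L}^{t}g_D}{2,\nu}^2$. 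To make this small I would take $t$ of order the relaxation-type time $t_{\mathrm{rel}}\log(1/\varepsilon)$. This is not yet a mixing-time scale, so Step~1 should instead be run at the scale $t\asymp\thit(\alpha)$ itself. The point is that the maximal inequality only requires $\nu(G_t^c)\le 1-\alpha$, which holds for a suitable absolute choice of $\varepsilon$ depending on $\alpha$.

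\emph{Step 2 (hit, then mix).} From any $x$, the chain reaches $G=G_t(D,\varepsilon)$ at time $\tau_G$ with $\E_x\tau_G\le\thit(\alpha)$, since $\nu(G)\ge\alpha$. By Markov's inequality, $\tau_G\le 2\thit(\alpha)/\delta$ with probability at least $1-\delta$. By the strong Markov property and the definition of $G$, for every $s\ge t$,
\[
 \bigl|\Prob_x\bigl(X_{\tau_G+s}\in D\bigr)-\nu(D)\bigr|\le\varepsilon .
\]
Because $\sup_{s\ge t}$ was built into $G$, the random shift $\tau_G$ can be replaced by a deterministic time $u=t+2\thit(\alpha)/\delta$, up to an error $\delta+\varepsilon$. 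This gives $|Q_{\mathrm L}^{u}(x,D)-\nu(D)|\le \varepsilon+\delta$ for every $D$, uniformly in $x$.

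\emph{Step 3 (main obstacle).} The hard part is the bootstrap. One must show that the event $\{X_{\tau_G+s}\in D\}$, evaluated along the random time $\tau_G$, controls $X_u\in D$ at a fixed $u$. One must also obtain this uniformly over all sets $D$, hence total variation, rather than set by set. Uniformity over $D$ is automatic once $u$ does not depend on $D$. The fixed-time issue is exactly why the supremum over $s\ge t$ appears in $G$. On $\{\tau_G\le u-t\}$ we have $u-\tau_G\ge t$, so the bound from Step~2 applies to the remaining time. I expect the delicate point to be making $t$ itself $O(\thit(\alpha))$ without circularity. I would first try to show that $\norm{Q_{\mathrm L}^{t}g_D}{2,\nu}$ is already small at $t=C\thit(\alpha)$. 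Failing that, I would fall back on the route of \cite{PS} through geometric mixing times: prove $t_{\mathrm G}\le C\thit(\alpha)$ by the argument above with a geometric random time, and then use $\tmix(Q_{\mathrm L})\le C' t_{\mathrm G}$ for lazy reversible chains. Choosing $\varepsilon$ and $\delta$ so that $\varepsilon+\delta\le1/4$ yields \eqref{eq:PS} with $C_\alpha$ depending only on $\alpha$.
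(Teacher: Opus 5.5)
There is a genuine gap in Step~1. Your outline is the same scheme the paper uses in Lemma~\ref{chenapp:lem:direct-tv} of the appendix, for $\alpha=1/4$ and following Basu--Hermon--Peres: a good set defined through the maximal inequality with $\sup_{s\ge t}$, hitting it within $O(\thit(\alpha))$ steps, then the strong Markov property on $\{\tau_G\le u-t\}$. For general $\alpha$ the paper simply cites \cite{PS}. The gap is exactly where you flag the difficulty, and the resolution you propose there is wrong. Chebyshev and the maximal inequality give $\nu(G_t^c)\le 4\varepsilon^{-2}\norm{Q_{\mathrm L}^t g_D}{2,\nu}^2$. But $\varepsilon$ is not free, since you need $\varepsilon+\delta\le 1/4$. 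With only the trivial bound $\norm{Q_{\mathrm L}^t g_D}{2,\nu}^2\le 1/4$ you get $\nu(G_t^c)\le\varepsilon^{-2}$, and $\varepsilon^{-2}\ge 16$. So no choice of $\varepsilon$ gives $\nu(G_t^c)\le 1-\alpha$ at $t\asymp\thit(\alpha)$. What you need is quantitative decay, $\norm{Q_{\mathrm L}^t g_D}{2,\nu}^2\le e^{-2\lambda t}/4$ with $\lambda$ the spectral gap of $Q_{\mathrm L}$. This forces $t\ge(2\lambda)^{-1}\log\bigl(1/((1-\alpha)\varepsilon^2)\bigr)$, so the whole argument hinges on $\lambda^{-1}\le C\,\thit(\alpha)$, which you never prove. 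The geometric-time fallback does not avoid this. It reruns the same Step~1, and its second half, $\tmix(Q_{\mathrm L})\le C' t_{\mathrm G}$, is itself one of the main theorems of \cite{PS}.

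The missing ingredient is the elementary bound $\lambda^{-1}\le\thit^{Q_{\mathrm L}}(\alpha)$, which the appendix proves by optional stopping.
\begin{itemize}
\item Take a nonconstant eigenfunction $v$ with $Q_{\mathrm L}v=(1-\lambda)v$. Since $\nu(v)=0$, after a sign change and rescaling you may assume $\max v=1$ and $\nu(v\le 0)\ge 1/2\ge\alpha$.
\item Start at a maximizer $x$ and let $B=\{v\le 0\}$. Then $v(Y_j)+\lambda\sum_{i<j}v(Y_i)$ is a martingale.
\item Stop it at $\tau_B$; this is legitimate because $v$ is bounded and $\E_x\tau_B<\infty$. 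Since $v(Y_{\tau_B})\le 0$ and $0<v\le 1$ before $\tau_B$, you get $1\le\lambda\E_x\tau_B\le\lambda\thit^{Q_{\mathrm L}}(\alpha)=2\lambda\thit^{Q}(\alpha)$.
\end{itemize}
With this in hand, take $\varepsilon=\delta=1/8$ and $t=\lceil(2\lambda)^{-1}\log(64/(1-\alpha))\rceil=O_\alpha(\thit^Q(\alpha))$. Then $\nu(G)\ge\alpha$ for every $D$, and your Steps~2--3 go through verbatim with $u=t+16\,\thit^Q(\alpha)$. This gives $\tmix(Q_{\mathrm L})\le C_\alpha\thit^Q(\alpha)$. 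The paper replaces Markov's inequality by an exponential tail bound for $\tau_B$, but only to sharpen constants.
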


\subsection{Hypercontractivity on the discrete cube.}

For $0\leq\rho\leq1$ and $x\in\Omega_n$, let $Y$ be obtained
from $x$ by independently retaining each coordinate with
probability $\rho$ and otherwise replacing it by a fresh fair bit.
Define $T_\rho f(x)=\mathbb E[f(Y)]$. Then we have
\begin{equation}
 \norm{T_\rho f}{2,\mu}
 \le\norm{f}{1+\rho^2,\mu}.
 \label{eq:hypercontractivity}
\end{equation}
\eqref{eq:hypercontractivity} is the $L^p$-to-$L^2$ form of cube hypercontractivity
\cite{Bonami,Beckner}. 

%For $0\leq\rho\leq1$ and $x\in\Omega_n$, let $Y$ be obtained
%from $x$ by independently retaining each coordinate with
%probability $\rho$ and otherwise replacing it by a fresh fair bit.
%Define $T_\rho f(x)=\mathbb E[f(Y)]$.
%The operators $T_\rho$ are self-adjoint on $L^2(\mu)$ and satisfy
%$T_\rho T_\sigma=T_{\rho\sigma}$ for $0\leq\rho,\sigma\leq1$. The Bonami--Beckner inequality~\cite{Bonami,Beckner} states that
%\begin{equation}
% \norm{T_\rho f}{2,\mu}
% \leq\norm{f}{1+\rho^2,\mu}.
% \label{eq:hypercontractivity}
%\end{equation}
%In particular, for $0\leq\theta\leq1$,
%\begin{equation}
% \langle f,T_{1-\theta}f\rangle_\mu
% =\norm{T_{\sqrt{1-\theta}}f}{2,\mu}^{\,2}
% \leq\norm{f}{2-\theta,\mu}^{\,2}.
% \label{eq:noise-inner-product}
%\end{equation}

\section{An increasing core obtained by random revelations}
\label{sec:core}

\subsection{Construction and density of the core}

Set $\delta=\frac{\mu(A)}{8}.$ Given a fixed permutation $\sigma = (\sigma(1), \ldots, \sigma(n)),$ define
\begin{equation}
 a_k^\sigma(x)
 =
 \mu\bigl(
     A
     \mid
     U_{\sigma(1)}=x_{\sigma(1)},\ldots,
     U_{\sigma(k)}=x_{\sigma(k)}
 \bigr),
 \qquad x\in\Omega_n,
 \label{eq:reveal-density}
\end{equation}
where $U\sim\mu$. In particular, $a_0^\sigma(x)=\mu(A),$ $a_n^\sigma(x)=\one_A(x).$ Taking probability over a uniformly random permutation $\sigma$, define

$$
 b(x)
 =
 \Prob_\sigma\left(
     \min_{0\le k\le n}a_k^\sigma(x)<\delta
 \right),
 \quad \text{and} \quad
 C=\{x\in\Omega_n:b(x)\le1/4\}.
$$

\begin{lemma}
\label{lem:core-mass}
The core $C$ is increasing, satisfies $C\subseteq A$, and
\begin{equation}
 \mu(C)\ge\frac{\mu(A)}{2}>0.
 \label{eq:core-mass}
\end{equation}
\end{lemma}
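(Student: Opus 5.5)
Three claims need proof: $C$ is increasing, $C\subseteq A$, and $\mu(C)\ge\mu(A)/2$. I will handle them in that order, fixing a permutation $\sigma$ throughout and averaging over $\sigma$ only at the end.

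\emph{Monotonicity.} For fixed $\sigma$ and $k$, the function $x\mapsto a_k^\sigma(x)$ is increasing. The reason is that $A$ is increasing, so raising the revealed coordinates $x_{\sigma(1)},\dots,x_{\sigma(k)}$ can only increase the conditional probability of $A$. Formally, couple the remaining coordinates of $U$ identically for the two conditionings: if $x\le y$, then $(x_{\sigma([k])},U_{\text{rest}})\le(y_{\sigma([k])},U_{\text{rest}})$ pointwise. Hence $\min_k a_k^\sigma$ is increasing in $x$. It follows that the event $\{\min_k a_k^\sigma(x)<\delta\}$ is decreasing in $x$, so $b$ is decreasing and $C=\{b\le1/4\}$ is increasing.

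\emph{Containment.} If $x\notin A$, then $a_n^\sigma(x)=\one_A(x)=0<\delta$ for every $\sigma$. So $b(x)=1>1/4$ and $x\notin C$.

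\emph{Mass bound.} For a fixed $\sigma$ and $U\sim\mu$, the sequence $M_k=a_k^\sigma(U)$ is the Doob martingale of $\one_A(U)$ along the filtration revealing $U_{\sigma(1)},U_{\sigma(2)},\dots$. Let $\tau=\min\{k:M_k<\delta\}$. By optional stopping,
\[
\mu(A)=\E M_{n}\ \text{and}\ \E[M_n\one_{\tau\le n}]=\E[M_\tau\one_{\tau\le n}]<\delta,
\]
where the first part of the second identity holds because $\E[M_n\mid\mathcal F_\tau]=M_\tau$ on $\{\tau\le n\}$. Therefore
\[
\Prob\bigl(U\in A,\ \min_k a_k^\sigma(U)<\delta\bigr)<\delta=\mu(A)/8.
\]
Averaging over $\sigma$ and applying Fubini gives
\[
\E_\mu\bigl[\one_A(U)\,b(U)\bigr]<\mu(A)/8.
\]
By Markov's inequality, $\mu(\{x\in A: b(x)>1/4\})<4\cdot\mu(A)/8=\mu(A)/2$. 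Combined with $C\subseteq A$, this yields $\mu(C)=\mu(A)-\mu(A\setminus C)>\mu(A)/2$, which is in particular positive because $\mu(A)\ge 2^{-n}>0$.

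The only delicate step is the optional-stopping estimate. Its key point is that on the set where the martingale dips below $\delta$, the terminal value $\one_A$ has conditional expectation at most $\delta$. This holds because $\tau$ is a stopping time and $M_\tau<\delta$ there; once that is written correctly, the remaining steps are routine.
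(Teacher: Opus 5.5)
Your proof is correct and follows the paper's argument essentially step for step. The same coupling gives monotonicity of $a_k^\sigma$, the same observation $a_n^\sigma=\one_A$ gives $C\subseteq A$, and your optional-stopping step is exactly the paper's decomposition $\mu(A\cap\{\zeta\le n\})=\sum_k\E_\mu[\one_{\{\zeta=k\}}a_k^\sigma(U)]\le\delta$, followed by averaging over $\sigma$ and Markov's inequality. Your strict inequalities are also valid, since $\delta>0$ and the space is finite, though only the non-strict version is needed.
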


\begin{proof}
Fix $\sigma$ and $k$. Suppose that $x\le y$. Couple the unrevealed
coordinates in the two conditional distributions in
\eqref{eq:reveal-density} to be identical. The resulting
configuration with revealed coordinates taken from $x$ is then
coordinatewise no larger than the corresponding configuration with
revealed coordinates taken from $y$. Since $A$ is increasing, $a_k^\sigma(x)\le a_k^\sigma(y).$ Hence
\[
 \left\{
     \min_{0\le k\le n}a_k^\sigma(y)<\delta
 \right\}
 \subseteq
 \left\{
     \min_{0\le k\le n}a_k^\sigma(x)<\delta
 \right\},
\]
and therefore $b(y)\le b(x)$. Thus $b$ is decreasing and $C$ is
increasing.

If $x\notin A$, then $a_n^\sigma(x)=\one_A(x)=0<\delta$ for every $\sigma$. Hence $b(x)=1$, so $x\notin C$. Therefore
$C\subseteq A$.

It remains to estimate the mass of $C$. Fix $\sigma$, and let
$\mathcal F_k$ be the $\sigma$-field generated by
$U_{\sigma(1)},\ldots,U_{\sigma(k)}$. By definition,
\[
 a_k^\sigma(U)
 =
 \E_\mu\bigl[\one_A(U)\mid\mathcal F_k\bigr].
\]
Define
\[
 \zeta
 =
 \inf\bigl\{
     k\in\{0,\ldots,n\}:a_k^\sigma(U)<\delta
 \bigr\},
\]
with the convention $\inf\varnothing=\infty$. Since
$\{\zeta=k\}\in\mathcal F_k$, we have %the defining property of conditional
%expectation gives
\begin{align*}
 \mu(A\cap\{\zeta\le n\})
 &=
 \sum_{k=0}^n
 \E_\mu\bigl[
     \one_{\{\zeta=k\}}\one_A(U)
 \bigr]\\
 &=
 \sum_{k=0}^n
 \E_\mu\bigl[
     \one_{\{\zeta=k\}}a_k^\sigma(U)
 \bigr]\\
 &\le
 \delta
 \sum_{k=0}^n
 \mu(\{\zeta=k\})
 \le
 \delta.
\end{align*}
All expectations in the preceding display are taken with respect to
the original measure $\mu$. Averaging over the uniform permutation
$\sigma$ yields
\[
 \int_A b(x)\,d\mu(x)\le\delta.
\]

On $A\setminus C$ one has $b(x)>1/4$. Hence
\[
 \mu(A\setminus C)
 \le
 4\int_A b(x)\,d\mu(x)
 \le
 4\delta
 =
 \frac{\mu(A)}{2}.
\]
Since $C\subseteq A$,
\[
 \mu(C)
 =
 \mu(A)-\mu(A\setminus C)
 \ge
 \frac{\mu(A)}{2}.
\]
\end{proof}

The core in Lemma~\ref{lem:core-mass} has good random section properties at every noise scale.

\begin{lemma}
\label{lem:good-sections}
Fix $\theta\in[0,1]$, and let $R\subseteq[n]$ include each coordinate
independently with probability $\theta$. For $x\in\Omega_n$, Then, for every $x\in C$,
\begin{equation}
 \Prob_R\bigl(\mu\bigl(A\mid U_{R^c}=x_{R^c}\bigr) \ge\delta\bigr)\ge\frac34.
 \label{eq:good-sections}
\end{equation}

\end{lemma}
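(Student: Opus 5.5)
The plan is to couple the random set $R$ with a uniformly random permutation $\sigma$, so that the frozen set $R^c$ becomes an initial segment of $\sigma$. Then the event in \eqref{eq:good-sections} becomes the event that a single revealed density $a_k^\sigma(x)$ is at least $\delta$, and the defining property $b(x)\le 1/4$ of the core $C$ can be applied directly.

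\textbf{Coupling.} Let $\sigma$ be a uniform random permutation of $[n]$. Independently of $\sigma$, let $N\sim\mathrm{Bin}(n,1-\theta)$. Set $S=\{\sigma(1),\dots,\sigma(N)\}$ and $R=[n]\setminus S$.

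\textbf{Law of $R$.} Conditional on $N=k$, the set $S$ is a uniformly random $k$-subset of $[n]$, by exchangeability of $\sigma$. The set $S'=\{i:\,i\notin R\}$ under the original model also has $|S'|\sim\mathrm{Bin}(n,1-\theta)$ and is uniform given its size. Hence $S$ has the law of $R^c$ for the Bernoulli($\theta$) random set. Equivalently, $R$ includes each coordinate independently with probability $\theta$, as required.

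\textbf{Reduction to the core condition.} On this coupling, for every $x\in\Omega_n$,
\[
 \mu\bigl(A\mid U_{R^c}=x_{R^c}\bigr)=a_N^\sigma(x),
\]
directly from \eqref{eq:reveal-density}, since conditioning on the coordinates in $S$ is the same as conditioning on the first $N$ revealed coordinates. Therefore
\[
 \Prob_R\bigl(\mu(A\mid U_{R^c}=x_{R^c})<\delta\bigr)
 =\Prob_{\sigma,N}\bigl(a_N^\sigma(x)<\delta\bigr)
 \le\Prob_\sigma\Bigl(\min_{0\le k\le n}a_k^\sigma(x)<\delta\Bigr)=b(x).
\]
The inequality holds for each fixed value of $N$. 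For $x\in C$ we have $b(x)\le 1/4$, and taking complements gives \eqref{eq:good-sections}.

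\textbf{Edge cases.} When $\theta=0$ we have $N=n$, $R=\varnothing$, and the section is $a_n^\sigma(x)=\one_A(x)$. When $\theta=1$ we have $N=0$ and the section density is $a_0^\sigma(x)=\mu(A)\ge\delta$. The argument covers both without modification, so the statement holds on the whole range $\theta\in[0,1]$.

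\textbf{Main obstacle.} The only delicate point is the distributional identity between the Bernoulli($\theta$) set $R^c$ and an initial segment of a uniform permutation of independent binomial length. Once that identity is checked, the result follows because the core was defined through the minimum over all $k$, so it controls every noise level simultaneously.
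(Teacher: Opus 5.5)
Your proposal is correct and follows the paper's proof: couple $R^c$ with the initial segment $\{\sigma(1),\ldots,\sigma(J)\}$ of a uniform permutation, with independent length $J\sim\mathrm{Bin}(n,1-\theta)$, identify the section density with $a_J^\sigma(x)$, and bound the failure probability by $b(x)\le 1/4$. Your exchangeability argument for the law of $R$ is equivalent to the paper's direct computation $\Prob(R^c=S)=\Prob(J=|S|)/\binom{n}{|S|}=(1-\theta)^{|S|}\theta^{n-|S|}$.
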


\begin{proof}
Independently choose a uniform permutation $\sigma$ and a random integer $J\sim\mathrm{Bin}(n,1-\theta)$,
%Let $\sigma$ be a uniform permutation of $[n]$ and, independently,
%let
%\[
% J\sim\operatorname{Bin}(n,1-\theta).
%\]
Set $R^c=\{\sigma(1),\ldots,\sigma(J)\}.$ For each fixed subset $S\subseteq[n]$ of size $k$,
\[
 \Prob(R^c=S)
 =
 \frac{\Prob(J=k)}{\binom nk}
 =
 (1-\theta)^k\theta^{n-k},
\]
with the usual endpoint interpretation. Thus $R^c$ has the
law of the complement of the independently selected set $R$. Under this coupling,
\[
 \mu\bigl(A\mid U_{R^c}=x_{R^c}\bigr)=a_J^\sigma(x).
\]
Hence, for every $x\in C$,
\begin{align*}
 \Prob_R\bigl(\mu\bigl(A\mid U_{R^c}=x_{R^c}\bigr)<\delta\bigr)
 &=
 \Prob_{\sigma,J}\bigl(a_J^\sigma(x)<\delta\bigr)\\
 &\le
 \Prob_\sigma\left(
     \min_{0\le k\le n}a_k^\sigma(x)<\delta
 \right)\\
 &=b(x)
 \le\frac14.
\end{align*}
Therefore
\[
 \Prob_R\bigl(\mu\bigl(A\mid U_{R^c}=x_{R^c}\bigr)\ge\delta\bigr)\ge\frac34.
\]
%The estimate is pointwise in $x$; no simultaneous good-section event
%over all $x\in C$ is required.
\end{proof}

\subsection{Averaging section energies and conditional means}

For fixed $R$, averaging over $z\in\{0,1\}^{R^c}$ is always with
respect to the uniform measure, so each $z$ has weight
$2^{-|R^c|}$.

\begin{lemma}
\label{lem:energy-averaging}
Let $R\subseteq[n]$ be chosen as in
Lemma~\ref{lem:good-sections}. Then, for every
$f:A\to\mathbb R$,
\begin{equation}
 \E_R\left[
   \sum_{z\in\{0,1\}^{R^c}}2^{-|R^c|}
             \mu_{R,z}(A)\cE_{R,z}(f)\right]
 =\mu(A)\theta\cE_A(f),
 \label{eq:energy-average}
\end{equation}
where $\mu_{R,z}(A)=\mu(A \mid U_{R^c} = z)$.
\end{lemma}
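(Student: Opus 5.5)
The identity is a direct computation: unfold the definitions, cancel the normalizations, and exchange the order of summation so that each edge of the censored graph is counted with the probability that its coordinate is free.

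\textbf{Step 1 (cancel the normalizations).} For a nonempty section, $\pi_{R,z}(x)=\mu_{R,z}(\{x\})/\mu_{R,z}(A)=2^{-|R|}/\mu_{R,z}(A)$ for $x\in A$ with $x_{R^c}=z$. Hence the factor $\mu_{R,z}(A)$ in \eqref{eq:energy-average} cancels the denominator of $\pi_{R,z}$. This gives
\[
 2^{-|R^c|}\mu_{R,z}(A)\cE_{R,z}(f)
 =\frac{1}{4n}\sum_{\substack{x\in A\\x_{R^c}=z}}2^{-n}
 \sum_{\substack{i\in R\\x^{\oplus i}\in A}}(f(x)-f(x^{\oplus i}))^2 .
\]
For an empty section both sides vanish, consistent with the convention $\cE_{R,z}(f)=0$.

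\textbf{Step 2 (sum over $z$).} Summing over $z\in\{0,1\}^{R^c}$ removes the constraint $x_{R^c}=z$, since every $x\in A$ lies in exactly one section. For fixed $R$ the inner quantity therefore equals
\[
 \frac{1}{4n}\sum_{x\in A}2^{-n}\sum_{i\in[n]}\one_{\{i\in R\}}\one_{\{x^{\oplus i}\in A\}}(f(x)-f(x^{\oplus i}))^2 .
\]

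\textbf{Step 3 (average over $R$).} Taking $\E_R$ and using $\Prob(i\in R)=\theta$ for each $i$, by linearity we obtain $\theta$ times
\[
 \frac1{4n}\sum_{x\in A}2^{-n}\sum_{i:\,x^{\oplus i}\in A}(f(x)-f(x^{\oplus i}))^2 .
\]
Finally, $2^{-n}=\mu(A)\pi(x)$ for $x\in A$, so by \eqref{eq:dirichlet} this expression equals $\mu(A)\cE_A(f)$. Together with the factor $\theta$ from Step 3, this is exactly \eqref{eq:energy-average}.

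There is no real obstacle here. The only points needing care are the correct normalization of $\pi_{R,z}$ in Step 1 and the fact that $\cE_{R,z}$ carries the original denominator $4n$ rather than $4|R|$. That choice is precisely what makes Step 3 produce $\theta\cE_A(f)$ with no extra factors.
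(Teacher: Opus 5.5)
Your proposal is correct and follows essentially the same route as the paper. Both cancel the normalization via $\mu_{R,z}(A)\pi_{R,z}=\mu_{R,z}|_A$, giving weight $2^{-n}$ per point. Both then sum over $z$ to obtain $\frac{1}{4n}\int_A\sum_{i\in R,\,x^{\oplus i}\in A}(f(x)-f(x^{\oplus i}))^2\,d\mu$, and average over $R$ using $\Prob(i\in R)=\theta$ before identifying the result with $\mu(A)\cE_A(f)$ through \eqref{eq:mu-pi}.
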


\begin{proof}
For every nonempty section,
\[
 \mu_{R,z}(A)\,\pi_{R,z}
 =
 \left.\mu_{R,z}\right|_A.
\]

Thus, for fixed $R$,
\begin{align*}
 \sum_{z\in\{0,1\}^{R^c}} 2^{-|R^c|}\mu_{R,z}(A)\cE_{R,z}(f)=\frac1{4n}\int_A
     \sum_{\substack{i\in R\\x^{\oplus i}\in A}}
           (f(x)-f(x^{\oplus i}))^2\dd\mu(x).
\end{align*}
Empty sections contribute zero by convention.

Since each coordinate $i$ belongs to $R$ with probability $\theta$, $\E_R\one_{\{i\in R\}}=\theta.$ Averaging over $R$ therefore yields

\[
 \frac\theta{4n}\int_A
     \sum_{\substack{i\in[n]\\x^{\oplus i}\in A}}
           (f(x)-f(x^{\oplus i}))^2\dd\mu(x)
 =\mu(A)\theta\cE_A(f),
\]
by \eqref{eq:mu-pi} and \eqref{eq:dirichlet}.
\end{proof}

We next express the averaged squared section means in terms
of the cube noise operator.

\begin{lemma}
\label{lem:mean-averaging}
For any real-valued $h$ on $\cube$ and $\theta\in[0,1]$,
\begin{align}
 \E_R\sum_z2^{-|R^c|}
          \bigl(\mu_{R,z}(h)\bigr)^2=\norm{T_{\sqrt{1-\theta}}h}{2,\mu}^2
 \le\norm{h}{2-\theta,\mu}^2.
 \label{eq:mean-average}
\end{align}
\end{lemma}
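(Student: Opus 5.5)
The plan is to rewrite the left-hand side as a quadratic form $\langle h,T_{1-\theta}h\rangle_\mu$, then use the semigroup property and self-adjointness of the noise operators, and finally invoke \eqref{eq:hypercontractivity}.

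\textbf{Step 1: rewrite via two independent samples.} Fix $R$. Then
\[
 \sum_z2^{-|R^c|}\bigl(\mu_{R,z}(h)\bigr)^2=\E[h(U)h(V)],
\]
where $U\sim\mu$ and $V$ agrees with $U$ on $R^c$ and has fresh independent fair bits on $R$. Indeed, the outer sum averages over $z=U_{R^c}$, and $\mu_{R,z}(h)^2$ is the expectation of $h(u,z)h(v,z)$ over independent uniform $u,v\in\{0,1\}^R$.

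\textbf{Step 2: average over $R$.} Each coordinate lies in $R$ independently with probability $\theta$. So, jointly over $R$ and the bits, $V$ is obtained from $U$ by independently keeping each coordinate with probability $1-\theta$ and otherwise resampling it. This is exactly the definition of $T_{1-\theta}$, so the averaged left-hand side equals $\langle h,T_{1-\theta}h\rangle_\mu$.

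\textbf{Step 3: factor the noise operator.} I will use the semigroup identity $T_\rho T_\sigma=T_{\rho\sigma}$. This holds because composing two independent keep-or-resample steps keeps a coordinate with probability $\rho\sigma$; a resample after a resample is still a fresh bit. I will also use that $T_\rho$ is self-adjoint on $L^2(\mu)$, since the pair $(U,V)$ is exchangeable. Together these give
\[
 \langle h,T_{1-\theta}h\rangle_\mu=\langle T_{\sqrt{1-\theta}}h,T_{\sqrt{1-\theta}}h\rangle_\mu=\norm{T_{\sqrt{1-\theta}}h}{2,\mu}^2,
\]
which is the claimed identity.

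\textbf{Step 4: apply hypercontractivity.} Apply \eqref{eq:hypercontractivity} with $\rho=\sqrt{1-\theta}$, so that $1+\rho^2=2-\theta$. This yields the inequality. The endpoints $\theta=0$ and $\theta=1$ are consistent: for $\theta=0$ both sides equal $\norm{h}{2,\mu}^2$, and for $\theta=1$ the left side is $(\mu h)^2\le\norm{h}{1,\mu}^2$.

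I expect no real obstacle. The only point needing care is the bookkeeping in Step 2: one must check that the randomness of $R$, combined with the fresh bits on $R$, produces exactly the product keep-or-resample kernel, with the weights $2^{-|R^c|}$ matching the uniform law of $U_{R^c}$.
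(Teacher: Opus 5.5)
Your proof is correct, but it takes a different route from the paper's. The paper argues spectrally. It identifies $\sum_z 2^{-|R^c|}\mu_{R,z}(h)^2$ with $\|\E_\mu[h\mid U_{R^c}]\|_{2,\mu}^2$ and expands $h$ in the Walsh basis. Conditioning on $U_{R^c}$ kills every $\chi_S$ with $S\cap R\neq\varnothing$. Parseval together with $\Prob_R(S\subseteq R^c)=(1-\theta)^{|S|}$ and $T_\rho\chi_S=\rho^{|S|}\chi_S$ then matches both sides as $\sum_S(1-\theta)^{|S|}\widehat h(S)^2$.

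You instead write the section average as a two-sample correlation $\E[h(U)h(V)]$. Randomizing $R$ turns the pair $(U,V)$ into exactly the $T_{1-\theta}$ noise coupling. You then split $T_{1-\theta}=T_{\sqrt{1-\theta}}^{\,2}$ using the semigroup law and self-adjointness, and you justify both directly by coupling (composing keep-or-resample steps, and exchangeability of $(U,V)$).

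Your argument avoids building the character basis. It also makes transparent why a random section at rate $\theta$ is the same object as the noise operator at correlation $1-\theta$. This is the viewpoint the appendix adopts for the censored kernel $R_\theta$, where the same self-adjointness/semigroup step appears. The Fourier route, in exchange, gives the explicit formula $\sum_S(1-\theta)^{|S|}\widehat h(S)^2$. From it, self-adjointness, the semigroup property and nonnegativity come for free rather than needing separate justification.

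Both proofs finish with the same application of \eqref{eq:hypercontractivity} at $\rho=\sqrt{1-\theta}$, and your endpoint checks at $\theta\in\{0,1\}$ are correct.
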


\begin{proof}

For fixed $R$,
\[
 \E_\mu[h\mid U_{R^c}](x)
 =
 \mu_{R,x_{R^c}}(h).
\]
Since $U_{R^c}$ is uniform on $\{0,1\}^{R^c}$,
\begin{equation}
 \left\|\E_\mu[h\mid U_{R^c}]\right\|_{2,\mu}^2=\sum_{z \in \{0,1\}^{R^c}} \mu(U_{R^c} = z) \left( \mu_{R,z}(h) \right)^2
 =
 \sum_{z\in\{0,1\}^{R^c}}
 2^{-|R^c|}
 \bigl(\mu_{R,z}(h)\bigr)^2.
 \label{eq:section-mean-conditional}
\end{equation}

For $S\subseteq[n]$, by the Walsh character
$\chi_S(x)=\prod_{i\in S}(-1)^{x_i}$, with $\chi_\varnothing=1$. For \( S, T \subseteq [n] \),
\[
\chi_S(x)\chi_T(x) = \chi_{S \triangle T}(x).
\]
Since each coordinate is an independent fair bit,
\[
\int \chi_S\chi_T \, d\mu= \mathbf{1}_{\{S=T\}} =
\begin{cases}
1, & S = T,\\
0, & S \neq T.
\end{cases}.
\]

Thus \(\{\chi_S : S \subseteq [n]\}\) is an orthonormal family. Moreover, there are \(2^n\) characters in total, and $\dim\{h : \Omega_n \to \mathbb{R}\} = 2^n,$ so they indeed form an orthonormal basis.

Hence, $h = \sum_{S \subseteq [n]} \hat{h}(S)\chi_S$ and $\hat{h}(S) = \int h\chi_S \, d\mu.$ For every $S\subseteq[n]$,
\[
 \E_\mu[\chi_S\mid U_{R^c}]
 =
 \begin{cases}
  \chi_S, & S\subseteq R^c,\\
  0, & S\cap R\neq\varnothing.
 \end{cases}
\]
Therefore
\[
 \E_\mu[h\mid U_{R^c}]
 =
 \sum_{S\subseteq R^c}
 \widehat h(S)\chi_S,
\]
and Parseval's identity gives
\[
 \left\|\E_\mu[h\mid U_{R^c}]\right\|_{2,\mu}^2
 =
 \sum_{S\subseteq R^c}\widehat h(S)^2.
\]

Averaging over $R$ and using $\Prob_R(S\subseteq R^c)
 =
 (1-\theta)^{|S|}$ yields
\[
 \E_R
 \left\|\E_\mu[h\mid U_{R^c}]\right\|_{2,\mu}^2
 =
 \sum_{S\subseteq[n]}
 (1-\theta)^{|S|}
 \widehat h(S)^2.
\]

Since $T_\rho\chi_S=\rho^{|S|}\chi_S,$ Parseval gives
\[
 \|T_\rho h\|_{2,\mu}^2
 =
 \sum_{S\subseteq[n]}
 \rho^{2|S|}
 \widehat h(S)^2.
\]
Taking $\rho=\sqrt{1-\theta}$ and using
\eqref{eq:section-mean-conditional} proves the equality in
\eqref{eq:mean-average}.

Finally, for $\rho=\sqrt{1-\theta}$,
hypercontractivity \eqref{eq:hypercontractivity} gives
\[
 \|T_\rho h\|_{2,\mu}
 \le
 \|h\|_{1+\rho^2,\mu}
 =
 \|h\|_{2-\theta,\mu}.
\] 
When $\theta=1$, the same conclusion follows directly from $|\mu(h)|\le\mu(|h|).$
\end{proof}

The \(L^2(\mu)\)-mass of \(f\) on the large core \(C\) can be bounded by two terms: one is the global Dirichlet energy of \(f\) on \(A\), and the other is a lower order \(L^{2-\theta}\)-norm of its zero extension.

\begin{proposition}
\label{prop:core-functional}
For every $f:A\to\R$ and every $0<\theta\le1$,
\begin{equation}
 \frac34\int_C f^2\,d\mu
 \le
 \frac{2\mu(A)n\theta}{\delta}\cE_A(f)
 +
 \frac1\delta
 \|\widetilde f\|_{2-\theta,\mu}^{\,2}.
 \label{eq:core-functional}
\end{equation}
\end{proposition}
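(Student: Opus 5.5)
The plan is to decompose the second moment of $f$ on each coordinate section into a variance part and a mean part. The variance part is controlled by the section Poincar\'e inequality (Lemma~\ref{lem:section-poincare}) and the mean part by hypercontractivity (Lemma~\ref{lem:mean-averaging}). Lemma~\ref{lem:good-sections} ensures that, from the viewpoint of any point of $C$, most random sections are good. We restrict to good sections throughout so that the factor $1/\mu_{R,z}(A)$ is at most $1/\delta$.

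\emph{Step 1 (pointwise in sections).} Fix $R$ and $z\in\{0,1\}^{R^c}$, and call $(R,z)$ \emph{good} if $\mu_{R,z}(A)\ge\delta$. For a good (hence nonempty) section, use $\mu_{R,z}(A)\pi_{R,z}=\mu_{R,z}|_A$ to write
\[
 \mu_{R,z}(\widetilde f^{\,2})
 =\mu_{R,z}(A)\Var_{\pi_{R,z}}(f)+\frac{\mu_{R,z}(\widetilde f)^2}{\mu_{R,z}(A)} .
\]
Lemma~\ref{lem:section-poincare} bounds the first term by $2n\,\cE_{R,z}(f)$. Since $\mu_{R,z}(A)\ge\delta$, this is at most $\frac{2n}{\delta}\mu_{R,z}(A)\cE_{R,z}(f)$. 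The second term is at most $\frac1\delta\mu_{R,z}(\widetilde f)^2$. Hence, for good $(R,z)$,
\[
 \mu_{R,z}(\widetilde f^{\,2}\one_C)\le \frac{2n}{\delta}\mu_{R,z}(A)\cE_{R,z}(f)+\frac1\delta\bigl(\mu_{R,z}(\widetilde f)\bigr)^2 .
\]

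\emph{Step 2 (lower bound via good sections).} Goodness of the section through $x$ depends only on $(R,x_{R^c})$. By Fubini over $R$ and over $x\sim\mu$, written as $z=x_{R^c}$ uniform followed by the free coordinates uniform, we get
\[
 \int_C f^2(x)\,\Prob_R\bigl((R,x_{R^c})\text{ good}\bigr)\,d\mu(x)
 =\E_R\sum_z 2^{-|R^c|}\one_{\{(R,z)\text{ good}\}}\,\mu_{R,z}(\widetilde f^{\,2}\one_C).
\]
By Lemma~\ref{lem:good-sections}, the probability on the left is at least $3/4$ for every $x\in C$. So the left side is at least $\frac34\int_C f^2\,d\mu$.

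\emph{Step 3 (summing).} Insert the Step~1 bound on the right side of Step~2. Both bounding terms are nonnegative, so we may drop the indicator of goodness and sum over all sections; empty sections contribute zero energy by convention. Lemma~\ref{lem:energy-averaging} turns the energy sum into $\frac{2n}{\delta}\mu(A)\theta\cE_A(f)$. Lemma~\ref{lem:mean-averaging} with $h=\widetilde f$ turns the mean sum into at most $\frac1\delta\|\widetilde f\|_{2-\theta,\mu}^2$. Together these give exactly \eqref{eq:core-functional}.

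The only delicate point is Step~2. One must check that the goodness event is measurable with respect to $(R,x_{R^c})$ alone, so that the pointwise probability from Lemma~\ref{lem:good-sections} can be exchanged with the section average. One must also restrict to good sections \emph{before} applying the Poincar\'e bound, so that the section density enters only through the lower bound $\delta$. Everything else is the bookkeeping of the two averaging lemmas.
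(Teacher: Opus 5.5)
Your proposal is correct and follows the paper's proof essentially step for step. It uses the same variance/mean decomposition on good sections, the same Fubini exchange combined with Lemma~\ref{lem:good-sections}, and the same final use of Lemmas~\ref{lem:energy-averaging} and~\ref{lem:mean-averaging} after dropping the goodness indicator. The only cosmetic difference is that you insert the indicator of $C$ inside the section averages before exchanging, whereas the paper integrates $f^2$ over all of $A$ and restricts to $C$ afterwards.
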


\begin{proof}

Fix a section $(R,z)$ such that $\mu_{R,z}(A)\ge\delta.$ Since $\widetilde f$ vanishes outside $A$, $\mu_{R,z}(\widetilde f)
 =
 \mu_{R,z}(A)\pi_{R,z}(f),$ and $\mu_{R,z}(\widetilde f^2)
 =
 \mu_{R,z}(A)\pi_{R,z}(f^2).$ Hence
\begin{align*}
 \mu_{R,z}(\widetilde f^2)
 &=
 \mu_{R,z}(A)\Var_{\pi_{R,z}}(f)
 +
 \frac{\mu_{R,z}(\widetilde f)^2}{\mu_{R,z}(A)}\\
 &\le
 2n\cE_{R,z}(f)
 +
 \frac{\mu_{R,z}(\widetilde f)^2}{\mu_{R,z}(A)}\\
 &\le
 \frac{2n}{\delta}
 \mu_{R,z}(A)\cE_{R,z}(f)
 +
 \frac1\delta
 \mu_{R,z}(\widetilde f)^2.
\end{align*}

The first inequality follows from
Lemma~\ref{lem:section-poincare}, and the second from
$\mu_{R,z}(A)\ge\delta$.

Averaging the left-hand side over good sections gives
\begin{align*}
 &\E_R
 \sum_{z\in\{0,1\}^{R^c}}
 2^{-|R^c|}
 \one_{\{\mu_{R,z}(A)\ge\delta\}}
 \mu_{R,z}(\widetilde f^2)\\
 &\qquad=
 \int_A
 f(x)^2
 \Prob_R\left(
     \mu_{R,x_{R^c}}(A)\ge\delta
 \right)
 \,d\mu(x)\\
 &\qquad\ge
 \frac34\int_C f(x)^2\,d\mu(x),
\end{align*}
where the last inequality follows from
Lemma~\ref{lem:good-sections}. Multiplying the section estimate by
$\one_{\{\mu_{R,z}(A)\ge\delta\}}$ and averaging over $z$ and $R$ gives
\begin{align*}
 &\E_R
 \sum_{z\in\{0,1\}^{R^c}}
 2^{-|R^c|}
 \one_{\{\mu_{R,z}(A)\ge\delta\}}
 \mu_{R,z}(\widetilde f^2)\\
 &\qquad\le
 \frac{2n}{\delta}
 \E_R
 \sum_{z\in\{0,1\}^{R^c}}
 2^{-|R^c|}
 \one_{\{\mu_{R,z}(A)\ge\delta\}}
 \mu_{R,z}(A)\cE_{R,z}(f)\\
 &\qquad\quad+
 \frac1\delta
 \E_R
 \sum_{z\in\{0,1\}^{R^c}}
 2^{-|R^c|}
 \one_{\{\mu_{R,z}(A)\ge\delta\}}
 \mu_{R,z}(\widetilde f)^2.
\end{align*}
Since the two quantities
$\mu_{R,z}(A)\cE_{R,z}(f)$ and
$\mu_{R,z}(\widetilde f)^2$ are nonnegative, we may drop the
indicators on the right-hand side. Therefore
\begin{align*}
 \frac34\int_C f^2\,d\mu
 &\le
 \frac{2n}{\delta}
 \E_R
 \sum_{z\in\{0,1\}^{R^c}}
 2^{-|R^c|}
 \mu_{R,z}(A)\cE_{R,z}(f)\\
 &\qquad+
 \frac1\delta
 \E_R
 \sum_{z\in\{0,1\}^{R^c}}
 2^{-|R^c|}
 \mu_{R,z}(\widetilde f)^2\\
 &\le
 \frac{2\mu(A)n\theta}{\delta}\cE_A(f)
 +
 \frac1\delta
 \|\widetilde f\|_{2-\theta,\mu}^{\,2},
\end{align*}
by Lemmas~\ref{lem:energy-averaging}
and~\ref{lem:mean-averaging}.

\end{proof}

We first bound $\norm{\widetilde f}{2-\theta,\mu}^{\,2}$.
\begin{lemma}
\label{lem:support-factor}
Suppose $f:A\to\R$ and
$\mu(\supp\widetilde f)\le v\le1$. For $0<\theta\le1$,
\begin{equation}
 \norm{\widetilde f}{2-\theta,\mu}^{\,2}
 \le v^{\theta/(2-\theta)}\int_A f^2\dd\mu.
 \label{eq:support-factor}
\end{equation}
\end{lemma}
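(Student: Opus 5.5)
This is Hölder's inequality applied on the support of $\widetilde f$. Write $p=2-\theta\in[1,2)$ and $S=\supp\widetilde f$, so that $\mu(S)\le v$. Since $\widetilde f$ vanishes off $S\subseteq A$, we have
\[
 \int_{\cube}|\widetilde f|^{p}\dd\mu=\int_S|f|^{p}\dd\mu .
\]
We apply Hölder with the conjugate exponents $2/p$ and $2/(2-p)$ to the product $|f|^p\cdot\one_S$. This gives
\[
 \int_S|f|^{p}\dd\mu\le\Bigl(\int_S f^2\dd\mu\Bigr)^{p/2}\mu(S)^{(2-p)/2}.
\]

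Next we raise both sides to the power $2/p$:
\[
 \norm{\widetilde f}{p,\mu}^{\,2}\le \mu(S)^{(2-p)/p}\int_S f^2\dd\mu .
\]
With $p=2-\theta$, the exponent is $(2-p)/p=\theta/(2-\theta)$, which is nonnegative. Therefore $\mu(S)^{\theta/(2-\theta)}\le v^{\theta/(2-\theta)}$. Finally, $\int_S f^2\dd\mu\le\int_A f^2\dd\mu$, and this gives \eqref{eq:support-factor}.

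One point needs care. The endpoint $\theta=1$ (so $p=1$) is just the Cauchy--Schwarz inequality $\bigl(\int_S|f|\dd\mu\bigr)^2\le\mu(S)\int_S f^2\dd\mu$, which is covered by the same formula. If $\mu(S)=0$, then $\widetilde f\equiv0$ and the inequality is trivial. Otherwise there is no real obstacle. The only thing to check is the exponent bookkeeping, namely that the Hölder exponent $(2-p)/2$ on the measure, after the power $2/p$, yields $\theta/(2-\theta)$.
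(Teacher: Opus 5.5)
Your proposal is correct and follows essentially the same route as the paper: Hölder's inequality with conjugate exponents $2/p$ and $2/(2-p)$ applied to $|\widetilde f|^p\one_{\supp\widetilde f}$, then raising to the power $2/p$ so the exponent on the support measure becomes $\theta/(2-\theta)$, and finally using $\mu(\supp\widetilde f)\le v$. Your separate handling of the trivial case $\widetilde f\equiv0$ and of the endpoint $p=1$ matches the paper's treatment.
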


\begin{proof}
If $f=0$ the \eqref{eq:support-factor} is immediate. Otherwise let $p=2-\theta$, so
$1\le p<2$. H\"older's inequality with conjugate exponents
$2/p$ and $2/(2-p)$ gives
\begin{align*}
\int |\tilde{f}|^p \, d\mu
&= \int |\tilde{f}|^p \mathbf{1}_{\operatorname{supp} \tilde{f}} \, d\mu \\
&\leq \left( \int (|\tilde{f}|^p)^{2/p} \, d\mu \right)^{p/2}
     \left( \int \mathbf{1}_{\operatorname{supp} \tilde{f}} \, d\mu \right)^{(2-p)/2} \\
&= \left( \int |\tilde{f}|^2 \, d\mu \right)^{p/2}
     \mu(\operatorname{supp} \tilde{f})^{1-p/2}.
\end{align*}

Raising both sides to the power $2/p$ gives
\[
 \|\widetilde f\|_{p,\mu}^2
 \le
 \mu(\operatorname{supp}\widetilde f)^{(2-p)/p}
 \int |\widetilde f|^2\,d\mu.
\]
Since $\frac{2-p}{p}
 =
 \frac{\theta}{2-\theta}.$ Moreover, recall that $\widetilde f$ is the zero extension of $f$,
\[
 \int |\widetilde f|^2\,d\mu
 =
 \int_A f^2\,d\mu.
\]
Using
$\mu(\operatorname{supp}\widetilde f)\le v$
now proves \eqref{eq:support-factor}.

\end{proof}

\section{An occupation inequality at arbitrary stopping times}
\label{sec:occupation}

The following proposition uses only that $C\subseteq A$ is increasing,
it does not depend on the particular construction of $C$ in
Section~\ref{sec:core}. Thus, for any \(x \in A\) and any integrable stopping time \(T\), the expected number of visits to \(C\) by the chain before time \(T\) is at least $\mu(C)\,\mathbb{E}_x T$ up to a uniform error of \(O(n \log n)\). Define $d_n=n\sum_{k=1}^n\frac1k.$ Since
\[
 \sum_{k=1}^n \frac{1}{k} \leq 1 + \int_1^n \frac{du}{u} = 1 + \log n = \log(en),
\]
we have
\begin{equation}
 d_n\le n\log(en).
 \label{eq:coupon-upper}
\end{equation}
%
%The next lemma holds for every increasing subset $C\subseteq A$;
%it does not depend on the particular construction in
%Section~\ref{sec:core}. Write
%\begin{equation}
% d_n=n\sum_{k=1}^n\frac1k.
% \label{eq:coupon-constant}
%\end{equation}
%The integral bound
%$\sum_{k=1}^n1/k\le1+\int_1^n\dd u/u$ implies
%\begin{equation}
% d_n\le n\log(en).
% \label{eq:coupon-upper}
%\end{equation}

\begin{proposition}
\label{prop:occupation}
Let $C\subseteq A$ be increasing. For every $x\in A$ and every
stopping time $T$ with respect to the natural filtration of the chain
$P$ such that $\E_xT<\infty$,
\begin{equation}
 \mu(C)\E_xT
 \le
 \E_x\sum_{t=0}^{T-1}\one_C(X_t)+d_n.
 \label{eq:occupation}
\end{equation}
The sum is zero when $T=0$.
\end{proposition}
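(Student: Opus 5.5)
The plan is to build a bounded increasing potential $g$ on the full cube from the \emph{uncensored} dynamics, show that the censored chain drifts $g$ upward at least as fast as the uncensored one does, and then apply optional stopping at $T$.

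\textbf{Step 1: the potential.} Let $Q$ be the uncensored walk on $\cube$: choose a coordinate uniformly and replace it by a fresh fair bit. Thus $P$ is $Q$ with rejections. Define
\[
 g(y)=\sum_{t\ge0}\bigl(Q^t\one_C(y)-\mu(C)\bigr),\qquad y\in\cube .
\]
To see that $g$ is well defined and to control its size, let $\tau_{\mathrm{cov}}$ be the first time every coordinate has been selected at least once. At $\tau_{\mathrm{cov}}$ the state is exactly $\mu$-distributed, and the selection process is independent of the bit values. Hence by the strong Markov property the sum equals
\[
 g(y)=\E_y^Q\sum_{t<\tau_{\mathrm{cov}}}\bigl(\one_C(Y_t)-\mu(C)\bigr),
\]
since the tail after $\tau_{\mathrm{cov}}$ has mean zero. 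Convergence of the series holds because $|Q^t\one_C-\mu(C)|\le\Prob(\tau_{\mathrm{cov}}>t)$, which is summable. By coupon collecting, $\E\tau_{\mathrm{cov}}=n\sum_{k\le n}1/k=d_n$. Consequently
\[
 -\mu(C)d_n\le g\le(1-\mu(C))d_n,
\]
so $\max g-\min g\le d_n$. Moreover $g$ solves the Poisson equation $(Q-I)g=\mu(C)-\one_C$.

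\textbf{Step 2: monotonicity.} $Q^t$ preserves increasing functions. This follows by the obvious monotone coupling: use the same selected coordinate and the same fresh bit for both starting points. Since $C$ is increasing, each $Q^t\one_C$ is increasing, and therefore so is $g$.

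\textbf{Step 3: drift comparison.} For $x\in A$,
\[
 Qg(x)-Pg(x)=\frac1{2n}\sum_{i:\,x^{\oplus i}\notin A}\bigl(g(x^{\oplus i})-g(x)\bigr).
\]
Because $A$ is increasing and $x\in A$, a flip leaving $A$ must turn a $1$ into a $0$. Hence $x^{\oplus i}\le x$, and since $g$ is increasing each term is $\le0$. Therefore
\[
 Pg(x)-g(x)\ge Qg(x)-g(x)=\mu(C)-\one_C(x)\qquad\text{for all }x\in A.
\]

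\textbf{Step 4: optional stopping.} Set
\[
 M_t=g(X_t)-g(X_0)-\sum_{s<t}(Pg-g)(X_s).
\]
This is a martingale with increments bounded by $2\max|g|$, so optional stopping applies at $T$ because $\E_xT<\infty$. It gives
\[
 \E_x g(X_T)-g(x)=\E_x\sum_{s<T}(Pg-g)(X_s)\ge\mu(C)\E_xT-\E_x\sum_{s<T}\one_C(X_s).
\]
The left side is at most $\max g-\min g\le d_n$, which yields \eqref{eq:occupation}.

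I expect the main care point to be Step 1: identifying $g$ through the cover time, so that its oscillation is exactly bounded by $d_n$ rather than by a cruder mixing estimate. The remaining steps are routine.
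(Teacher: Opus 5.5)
Your proof is correct and follows essentially the same route as the paper. It uses the same potential $\sum_t(Q^t\one_C-\mu(C))$, its monotonicity via the common-update coupling, the drift comparison $Pg\ge Qg$ on $A$ (rejected moves are downward), and stopping at $T$. There are two minor differences. You bound the oscillation through the representation $g(y)=\E_y\sum_{t<\tau_{\mathrm{cov}}}(\one_C(Y_t)-\mu(C))$, where the paper couples $1^n$ with $0^n$; this representation is best justified termwise, since on $\{\tau_{\mathrm{cov}}\le t\}$ the state $Y_t$ is uniform given the selections. You also cite optional stopping for bounded-increment martingales, where the paper truncates at $T\wedge M$ and passes to the limit.
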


\begin{proof}
Let $K$ be the unconstrained single-coordinate fair-resampling kernel
on $\Omega_n$. Whenever two $K$-chains are coupled below, they use
the same coordinate choice and the same fair resampling bit at each
step.

\emph{A bounded increasing potential for the free chain.}
Let $T_{\mathrm{cov}}$ be the number of updates required until every
coordinate has been selected at least once. After $k$ distinct
coordinates have been selected, the probability that the next update
selects a new coordinate is $(n-k)/n$. Hence
\[
 \E T_{\mathrm{cov}}
 =
 \sum_{k=0}^{n-1}\frac{n}{n-k}
 =
 d_n.
\]
At time $T_{\mathrm{cov}}$, any two free chains using the common
updates have the same value in every coordinate, regardless of their
initial states. They then remain identical.

Couple a free chain started from $z\in\Omega_n$ with one whose initial
state has law $\mu$, independently of the updates. Since $\mu$ is
stationary for $K$,
\begin{align*}
|K^t \mathbf{1}_C(z) - \mu(C)|
&= \left| \mathbb{E}\left[ \mathbf{1}_C(Y_t^z) - \mathbf{1}_C(Y_t^\mu) \right] \right| \\
&\leq \mathbb{P}(Y_t^z \neq Y_t^\mu) \\
&\leq \mathbb{P}(T_{\text{cov}} > t).
\end{align*}

%\[
% \left|K^t\one_C(z)-\mu(C)\right|
% \le
% \Prob(T_{\mathrm{cov}}>t).
%\]
The tail-sum identity
\[
 \sum_{t\ge0}\Prob(T_{\mathrm{cov}}>t)
 =
 \E T_{\mathrm{cov}}
 =
 d_n
\]
therefore implies that $V(z)
 :=
 \sum_{t=0}^{\infty}
 \left(K^t\one_C(z)-\mu(C)\right)$ converges absolutely and uniformly.

The coupled free chains preserves coordinatewise order. Since
$C$ is increasing, $K^t\one_C$ is increasing for every $t$, and hence
so is $V$. Therefore
\[
 \max_{z\in\Omega_n}V(z)=V(1^n),
 \]
and\[
 \min_{z\in\Omega_n}V(z)=V(0^n).
\]
Coupling the free chains started from $0^n$ and $1^n$ gives, for every
$t\ge0$,
\[
 0
 \le
 K^t\one_C(1^n)-K^t\one_C(0^n)
 \le
 \Prob(T_{\mathrm{cov}}>t),
\]
because the two chains coincide once all coordinates have been
updated. Consequently,
\begin{equation}
\begin{split}
\max_z V(z) - \min_z V(z)
&= V(1^n) - V(0^n) \\
&= \sum_{t \geq 0} \left( K^t \mathbf{1}_C(1^n) - K^t \mathbf{1}_C(0^n) \right)\\
&\le\sum_{t\ge0}\Prob(T_{\mathrm{cov}}>t)=d_n.
\end{split}
\end{equation}

Applying $I-K$ to the finite partial sums in $V(z)$ gives
\begin{equation}
(I - K) \sum_{t=0}^{m} \left( K^t \one_C - \mu(C) \right) = \one_C - K^{m+1} \one_C.
\end{equation}

Since $K^{m+1}\one_C\to\mu(C)$ uniformly, letting $m\to\infty$ yields
\begin{equation}
 (I-K)V=\one_C-\mu(C).
 \label{eq:free-poisson}
\end{equation}

Now we compare the constrained and free drifts. For $z\in A$, a transition of the free chain from $z$ to
$y\notin A$ is replaced by a hold at $z$ in the constrained chain.
Hence
\[
 PV(z)-KV(z)
 =
 \sum_{y\notin A}
 K(z,y)\bigl(V(z)-V(y)\bigr).
\]
If $K(z,y)>0$ and $y\notin A$, then $y$ cannot be obtained from $z$
by changing a zero coordinate to one, since $A$ is increasing.
Thus $y\le z$. Since $V$ is increasing,
\[
 PV(z)-KV(z)\ge0.
\]
Together with \eqref{eq:free-poisson}, we have
\begin{equation}
 (P-I)V(z)
 \ge
 \mu(C)-\one_C(z),
 \qquad z\in A.
 \label{eq:constrained-drift}
\end{equation}

%\emph{Stopping the drift inequality.}
Fix $M\ge1$ and set $S=T\wedge M$. Let
$\mathcal F_t=\sigma(X_0,\ldots,X_t)$. Since
$\{t<S\}\in\mathcal F_t$, the Markov property gives
\begin{equation*}
\begin{split}
\mathbb{E}_x\left[\mathbf{1}_{\{t < S\}}\left(V(X_{t+1}) - V(X_t)\right)\right]
&= \mathbb{E}_x\left[\mathbf{1}_{\{t < S\}} \mathbb{E}_x\left[V(X_{t+1}) - V(X_t) \mid \mathcal{F}_t\right]\right] \\
&= \mathbb{E}_x\left[\mathbf{1}_{\{t < S\}}(P - I)V(X_t)\right].
\end{split}
\end{equation*}
%\[
% \E_x\!\left[
%   \one_{\{t<S\}}\bigl(V(X_{t+1})-V(X_t)\bigr)
% \right]
% =
% \E_x\!\left[
%   \one_{\{t<S\}}(P-I)V(X_t)
% \right].
%\]
Summing over $0\le t<M$ yields
\[
 \E_x[V(X_S)-V(x)]
 =
 \E_x\sum_{t=0}^{S-1}(P-I)V(X_t).
\]
Using \eqref{eq:constrained-drift},
\[
 \mu(C)\E_xS
 -
 \E_x\sum_{t=0}^{S-1}\one_C(X_t)\le \mathbb{E}_x \sum_{t=0}^{S-1} \bigl(\mu(C) - 1_C(X_t)\bigr)
 \le
 \E_x[V(X_S)-V(x)].
\]
Note that $V(X_S) - V(x) \leq \max_z V(z) - \min_z V(z) \leq d_n.$ Hence
\[
 \mu(C)\E_x(T\wedge M)
 -
 \E_x\sum_{t=0}^{T\wedge M-1}\one_C(X_t)
 \le d_n.
\]

Finally, as $M\to\infty$, $T\wedge M\uparrow T,$ and
\[
 \sum_{t=0}^{T\wedge M-1}\one_C(X_t)
 \uparrow
 \sum_{t=0}^{T-1}\one_C(X_t).
\]
Since the occupation sum is bounded above by $T$ and
$\E_xT<\infty$, monotone convergence gives
\eqref{eq:occupation}.
\end{proof}

\section{The killed Green function and its truncations}
\label{sec:green}
We return to the core $C=\{x\in\Omega_n:b(x)\le1/4\}.$ Fix an arbitrary set $B\subseteq A$ satisfying $\pi(B)\ge1/4$.
Since $\tau_B=0$ when the initial state lies in $B$, it suffices to
consider $x\in D:=A\setminus B$. For the remainder of this section,
fix such an $x$. %No monotonicity is assumed for either $B$ or $D$.

%Return to the core $C=\{x\in\Omega_n:b(x)\le1/4\}$. Fix an arbitrary
%set $B\subseteq A$ satisfying $\pi(B)\ge1/4$. If the initial state
%belongs to $B$, its hitting time is zero. We therefore fix
%$x\in D:=A\setminus B$ in the remainder of this section and write
%\begin{equation}
% H=\E_x\tau_B,\qquad
% U=\E_x\sum_{t=0}^{\tau_B-1}\one_C(X_t).
% \label{eq:HU}
%\end{equation}
%Neither $B$ nor $D$ is assumed increasing.

%\subsection{Finiteness and the killed resolvent}

We first establish finiteness without using any mixing estimate.
Choose $b_0\in B$. For any $z\in A$, consider the path that first
moves upward from $z$ to $1^n$ by flipping its zero coordinates, and
then moves downward from $1^n$ to $b_0$ by flipping the coordinates
where $b_0$ is zero. The first part stays in $A$ because $A$ is
increasing. Every point on the second part lies above $b_0$, so this
part also stays in $A$. The path has length at most $2n$, and every prescribed
edge transition along the path has probability $1/(2n)$ under $P$.
Hence, uniformly in $z\in A$,
\begin{equation}
 \Prob_z(\tau_B\le2n)
 \ge (2n)^{-2n}.
 \label{eq:crude-hit-probability}
\end{equation}

Applying the Markov property at times $2n,4n,\ldots$ gives
\begin{equation}
 \Prob_z(\tau_B>2nk)
 \le
 \left(1-(2n)^{-2n}\right)^k,
 \qquad k\in\Nzero.
 \label{eq:crude-hit-tail}
\end{equation}
Summing over blocks shows that $\E_z\tau_B<\infty$ for every $z\in A$. %In particular, the occupation time of $C$ before hitting $B$ also has finite expectation.

Let $P_D=(P(y,z))_{y,z\in D}.$ $P_D$ is a substochastic matrix. By induction on $t$ using the Markov
property,
\begin{equation}
 P_D^t(y,z)
 =
 \Prob_y(X_t=z,\ t<\tau_B),
 \qquad y,z\in D,
 \label{eq:killed-transition}
\end{equation}
for every $t\ge0$. Consequently,
\[
 \sum_{z\in D}P_D^t(y,z)
 =
 \Prob_y(t<\tau_B)
 \le
 \left(1-(2n)^{-2n}\right)^{\lfloor t/(2n)\rfloor}.
\]
It follows that the matrix series fo killed chain $\mathsf Z_D:=\sum_{t=0}^{\infty}P_D^t$
%\begin{equation}
% 
% \label{eq:fundamental-matrix}
%\end{equation}
converges, for example in the maximum-row-sum norm. For every $m\ge0$,
\[
 (I-P_D)\sum_{t=0}^{m}P_D^t
 =
 \sum_{t=0}^{m}P_D^t(I-P_D)
 =
 I-P_D^{m+1}.
\]
Since $P_D^m\to0$ and
$\sum_{t=0}^{\infty}P_D^t=\mathsf Z_D$ converges, letting
$m\to\infty$ gives
\[
 (I-P_D)\mathsf Z_D
 =
 \mathsf Z_D(I-P_D)
 =
 I.
\]
Hence
\begin{equation}
 \mathsf Z_D=(I-P_D)^{-1}.
 \label{eq:resolvent}
\end{equation}

Define the Green density with respect to $\pi$ by
\begin{equation}
 G(y)
 =
 \frac1{\pi(y)}
 \E_x\sum_{t=0}^{\tau_B-1}\one_{\{X_t=y\}},
 \qquad y\in A.
 \label{eq:green-definition}
\end{equation}
It is nonnegative, finite, and vanishes on $B$. Since $\mu(y)=\mu(A)\pi(y), y\in A,$ Tonelli's theorem gives
\begin{equation}
 \int_A G\,\dd\mu
 =
 \mu(A)\E_x\tau_B,
 \quad \text{and} \quad
 \int_C G\,\dd\mu
 =
 \mu(A)\E_x\sum_{t=0}^{\tau_B-1}\one_C(X_t).
 \label{eq:green-integrals}
\end{equation}
Moreover, by \eqref{eq:killed-transition}, %and
%\eqref{eq:fundamental-matrix},
\begin{equation}
 G(y)
 =
 |A|\,\mathsf Z_D(x,y),
 \qquad y\in D.
 \label{eq:green-matrix}
\end{equation}

Now we can present the Dirichlet spectral gap of the killed chain.

\begin{lemma}
\label{lem:dirichlet-boundary}

Every $f:A\to\R$ satisfying $f|_B=0$ obeys
\begin{equation}
 \cE_A(f)
 \ge
 \frac{\mu(A)}{8n}\norm{f}{2,\pi}^2.
 \label{eq:dirichlet-boundary}
\end{equation}
Consequently, on $\ell^2(D,\pi|_D)$,
\begin{equation}
 \norm{\mathsf Z_D}{2\to2}
 \le
 \frac{8n}{\mu(A)},
 \label{eq:resolvent-norm}
\end{equation}
and, for every $y\in A$,
\begin{equation}
 0\le G(y)\le8n2^n.
 \label{eq:green-max}
\end{equation}

\end{lemma}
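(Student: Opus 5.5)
The plan is to obtain \eqref{eq:dirichlet-boundary} from the global Poincar\'e inequality \eqref{eq:global-poincare} by the standard argument for functions vanishing on a large set. The resolvent bound then follows by spectral theory, and the pointwise Green bound by a crude $\ell^2$ estimate.

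\emph{Step 1: the Dirichlet inequality.} Let $f$ vanish on $B$, where $\pi(B)\ge1/4$. By Cauchy--Schwarz,
\[
\pi(f)^2=\pi(f\one_D)^2\le\pi(D)\,\pi(f^2)\le\tfrac34\,\pi(f^2).
\]
Hence $\Var_\pi(f)=\pi(f^2)-\pi(f)^2\ge\frac14\pi(f^2)$. Combining this with \eqref{eq:global-poincare} gives
\[
\tfrac14\norm{f}{2,\pi}^2\le\frac{2n}{\mu(A)}\cE_A(f),
\]
which is exactly \eqref{eq:dirichlet-boundary}.

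\emph{Step 2: the resolvent bound.} Identify $g\in\ell^2(D,\pi|_D)$ with its zero extension $\bar g$ to $A$. Since $P$ is symmetric and $\pi$ is uniform on $A$, $P_D$ is self-adjoint on $\ell^2(D,\pi|_D)$. Moreover,
\[
\langle g,(I-P_D)g\rangle_{\pi|_D}=\langle\bar g,(I-P)\bar g\rangle_\pi=\cE_A(\bar g),
\]
because $(P\bar g)(y)=(P_Dg)(y)$ for $y\in D$ and $\bar g$ vanishes off $D$. By Step 1, the self-adjoint operator $I-P_D$ is therefore bounded below by $\frac{\mu(A)}{8n}I$ in quadratic form. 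Its inverse $\mathsf Z_D$, given by \eqref{eq:resolvent}, is then positive with operator norm at most $8n/\mu(A)$. This is \eqref{eq:resolvent-norm}.

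\emph{Step 3: the pointwise Green bound.} On $B$ we have $G=0$. For $y\in D$, use \eqref{eq:green-matrix} and write
\[
\mathsf Z_D(x,y)=\frac{\langle \one_x,\mathsf Z_D\one_y\rangle_{\pi}}{\pi(x)},
\]
where $\pi(x)=|A|^{-1}$. By Cauchy--Schwarz and \eqref{eq:resolvent-norm},
\[
|\langle\one_x,\mathsf Z_D\one_y\rangle_\pi|\le\frac{8n}{\mu(A)}\norm{\one_x}{2,\pi}\norm{\one_y}{2,\pi}=\frac{8n}{\mu(A)|A|}.
\]
It follows that
\[
G(y)=|A|\,\mathsf Z_D(x,y)\le|A|\cdot|A|\cdot\frac{8n}{\mu(A)|A|}=\frac{8n|A|}{\mu(A)}=8n2^n,
\]
using $|A|=\mu(A)2^n$. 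Nonnegativity of $G$ is immediate from the definition.

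The only point needing care is Step 2: one must check that the zero-extension identity for the Dirichlet form really holds. The diagonal holding term of $P$ and the edges leaving $D$ into $B$ both have to be accounted for correctly. Both are handled because $\cE_A(\bar g)=\langle\bar g,(I-P)\bar g\rangle_\pi$ is an exact identity for any $\bar g$ on $A$, so everything else is routine.
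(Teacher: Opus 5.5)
Your proposal is correct and follows the paper's proof essentially step for step: the Cauchy--Schwarz lower bound on $\Var_\pi(f)$ combined with \eqref{eq:global-poincare}, then the zero-extension identity and self-adjointness of $P_D$ for the resolvent norm, then an entrywise bound on the Green function by the operator norm. Your explicit Cauchy--Schwarz with the indicators $\one_x,\one_y$ is a spelled-out version of the paper's remark that $\pi|_D$ is a constant multiple of counting measure, so matrix entries are bounded by the $\ell^2$ operator norm.
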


\begin{proof}

Since $f$ vanishes on $B$, Cauchy--Schwarz gives
\[
 \pi(f)^2
 =
 \left(\sum_{y\in D}\pi(y)f(y)\right)^2
 \le
 \pi(D)\sum_{y\in D}\pi(y)f(y)^2.
\]
Hence
\begin{equation}
\Var_\pi(f)
\ge
(1-\pi(D))\pi(f^2)
=
\pi(B)\norm{f}{2,\pi}^2
\ge
\frac14\norm{f}{2,\pi}^2.
\label{eq:var_bound}
\end{equation}
%\[
% \Var_\pi(f)
% \ge
% (1-\pi(D))\pi(f^2)
% =
% \pi(B)\norm{f}{2,\pi}^2
% \ge
% \frac14\norm{f}{2,\pi}^2.
%\]
Combining \eqref{eq:var_bound} with \eqref{eq:global-poincare} proves
\eqref{eq:dirichlet-boundary}.

Now let a function on $D$ be extended by zero to $B$. Then
\[
 \ip{f}{(I-P_D)f}{\pi|_D}
 =
 \ip{f}{(I-P)f}{\pi}
 =
 \cE_A(f).
\]
Since $P$ is reversible with respect to $\pi$, the principal
submatrix $P_D$ is self-adjoint on $\ell^2(D,\pi|_D)$. Therefore
\eqref{eq:dirichlet-boundary} implies that the smallest eigenvalue of
$I-P_D$ is at least $\mu(A)/(8n)$. Recall \eqref{eq:resolvent}, the eigenvalues of $\mathsf Z_D$ are the reciprocals of those of
$I-P_D$. As $\mathsf Z_D$ is self-adjoint,
\[
\norm{\mathsf Z_D}{2\to2}
=
\frac{1}{\lambda_{\min}(I-P_D)}
\le
\frac{8n}{\mu(A)},
\]
which proves \eqref{eq:resolvent-norm}.

%Together with
%\eqref{eq:resolvent}, the spectral theorem yields
%\eqref{eq:resolvent-norm}. The restriction $\pi|_D$ need not be
%normalized here; it is simply the weight defining the inner product.

Finally, since $\pi$ is uniform on $A$, the measure $\pi|_D$ is a
constant multiple of counting measure. Hence, for every
$y\in D$,
\[
 0\le
 \mathsf Z_D(x,y)
 \le
 \norm{\mathsf Z_D}{2\to2}
 \le
 \frac{8n}{\mu(A)}.
\]
Using \eqref{eq:green-matrix} and $|A|=2^n\mu(A),$ we obtain
\[
 0\le G(y)
 \le
 |A|\frac{8n}{\mu(A)}
 =
 8n2^n,
 \qquad y\in D.
\]
The same bound holds on $B$, where $G=0$.

\end{proof}

The Green density satisfies the following energy identity.
%We now promote $G$ to a genuine Green function that can be used for energy estimates.

\begin{lemma}
\label{lem:green-source}

For every $f:A\to\R$ satisfying $f|_B=0$,
\begin{equation}
 \cE_A(G,f)=f(x).
 \label{eq:green-source}
\end{equation}
\end{lemma}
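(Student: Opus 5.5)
The plan is to reduce the identity to the resolvent relation \eqref{eq:resolvent}, using the reversibility of $P$ with respect to $\pi$ and the fact that both $G$ and $f$ vanish on $B$. First, since $f|_B=0$ and $G|_B=0$, the Dirichlet form only sees their restrictions to $D$. Extending by zero and arguing exactly as in the proof of Lemma~\ref{lem:dirichlet-boundary}, we get
\[
 \cE_A(G,f)=\langle G,(I-P)f\rangle_\pi=\sum_{y\in D}\pi(y)G(y)\bigl((I-P_D)f\bigr)(y).
\]
The key point here is that $G(y)=0$ for $y\in B$, so only rows indexed by $D$ contribute. Moreover, for $y\in D$ we have $(Pf)(y)=\sum_{z\in D}P(y,z)f(z)$ because $f$ vanishes on $B$, so $(Pf)(y)=(P_Df)(y)$.

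Second, we substitute \eqref{eq:green-matrix} and use $\pi(y)=|A|^{-1}$ on $A$. Then $\pi(y)G(y)=\mathsf Z_D(x,y)$ for $y\in D$, which gives
\[
 \cE_A(G,f)=\sum_{y\in D}\mathsf Z_D(x,y)\bigl((I-P_D)f|_D\bigr)(y)=\bigl(\mathsf Z_D(I-P_D)f|_D\bigr)(x).
\]
By \eqref{eq:resolvent}, $\mathsf Z_D(I-P_D)=I$, so this equals $f(x)$, recalling that $x\in D$.

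No step is a serious obstacle; the only care needed is bookkeeping. We must check that $\langle G,(I-P)f\rangle_\pi$ agrees with the symmetric edge-sum form \eqref{eq:dirichlet}, which holds by reversibility of $P$ with respect to $\pi$, since $P$ is symmetric and $\pi$ is uniform. We must also track which index set each sum runs over. In particular, the term $(I-P)f$ on $B$ is nonzero in general but is killed by $G|_B=0$. This is why the identity needs only $f|_B=0$ together with $G|_B=0$, and no condition on $(I-P)f$ on $B$.
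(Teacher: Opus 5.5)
Your proof is correct, and it differs from the paper's in which side of the resolvent identity \eqref{eq:resolvent} it uses.

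The paper works with $G$ as a column of $\mathsf Z_D$. It first uses reversibility of the killed chain to write $G(y)=\mathsf Z_D(y,x)/\pi(x)$. It then applies $(I-P_D)\mathsf Z_D=I$ to get the pointwise Green equation $(I-P)G(y)=\one_{\{y=x\}}/\pi(x)$ for $y\in D$. Finally it moves $I-P$ onto $G$ by self-adjointness, $\cE_A(G,f)=\langle (I-P)G,f\rangle_\pi$.

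You instead keep $I-P$ on the test function, where $\cE_A(G,f)=\langle G,(I-P)f\rangle_\pi$ is just the definition. You read $\pi(y)G(y)=\mathsf Z_D(x,y)$ as a row of $\mathsf Z_D$ and apply $\mathsf Z_D(I-P_D)=I$. This route needs no symmetry of the Green kernel and no reversibility at all. It does not even need uniformity of $\pi$, since $\pi(y)G(y)=\mathsf Z_D(x,y)$ follows directly from \eqref{eq:green-definition} and \eqref{eq:killed-transition}. It is simply the matrix form of Dynkin's formula $f(x)=\E_x\sum_{t<\tau_B}(I-P)f(X_t)$ for $f$ vanishing on $B$.

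The paper's route, in exchange, produces the pointwise equation for $(I-P)G$ on $D$ as a by-product, and the Appendix reuses it in the flux bound \eqref{chenapp:eq:flux}. Given self-adjointness, you can recover that equation from your identity by testing against $f=\one_{\{y\}}$ for $y\in D$.
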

%\subsection{The source identity and nonlinear truncations}

%\begin{lemma}
%\label{lem:green-source}
%For every $f:A\to\R$ that vanishes on $B$,
%\begin{equation}
% \cE_A(G,f)=f(x).
% \label{eq:green-source}
%\end{equation}
%\end{lemma}

\begin{proof}
The restriction $P_D$ remains reversible with respect to $\pi|_D$,
and hence so does every $P_D^t$. Therefore, for $y\in D$,
\[
 G(y)
 =
 \frac{\mathsf Z_D(x,y)}{\pi(y)}
 =
 \frac{\mathsf Z_D(y,x)}{\pi(x)}.
\]
Using \eqref{eq:resolvent},
\[
\begin{aligned}
 (I-P_D)G(y)
 &=
 \frac{1}{\pi(x)}
 \bigl[(I-P_D)\mathsf Z_D\bigr](y,x)\\
 &=
 \frac{\one_{\{y=x\}}}{\pi(x)}.
\end{aligned}
\]
Since $G=0$ on $B$,
\[
 (I-P)G(y)=(I-P_D)G(y),
 \qquad y\in D.
\]
%No pointwise identity for $(I-P)G$ is needed on $B$.

By reversibility and the assumption $f|_B=0$,
\begin{align*}
 \cE_A(G,f)
 &=
 \ip{(I-P)G}{f}{\pi}\\
 &=
 \sum_{y\in D}
 \pi(y)\frac{\one_{\{y=x\}}}{\pi(x)}f(y)
 =
 f(x).
\end{align*}

%Reversibility persists for the killed matrix and its powers. Thus,
%for $y\in D$,
%\begin{align*}
% G(y)
% &=\frac1{\pi(y)}\sum_{t\ge0}P_D^t(x,y)\\
% &=\frac1{\pi(x)}\sum_{t\ge0}P_D^t(y,x).
%\end{align*}
%Apply $I-P_D$ to the last expression and use
%\eqref{eq:resolvent}. This gives
%\[
% (I-P_D)G(y)=\frac{\one_{\{y=x\}}}{\pi(x)},\qquad y\in D.
%\]
%Because $G$ vanishes on $B$, the left side equals $(I-P)G(y)$ for
%$y\in D$. No such equation is being asserted at points of $B$.
%Self-adjointness of $I-P$ and $f|_B=0$ now give
%\begin{align*}
% \cE_A(G,f)
% &=\ip{(I-P)G}{f}{\pi}\\
% &=\sum_{y\in D}\pi(y)
%       \frac{\one_{\{y=x\}}}{\pi(x)}f(y)=f(x).
%\end{align*}
%This establishes the identity with the stated boundary condition.
\end{proof}

\begin{lemma}
\label{lem:nonlinear-green}

Let $\psi:[0,\infty)\to[0,\infty)$ be nondecreasing and
$1$-Lipschitz, with $\psi(0)=0$. Then
\begin{equation}
 \cE_A(\psi(G))
 \le
 \psi(G(x)).
 \label{eq:nonlinear-green}
\end{equation}

For every $s>0$,
\begin{equation}
 \mu\{y\in A:G(y)>s\}
 \le
 \frac{8n}{s}.
 \label{eq:green-level-tail}
\end{equation}
Moreover, for $f_s=\min\{(G-s)_+,s\},$
%\begin{equation}
% 
% \label{eq:truncation-definition}
%\end{equation}
%Then
we have $\cE_A(f_s)\le s,$ and 
\begin{equation}
 \supp\widetilde f_s
 \subseteq
 \{y\in A:G(y)>s\}.
 \label{eq:truncation-properties}
\end{equation}

\end{lemma}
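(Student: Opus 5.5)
The lemma has three parts, and I will prove them in order: the energy bound \eqref{eq:nonlinear-green} for $\psi(G)$, the level-set tail \eqref{eq:green-level-tail}, and the properties of the truncations $f_s$.

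\emph{Energy bound.} The plan is to compare $\cE_A(\psi(G))$ with $\cE_A(G,\psi(G))$ edge by edge and then apply Lemma~\ref{lem:green-source}. Since $\psi$ is nondecreasing and $1$-Lipschitz, for any reals $a,b\ge0$ we have
\[
 (\psi(a)-\psi(b))^2\le(\psi(a)-\psi(b))(a-b).
\]
Indeed, $\psi(a)-\psi(b)$ has the same sign as $a-b$, and its absolute value is at most $|a-b|$. Applying this to each term of \eqref{eq:dirichlet} with $a=G(y)$ and $b=G(y^{\oplus i})$ gives
\[
 \cE_A(\psi(G))\le\cE_A(G,\psi(G)).
\]
The function $f=\psi(G)$ vanishes on $B$, because $G|_B=0$ and $\psi(0)=0$. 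So Lemma~\ref{lem:green-source} applies and yields $\cE_A(G,\psi(G))=\psi(G(x))$, which proves \eqref{eq:nonlinear-green}.

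\emph{Tail bound.} Here I will use the truncation $\psi(u)=\min\{u,s\}$, which satisfies the hypotheses. It gives $\cE_A(\min\{G,s\})\le s$. Let $g=\min\{G,s\}$. Then $g$ vanishes on $B$ and equals $s$ on the set $\{G>s\}$. Applying Lemma~\ref{lem:dirichlet-boundary} to $g$,
\[
 \frac{\mu(A)}{8n}\,s^2\,\pi\{G>s\}\le\frac{\mu(A)}{8n}\norm{g}{2,\pi}^2\le\cE_A(g)\le s.
\]
Hence $\pi\{G>s\}\le 8n/(\mu(A)s)$. Converting with $\mu(\cdot\cap A)=\mu(A)\pi(\cdot)$ gives \eqref{eq:green-level-tail}.

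\emph{Truncations.} The function $\psi_s(u)=\min\{(u-s)_+,s\}$ is nondecreasing, $1$-Lipschitz, and satisfies $\psi_s(0)=0$. So the first part gives
\[
 \cE_A(f_s)\le\psi_s(G(x))\le s.
\]
For the support claim, note that $f_s(y)\ne0$ forces $G(y)>s$, and $\widetilde f_s$ vanishes off $A$; this gives \eqref{eq:truncation-properties}.

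The only step needing care is the pointwise edge inequality. It is elementary, so I expect no real obstacle.
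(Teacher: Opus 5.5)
Your proposal is correct and follows essentially the same argument as the paper. You use the same three steps: the pointwise edge inequality combined with Lemma~\ref{lem:green-source} for \eqref{eq:nonlinear-green}, then Lemma~\ref{lem:dirichlet-boundary} applied to $\min\{G,s\}$ for the tail bound, and finally the first part applied to $u\mapsto\min\{(u-s)_+,s\}$ for the truncations. The only difference is cosmetic: you work with $\pi$ and convert to $\mu$ at the end, whereas the paper writes the estimate directly in terms of $\mu$.
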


%\begin{lemma}[Nonlinear energy control]
%\label{lem:nonlinear-green}
%Let $\psi:[0,\infty)\to[0,\infty)$ be nondecreasing and
%$1$-Lipschitz, with $\psi(0)=0$. Then
%\begin{equation}
% \cE_A(\psi(G))\le\psi(G(x)).
% \label{eq:nonlinear-green}
%\end{equation}
%For every $s>0$, the following consequences hold:
%\begin{equation}
% \mu\{y\in A:G(y)>s\}\le\frac{8n}{s},
% \label{eq:green-level-tail}
%\end{equation}
%and, with $r_+=\max\{r,0\}$ and
%\begin{equation}
% f_s=\min\{(G-s)_+,s\},
% \label{eq:truncation-definition}
%\end{equation}
%we have
%\begin{equation}
% \cE_A(f_s)\le s,
% \qquad
% \supp\widetilde f_s\subseteq\{y\in A:G(y)>s\}.
% \label{eq:truncation-properties}
%\end{equation}
%\end{lemma}

\begin{proof}
For all $u,v\ge0$, monotonicity and the $1$-Lipschitz property give
\begin{equation}
(\psi(u)-\psi(v))^2
\le
(u-v)(\psi(u)-\psi(v)).
\label{eq:psi_ineq}
\end{equation}
Applying \eqref{eq:psi_ineq} to $G(y)$ and $G(z)$ and summing against the
nonnegative coefficients in \eqref{eq:dirichlet} yields
\[
 \cE_A(\psi(G))
 \le
 \cE_A(G,\psi(G)).
\]
Since $G=0$ on $B$ and $\psi(0)=0$, the function $\psi(G)$ also
vanishes on $B$. Lemma~\ref{lem:green-source} therefore gives
\[
 \cE_A(\psi(G))
 \le
 \psi(G(x)).
\]

Now take $\psi(u)=\min\{u,s\}$. Since
$\min\{G,s\}$ vanishes on $B$, Lemma~\ref{lem:dirichlet-boundary}
and \eqref{eq:nonlinear-green} give
\begin{align*}
 \int_A\min\{G,s\}^2\,\dd\mu
 &=
 \mu(A)\norm{\min\{G,s\}}{2,\pi}^2\\
 &\le
 8n\cE_A(\min\{G,s\})\\
 &\le
 8ns.
\end{align*}
On $\{y\in A:G(y)>s\}$, the squared truncation equals $s^2$.
Hence
\[
 s^2\mu\{y\in A:G(y)>s\}
 \le
 8ns.
\]

Finally, the function $u\longmapsto\min\{(u-s)_+,s\}$ is nondecreasing, $1$-Lipschitz, vanishes at $0$, and is bounded by
$s$. Applying \eqref{eq:nonlinear-green} gives $\cE_A(f_s)
 \le
 f_s(x)
 \le
 s.$ Moreover, $f_s(y)=0$ whenever $G(y)\le s$, which gives
\[
 \supp\widetilde f_s
 \subseteq
 \{y\in A:G(y)>s\}.
\]

%For any $u\ge v\ge0$, monotonicity and the Lipschitz condition imply
%\[
% 0\le\psi(u)-\psi(v)\le u-v.
%\]
%Multiplication by the nonnegative difference gives
%\[
% (\psi(u)-\psi(v))^2
% \le(u-v)(\psi(u)-\psi(v)).
%\]
%The same inequality holds when $u<v$ by interchanging $u$ and $v$.
%Apply it to the two values of $G$ at every edge and sum with the
%nonnegative coefficients in \eqref{eq:dirichlet}. This gives
%\[
% \cE_A(\psi(G))\le\cE_A(G,\psi(G)).
%\]
%The function $\psi(G)$ vanishes on $B$ because $G=0$ there and
%$\psi(0)=0$. Lemma~\ref{lem:green-source} therefore makes the right
%side equal to $\psi(G(x))$, proving \eqref{eq:nonlinear-green}.
%
%Take first $\psi(u)=\min\{u,s\}$. Its value is at most $s$, so
%\eqref{eq:nonlinear-green} and Lemma~\ref{lem:dirichlet-boundary}
%show
%\begin{align*}
% \int_A\min\{G,s\}^2\dd\mu
% &=a\norm{\min\{G,s\}}{2,\pi}^2\\
% &\le8n\cE_A(\min\{G,s\})\le8ns.
%\end{align*}
%On the set $\{G>s\}$ the squared truncation is $s^2$.
%Dividing the last bound by $s^2$ proves
%\eqref{eq:green-level-tail}.
%Next take $\psi(u)=\min\{(u-s)_+,s\}$. This function has all the
%required properties, has maximum $s$, and is zero for $u\le s$.
%These facts give both assertions in
%\eqref{eq:truncation-properties}.
\end{proof}

\section{Proof of the main result}
\label{sec:summation}

\subsection{The hitting time bound}

Recall that $\delta=\mu(A)/8$ and $\mu(C)\ge\mu(A)/2$.
Fix $B\subseteq A$ with $\pi(B)\ge1/4$ and $x\in A\setminus B$,
and let $G$ and $f_s$ be as in Section~5. We first choose the
hypercontractive parameter at each dyadic level so that the error
coefficient is independent of the level.

\begin{lemma}
\label{lem:level-inequality}
For every $s\ge8en$ and every integer $j\ge0$,
\begin{equation}
 \frac34\,\frac{\displaystyle\int_C f_{2^j s}^{\,2}\dd\mu}{2^j s}
 \le
 \frac{32n\log(s/(8n))}{2\log(s/(8n))+j\log2}
 +\frac{8n}{\delta s}\,
       \frac{\displaystyle\int_A f_{2^j s}^{\,2}\dd\mu}{2^j s}.
 \label{eq:level-inequality}
\end{equation}
\end{lemma}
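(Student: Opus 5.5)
Write $S=2^j s$ and $L=\log(s/(8n))\ge1$, which holds because $s\ge8en$. The plan is to apply Proposition~\ref{prop:core-functional} to $f=f_S$ with a level-dependent parameter $\theta\in(0,1]$, bound its two terms with Lemma~\ref{lem:nonlinear-green} and Lemma~\ref{lem:support-factor}, and divide by $S$. Lemma~\ref{lem:nonlinear-green} gives $\cE_A(f_S)\le S$. It also gives $\supp\widetilde f_S\subseteq\{G>S\}$, so that $\mu(\supp\widetilde f_S)\le 8n/S=:v\le1$. Lemma~\ref{lem:support-factor} then yields
\[
\|\widetilde f_S\|_{2-\theta,\mu}^2\le v^{\theta/(2-\theta)}\int_A f_S^2\,d\mu .
\]
Substituting into \eqref{eq:core-functional} and dividing by $S$ gives
\[
\frac34\,\frac{\int_C f_S^2\,d\mu}{S}\le \frac{2\mu(A)n\theta}{\delta}+\frac{1}{\delta}\Bigl(\frac{8n}{S}\Bigr)^{\theta/(2-\theta)}\frac{\int_A f_S^2\,d\mu}{S}.
\]
Since $\mu(A)/\delta=8$, the first term is $16n\theta$.

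The remaining task is to choose $\theta$. We need two things. First, $16n\theta$ should be at most the first term of \eqref{eq:level-inequality}, which requires
\[
\theta\le \frac{2L}{2L+j\log 2}.
\]
Second, $(8n/S)^{\theta/(2-\theta)}\le 8n/s$ is needed to match the second term. Note that $\log(S/(8n))=L+j\log2$. Taking logarithms, the second requirement becomes
\[
\frac{\theta}{2-\theta}\,(L+j\log2)\ge L .
\]

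Take $\theta=2L/(2L+j\log2)$, which lies in $(0,1]$ and satisfies the first requirement with equality. Then $2-\theta=(2L+2j\log2)/(2L+j\log2)$, so
\[
\frac{\theta}{2-\theta}=\frac{L}{L+j\log2},
\]
and the second requirement also holds with equality. This gives exactly \eqref{eq:level-inequality}.

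The only delicate point is this choice of $\theta$, which must make the support factor produce exactly the level-independent factor $8n/s$ while keeping the energy term of order $n$ (up to the logarithmic ratio). Everything else is direct substitution. I also need to check the edge cases $\theta=1$ (that is, $j=0$), which is allowed in Proposition~\ref{prop:core-functional}, and $v\le1$, which follows from $S\ge s\ge 8en$.
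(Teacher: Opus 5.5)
Your proposal is correct and follows essentially the same route as the paper. You apply Proposition~\ref{prop:core-functional} to $f_{2^js}$ with $\theta=2L/(2L+j\log 2)$, use Lemma~\ref{lem:nonlinear-green} for the energy and support bounds, and use Lemma~\ref{lem:support-factor} to reduce the $L^{2-\theta}$ term to $(8n/s)\int_A f_{2^js}^2\,d\mu$ before dividing by $2^js$. Your explicit checks that $v=8n/(2^js)\le 1$ and that $\theta\in(0,1]$ are steps the paper leaves implicit.
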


\begin{proof}
By Lemma~\ref{lem:nonlinear-green}, $\mu(\supp\widetilde f_{2^j s})\le\frac{8n}{2^j s},$ and $\cE_A(f_{2^j s})\le2^j s.$ Choose
\[
 \theta=\frac{2\log(s/(8n))}{2\log(s/(8n))+j\log2}\in(0,1],
 \qquad
 \left(\frac{8n}{2^j s}\right)^{\theta/(2-\theta)}=\frac{8n}{s}.
\]
Lemma~\ref{lem:support-factor} therefore gives
\[
 \|\widetilde f_{2^j s}\|_{2-\theta,\mu}^{\,2}
 \le\frac{8n}{s}\int_A f_{2^j s}^{\,2}\dd\mu.
\]
Substituting into Proposition~\ref{prop:core-functional}
and using $2\mu(A)n/\delta=16n$, we obtain
\[
 \frac34\int_C f_{2^j s}^{\,2}\dd\mu
 \le16n\theta\,2^j s
       +\frac{8n}{\delta s}\int_A f_{2^j s}^{\,2}\dd\mu.
\]
Division by $2^j s$ proves the claim.
\end{proof}

\begin{lemma}
\label{lem:scalar-summation}
For every $u\ge0$,
\begin{equation}
 \frac u2-1
 \le\sum_{j\ge0}2^{-j}\min\{(u-2^j)_+,2^j\}^{\,2}
 \le u.
 \label{eq:scalar-summation}
\end{equation}
Only finitely many terms are nonzero.
\end{lemma}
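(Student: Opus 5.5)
Write $g_j(u)=\min\{(u-2^j)_+,2^j\}$ and $S(u)=\sum_{j\ge0}2^{-j}g_j(u)^2$. The plan is to compute $S(u)$ essentially explicitly by splitting the index set according to where $u$ sits relative to the dyadic thresholds $2^j$ and $2^{j+1}$.

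\textbf{Sorting the terms.} For each $j$ there are three cases:
\begin{itemize}
\item if $u\le 2^j$, then $g_j(u)=0$;
\item if $2^j<u<2^{j+1}$, then $g_j(u)=u-2^j$;
\item if $u\ge 2^{j+1}$, then $g_j(u)=2^j$, and the term equals $2^{-j}\cdot 4^j=2^j$.
\end{itemize}
The first case shows that only indices with $2^j<u$ contribute, so there are finitely many nonzero terms. If $u\le1$ every term vanishes. Then $S(u)=0$, the upper bound $0\le u$ holds, and the lower bound holds because $u/2-1\le -1/2<0$. So assume $u>1$ and let $m\ge0$ be the integer with $2^m<u\le 2^{m+1}$. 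Indices $j<m$ have $u>2^{m}\ge 2^{j+1}$ and contribute $2^j$. Index $m$ contributes $2^{-m}(u-2^m)^2$, since $u-2^m\le 2^m$. All larger indices contribute nothing. Hence
\[
S(u)=(2^m-1)+2^{-m}(u-2^m)^2 .
\]

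\textbf{The two bounds.} Set $w=u-2^m\in(0,2^m]$. Then $S(u)=2^m-1+w^2/2^m$.
\begin{itemize}
\item \emph{Upper bound.} Since $w\le 2^m$, we have $w^2/2^m\le w$, so $S(u)\le 2^m-1+w=u-1\le u$.
\item \emph{Lower bound.} Using $w^2/2^m\ge0$ gives $S(u)\ge 2^m-1$. From $u\le 2^{m+1}$ we get $2^m\ge u/2$, so $S(u)\ge u/2-1$.
\end{itemize}

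No step is a real obstacle. The only care needed is at the boundary cases $u=2^{j+1}$ and $u\le1$, and the conventions above handle both consistently, because the formulas for the middle and last cases agree at $u=2^{j+1}$.

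The reason for the lemma is the following. Applied with $u=G(y)/s$, it converts the dyadic sum of the normalized truncations $f_{2^js}^2/(2^js)$ from Lemma~\ref{lem:level-inequality} into quantities comparable to $G/s$, up to the additive constant $1$.
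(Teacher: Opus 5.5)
Your proof is correct and follows the paper's argument: you compute the sum exactly as $2^m-1+2^{-m}(u-2^m)^2$ on a dyadic block, then read off both bounds from $0\le w^2/2^m\le w$ and $2^m\ge u/2$. The only difference is that you use the blocks $2^m<u\le 2^{m+1}$ where the paper uses $2^k\le u<2^{k+1}$, which is purely cosmetic.
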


\begin{proof}
The assertion is immediate for $u<1$. If $2^k\le u<2^{k+1}$,
then
\begin{equation}
 \sum_{j\ge0}2^{-j}\min\{(u-2^j)_+,2^j\}^{\,2}
 =2^k-1+\frac{(u-2^k)^2}{2^k}.
 \label{eq:scalar-exact}
\end{equation}
The right-hand side is at least $2^k-1\ge u/2-1$ and at most
$2^k-1+(u-2^k)=u-1\le u$.
\end{proof}

\begin{proof}[Proof of \eqref{eq:main-hitting}]
Let $s\ge8en$. By \eqref{eq:green-max},
\begin{equation}
 G(y)\le8n2^n<2^n s,\qquad y\in A,
 \label{eq:finite-level-cutoff}
\end{equation}
so $f_{2^j s}=0$ for $j\ge n$. Applying
Lemma~\ref{lem:scalar-summation} to $G(y)/s$ gives
\[
 \frac{G(y)}2-s
 \le\sum_{j=0}^{n}\frac{f_{2^j s}(y)^2}{2^j s}
 \le G(y).
\]
An integral comparison yields
\begin{align}
 \sum_{j=0}^{n}
 \frac{\log(s/(8n))}{2\log(s/(8n))+j\log2}
 &\le\frac12+\frac{\log(s/(8n))}{\log2}
       \log\left(1+\frac{n\log2}{2\log(s/(8n))}\right)
       \notag\\
 &\le2\log(s/(8n))\log(en),
 \label{eq:dyadic-sum-bound}
\end{align}
where the last inequality uses $\log(s/(8n))\ge1$.
Summing \eqref{eq:level-inequality} over $j=0,\ldots,n$ and
using the preceding estimates, we obtain
\begin{equation}
 \frac34\left(\frac12\int_C G\dd\mu-\mu(C)s\right)
 \le64n\log\frac{s}{8n}\log(en)
       +\frac{8n}{\delta s}\int_A G\dd\mu.
 \label{eq:integrated-summation}
\end{equation}

Choose
\[
 s=\frac{1024n}{3\mu(A)\mu(C)}>8en,
 \qquad
 \frac{8n}{\delta s}=\frac{3\mu(C)}{16}.
\]
Rearranging \eqref{eq:integrated-summation} gives
\begin{equation}
 \begin{split}
 \int_C G\dd\mu
 &\le\frac{\mu(C)}2\int_A G\dd\mu
       +\frac{512n}{3}
         \log\frac{128}{3\mu(A)\mu(C)}\log(en)\\
 &\qquad+\frac{2048n}{3\mu(A)}.
 \end{split}
 \label{eq:occupation-upper}
\end{equation}
On the other hand, $\mathbb E_x\tau_B<\infty$ by
\eqref{eq:crude-hit-tail}, and $C$ is increasing.
Proposition~\ref{prop:occupation}, together with
\eqref{eq:green-integrals}, therefore gives
\begin{equation}
 \mu(C)\int_A G\dd\mu
 \le\int_C G\dd\mu+\mu(A)d_n.
 \label{eq:occupation-lower}
\end{equation}
Combining \eqref{eq:occupation-upper} and
\eqref{eq:occupation-lower}, absorbing the term
$\frac12\mu(C)\int_A G\dd\mu$, and using
$\int_A G\dd\mu=\mu(A)\mathbb E_x\tau_B$, we conclude that
\begin{equation}
 \begin{split}
 \mathbb E_x\tau_B
 &\le\frac{1024n}{3\mu(A)\mu(C)}
       \log\frac{128}{3\mu(A)\mu(C)}\log(en)\\
 &\qquad+\frac{4096n}{3\mu(A)^2\mu(C)}
       +\frac{2d_n}{\mu(C)}.
 \end{split}
 \label{eq:H-before-constants}
\end{equation}
The error has been summed over all of $A$ before absorption;
no comparison between individual truncations on $C$ and on
$A\setminus C$ is needed.

Since $\mu(C)\ge\mu(A)/2$,
\begin{equation}
 \log\frac{128}{3\mu(A)\mu(C)}
 \le\log\frac{256}{3\mu(A)^2}
 \le5\log\frac e{\mu(A)}.
 \label{eq:log-density-bound}
\end{equation}
Together with $d_n\le n\log(en)$, this gives
\begin{equation}
 \begin{split}
 \mathbb E_x\tau_B
 &\le\frac{10240}{3\mu(A)^2}
       n\log\frac e{\mu(A)}\log(en)\\
 &\qquad+\frac{8192}{3\mu(A)^3}n
       +\frac4{\mu(A)}n\log(en).
 \end{split}
 \label{eq:H-refined}
\end{equation}
Using $\mu(A)\log(e/\mu(A))\le1$, $\mu(A)\le1$, and
$\log(en)\ge1$, we obtain
\begin{equation}
 \mathbb E_x\tau_B
 \le\left(\frac{10240+8192}{3}+4\right)
       \mu(A)^{-3}n\log(en)
 =6148\,\mu(A)^{-3}n\log(en).
 \label{eq:pointwise-large-hit}
\end{equation}
This bound is uniform in $B$ and $x\in A\setminus B$, and is
trivial for $x\in B$. Taking the maximum proves
\eqref{eq:main-hitting}.
\end{proof}

\subsection{From hitting to mixing}
\label{sec:mixing}

The singleton case is immediate. Otherwise, since $P$ is lazy,
$2P-I$ is an irreducible reversible Markov kernel whose lazy
version is $P$. Inserting independent geometric waiting times
of mean $2$ before its transitions gives the $P$-chain. Thus
\begin{equation}
 \mathbb E_x^{2P-I}\tau_B=\frac12\mathbb E_x\tau_B,
 \qquad
 \thit^{2P-I}(1/4)=\frac12\thit^P(1/4).
 \label{eq:hitting-clock-relation}
\end{equation}
Applying Theorem~\ref{thm:PS} to $2P-I$ with $\alpha=1/4$ gives
\[
 \tmix(P)
 \le C_{1/4}\thit^{2P-I}(1/4)
 =\frac{C_{1/4}}2\thit^P(1/4)
 \le3074C_{1/4}\,\mu(A)^{-3}n\log(en).
\]
Thus, we prove \eqref{eq:main-mixing} with $K=3074C_{1/4}$.

\appendix
% Insert this file after \appendix and before the main bibliography.
% Requires amsmath, amssymb, amsthm, and hyperref.
% Add the two entries in peres_appendix_references.tex to the existing
% thebibliography environment. Main-text references below follow the
% numbering in the supplied 18-page manuscript; no main-text labels
% have been assumed.

\appendix

\section{Improving the dependence on \texorpdfstring{$\mu(A)$}{mu(A)}}
\label{app:chen-improved-density}
\begin{center}
Yuval Peres
\end{center}

\theoremstyle{plain}
\newtheorem{chenappthm}{Theorem}[section]
\newtheorem{chenapplemma}[chenappthm]{Lemma}
\newtheorem{chenappprop}[chenappthm]{Proposition}
\numberwithin{equation}{section}
\setcounter{equation}{0}

%We retain the notation of the main text. In particular, $\mu$ is uniform
%on $\Omega_n=\{0,1\}^n$, $\pi=\mu(\cdot\mid A)$, and $P$ is the
%censored walk defined in (1.1). 

The following theorem improves the
dependence on $\mu(A)$ in Theorem~\ref{thm:main}.

\begin{chenappthm}\label{chenapp:thm:mixing}
There is an absolute constant $C<\infty$ such that, for every $n\ge1$
and every nonempty increasing set $A\subseteq\Omega_n$,
\begin{equation}\label{chenapp:eq:mixing-order}
 t_{\mathrm{mix}}(P)
 \le C\frac{n}{\mu(A)}
 \left[
 \log(en)+\log\frac{e}{\mu(A)}
 \log\left(1+\frac{n}{\log(e/\mu(A))}\right)
 \right].
\end{equation}
Moreover,
\begin{equation}\label{chenapp:eq:mixing-constant}
 t_{\mathrm{mix}}(P)
 \le250\frac{n}{\mu(A)}\log\frac{e}{\mu(A)}\log(en).
\end{equation}
The mixing threshold is $1/4$, as in \eqref{eq:tmix}.
\end{chenappthm}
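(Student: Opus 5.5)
The plan is to rerun the argument of Section~6 with three changes: lower the base truncation level $s$, choose the hypercontractive exponent more aggressively at each dyadic level, and remove the factor $1/\delta$ that currently multiplies the energy term. Then I would convert hitting to mixing via Theorem~\ref{thm:PS} exactly as in Section~\ref{sec:mixing}. The bound \eqref{eq:main-hitting} loses three powers of $\mu(A)$, and I would first locate where each one comes from. After absorption, $\mathbb E_x\tau_B=\mu(A)^{-1}\int_A G\,\dd\mu$ is bounded by $(\mu(A)\mu(C))^{-1}$ times the right side of \eqref{eq:occupation-upper}. That right side contains the low-level loss $\mu(C)s$ from Lemma~\ref{lem:scalar-summation}. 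Since $s$ had to be of order $n/(\mu(A)\mu(C))$, this term alone contributes $n\mu(A)^{-3}$.

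\textbf{Step 1: take $s=8en$ and make the error small through $\theta$ instead.} In Lemma~\ref{lem:level-inequality} I would stop forcing the support factor to equal $8n/s$. Instead, at level $j$ the support has measure at most $v_j=8n/(2^j s)\le e^{-1}2^{-j}$. I would pick $\theta_j$ so that $v_j^{\theta_j/(2-\theta_j)}=c\,\delta\,\mu(C)$ for a small absolute constant $c$. The error term $\delta^{-1}\|\widetilde f\|_{2-\theta_j,\mu}^2$ is then at most $c\,\mu(C)\int_A f^2\,\dd\mu$, so after summation it can still be absorbed against \eqref{eq:occupation-lower}. Solving for $\theta_j$ gives
\[
\theta_j\asymp\frac{\log(e/\mu(A))}{\log(e/\mu(A))+j}.
\]
Summing over $0\le j\le n$ (the cutoff \eqref{eq:finite-level-cutoff} still applies) gives $\sum_j\theta_j\lesssim\log\frac{e}{\mu(A)}\log\bigl(1+\frac{n}{\log(e/\mu(A))}\bigr)$. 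This is exactly the second term in \eqref{chenapp:eq:mixing-order}. The low-level loss $\mu(C)s$ then costs only $n/\mu(A)$ after dividing by $\mu(A)\mu(C)$. The term $d_n/\mu(C)$ is already of order $n\log(en)/\mu(A)$.

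\textbf{Step 2 (main obstacle): remove the remaining factor $1/\delta$ on the energy term.} Even after Step 1, the energy contribution in Proposition~\ref{prop:core-functional} is $2\mu(A)n\theta/\delta\cdot\cE_A(f)=16n\theta\,\cE_A(f)$. After summation and division by $\mu(A)\mu(C)$, this yields $n\mu(A)^{-2}$ times the logarithmic factors, one power of $\mu(A)$ too many. The loss occurs where the section Poincar\'e bound $\mu_{R,z}(A)\Var_{\pi_{R,z}}\le 2n\cE_{R,z}$ is compared with the $\mu_{R,z}(A)$-weighted energies of Lemma~\ref{lem:energy-averaging}, which costs a division by $\delta$.

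To avoid this, I would first try to average the unweighted section energies $\cE_{R,z}(f)$ directly over good sections. Their average is $\frac{\theta}{4n}\int_A\sum_i(\cdots)^2\,\mu_{R,x_{R^c}}(A)^{-1}\,\dd\mu$. Then I would exploit that the core has sections of density much larger than $\delta$ at most scales, perhaps by defining $C$ through a multi-threshold version of $b(x)$, so that the effective weight $\mu_{R,z}(A)^{-1}$ is $O(1)$ on most of the mass. If that fails, the fallback is to redo the absorption step with the sharper occupation estimate. The aim there is to bound $\int_C G$ against $\mu(C)\int_AG$ with the energy multiplier controlled by $\mu(A)$ rather than $\mu(A)\mu(C)$.

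\textbf{Step 3: constants.} For \eqref{chenapp:eq:mixing-constant}, I would bound $\log(1+n/\log(e/\mu(A)))\le\log(en)$, collect constants, and apply Theorem~\ref{thm:PS} through the relation \eqref{eq:hitting-clock-relation}. Obtaining $250$ will require the explicit Peres--Sousi constant, or a direct hitting-to-mixing argument with tracked constants.
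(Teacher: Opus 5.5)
There is a genuine gap. In Section~6 the powers of $\mu(A)$ are lost in the absorption step, where the hard indicator $\one_C$ is paired with the coefficient $\mu(C)$ of Proposition~\ref{prop:occupation}. Retuning $s$ and $\theta_j$ (Step~1) does not touch that step, and neither does reworking the section energies (Step~2).

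Step~1 fails as stated. For $v\le1$ and $0<\theta\le1$ one has $v^{\theta/(2-\theta)}\ge v$, so the mean term at level $j$ is at least $\delta^{-1}v_j\int_A f^2\,\dd\mu$. Absorbing it against \eqref{eq:occupation-lower} therefore forces $v_j\lesssim\delta\mu(C)\asymp\mu(A)^2$. With $s=8en$, i.e.\ $v_j=e^{-1}2^{-j}$, this fails for the first $\asymp\log(e/\mu(A))$ levels, where your equation for $\theta_j$ has no solution in $(0,1]$. Discarding those levels is the same as taking $s\asymp n/(\mu(A)\mu(C))$. That is exactly the choice of Section~6, and it brings back the $n\mu(A)^{-3}$ term.

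The mechanism you propose in Step~2 cannot work either. Take $A=\{x:x_1=\cdots=x_m=1\}$, so $\mu(A)=2^{-m}$. Every $a_k^\sigma(x)$ with $x\in A$ is at least $2^{-m}>\delta$, hence $C=A$ and every section through $A$ is good. The weights are $\mu_{R,x_{R^c}}(A)^{-1}=2^{|R\cap[m]|}$, which are of order $\mu(A)^{-1}$ on all of the mass when $\theta$ is near $1$, whatever core or thresholds you use. A direct computation gives $\E_R\sum_z2^{-|R^c|}\cE_{R,z}(f)=\theta\,((1+\theta)/2)^m\cE_A(f)$. At $\theta=1$ this is $\cE_A(f)$, not $O(\mu(A)\cE_A(f))$. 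In this example the loss is visibly on the occupation side: Proposition~\ref{prop:occupation} with $C=A$ reads $\mu(A)\E_xT\le\E_xT+d_n$, although the chain never leaves $C$. Your fallback only restates the goal.

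The missing idea is to replace $\one_C$ by the soft weight $T_{1-\theta}\one_A(y)=\E_R\,\mu_{R,y_{R^c}}(A)$, the acceptance probability of a noise proposal, and to use this same weight on both sides.
\begin{itemize}
\item Multiply the section decomposition by $\mu_{R,z}(A)$ and average over all sections, instead of restricting to good ones and dividing by $\delta$. This gives $\pi((T_{1-\theta}\one_A)f^2)\le2n\theta\cE_A(f)+r^{\theta/(2-\theta)}\pi(f^2)$ with no $\delta$ anywhere (Lemma~\ref{chenapp:lem:noise}).
\item Proposition~\ref{prop:occupation} is replaced by the increasing potentials $h_\rho=n\int_0^\rho(T_u\one_A-\mu(A))\,du/u$, which satisfy $(P-I)h_\rho\ge\mu(A)-T_\rho\one_A$ on $A$.
\item Since $T_{1-\theta}\one_A\ge\frac34\delta$ on $C$, this is an absorption coefficient $\mu(A)$ where the core argument effectively had $\frac34\delta\mu(C)\asymp\mu(A)^2$. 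The support target becomes $\mu(A)/2$, and $s=16n/\mu(A)$ suffices.
\item The weight depends on the level through $\theta_j$, so using one potential per level would cost $d_n$ per level. The paper avoids this with the convex truncations $\psi_t(u)=(u-t)_+^2/u$ and summation by parts to $\sum_j\cE_A(h_j-h_{j-1},\psi_{q^js}(G))$. It then uses the flux bound $|\cE_A(h,\psi_t(G))|\le\mathrm{osc}_A h$ and $\sum_j\mathrm{osc}_A(h_j-h_{j-1})\le d_n$.
\end{itemize}
Finally, the constant $250$ cannot come from Theorem~\ref{thm:PS}, whose $C_{1/4}$ is not explicit. The paper proves $\tmix(Q)<5.65\,\thit^Q(1/4)+3$ directly (Lemma~\ref{chenapp:lem:direct-tv}) and combines it with $C_H<43.483$.
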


The proof uses the section Poincar\'e inequality, cube hypercontractivity,
and the killed Green function from Section~\ref{sec:green}.  The case $|A|=1$ is immediate, so we assume $|A|\ge2$ throughout the
proof. Recall that $d_n=n\sum_{i=1}^n i^{-1}$.
In particular, $d_n\le n\log(en)$.

\subsection{Censored noise and the killed Green function}
\label{chenapp:subsec:noise}

Fix $B\subseteq A$ with $\pi(B)\ge1/4$ and $x\in A\setminus B$.
Let $G$ be the killed Green density defined in \eqref{eq:green-definition}. Then we have $\pi(G)=\mathbb E_x\tau_B,$ and
\begin{equation}\label{chenapp:eq:green}
 \mathcal E_A(G,f)=f(x)\quad\text{whenever }f|_B=0,
\end{equation}
and, for every $t>0$,
\begin{equation}\label{chenapp:eq:green-tail}
 \mu\{y\in A:G(y)>t\}\le\frac{8n}{t},
 \qquad
 0\le G(y)\le8n2^n\quad(y\in A).
\end{equation}
The function $G$ vanishes on $B$.

%Let $T_\rho$ be the cube noise operator from Section~2.3. 

For
$0<\theta\le1$, define a Markov kernel $R_\theta$ on $A$ by proposing
a move according to $T_{1-\theta}$ and rejecting the proposal if it
leaves $A$. Thus $R_\theta(y,z)=T_{1-\theta}(y,z)$ for distinct
$y,z\in A$, and the diagonal entries make the row sums equal to one.
The kernel is reversible with respect to $\pi$. Write
$\mathcal E_{R_\theta}(f)=\langle f,(I-R_\theta)f\rangle_\pi$.
For $y\in A$, $T_{1-\theta}\mathbf1_A(y)$ is the probability that
the proposal from $y$ belongs to $A$.

\begin{chenapplemma}\label{chenapp:lem:noise}
Let $f:A\to\mathbb R$, let $\widetilde f$ be its zero extension to
$\Omega_n$, and set $r=\mu(\operatorname{supp}\widetilde f)$. For every
$0<\theta\le1$,
\begin{align}
 \mathcal E_{R_\theta}(f)
 &\le2n\theta\mathcal E_A(f),
 \label{chenapp:eq:noise-energy}\\
 \pi\big((T_{1-\theta}\mathbf1_A)f^2\big)
 &\le2n\theta\mathcal E_A(f)
       +r^{\theta/(2-\theta)}\pi(f^2).
 \label{chenapp:eq:noise}
\end{align}
\end{chenapplemma}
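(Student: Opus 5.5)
The plan is to write the censored noise kernel $R_\theta$ as an average, over the random set $R$ of Lemma~\ref{lem:good-sections}, of ``resample all of $R$, reject if outside $A$'' kernels. Each such kernel acts inside a single coordinate section, so both inequalities reduce to results already proved: the section Poincar\'e inequality (Lemma~\ref{lem:section-poincare}) with the energy averaging identity (Lemma~\ref{lem:energy-averaging}) for \eqref{chenapp:eq:noise-energy}, and the hypercontractive bound (Lemma~\ref{lem:mean-averaging}) with the H\"older support estimate (Lemma~\ref{lem:support-factor}) for \eqref{chenapp:eq:noise}.

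\emph{Step 1: section decomposition of $\mathcal E_{R_\theta}$.} Let $R\subseteq[n]$ contain each coordinate independently with probability $\theta$. A $T_{1-\theta}$-proposal from $y$ keeps $y_{R^c}$ and draws $y_R$ uniformly afresh, so
\[
 T_{1-\theta}(y,z)=\E_R\bigl[\one_{\{y_{R^c}=z_{R^c}\}}2^{-|R|}\bigr].
\]
Since $R_\theta(y,z)=T_{1-\theta}(y,z)$ for distinct $y,z\in A$ and $\pi(y)=\mu(y)/\mu(A)$, we get
\[
 \mathcal E_{R_\theta}(f)=\frac{1}{2\mu(A)}\sum_{y,z\in A}\mu(y)T_{1-\theta}(y,z)(f(y)-f(z))^2 .
\]
Diagonal terms vanish, so no care is needed there. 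Inserting the formula for $T_{1-\theta}$ and grouping by $z'=y_{R^c}$, the inner sum over one section equals $2^{-|R^c|}\mu_{R,z'}(A)^2\Var_{\pi_{R,z'}}(f)$, by the identity $\tfrac12\sum_{u,v}p(u)p(v)(f(u)-f(v))^2=\Var_p(f)$. Hence
\[
 \mathcal E_{R_\theta}(f)=\frac1{\mu(A)}\E_R\sum_{z'}2^{-|R^c|}\mu_{R,z'}(A)^2\Var_{\pi_{R,z'}}(f).
\]

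\emph{Step 2: proof of \eqref{chenapp:eq:noise-energy}.} Lemma~\ref{lem:section-poincare} gives $\mu_{R,z'}(A)\Var_{\pi_{R,z'}}(f)\le 2n\,\cE_{R,z'}(f)$ on every nonempty section; empty sections contribute zero. Substituting this into Step 1 gives
\[
 \mathcal E_{R_\theta}(f)\le\frac{2n}{\mu(A)}\E_R\sum_{z'}2^{-|R^c|}\mu_{R,z'}(A)\cE_{R,z'}(f),
\]
and Lemma~\ref{lem:energy-averaging} identifies the right side as $2n\theta\cE_A(f)$. No density lower bound on sections is needed here, because one factor $\mu_{R,z'}(A)$ is absorbed by the Poincar\'e constant.

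\emph{Step 3: proof of \eqref{chenapp:eq:noise}.} Expanding the square in the expression for $\mathcal E_{R_\theta}(f)$ and using symmetry of $T_{1-\theta}$ gives
\[
 \mathcal E_{R_\theta}(f)=\pi\bigl((T_{1-\theta}\one_A)f^2\bigr)-\frac1{\mu(A)}\langle\widetilde f,T_{1-\theta}\widetilde f\rangle_\mu .
\]
Here the diagonal terms cancel between the two pieces. Since $T_{1-\theta}=T_{\sqrt{1-\theta}}^2$ with $T_\rho$ self-adjoint, $\langle\widetilde f,T_{1-\theta}\widetilde f\rangle_\mu=\|T_{\sqrt{1-\theta}}\widetilde f\|_{2,\mu}^2$. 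By Lemma~\ref{lem:mean-averaging} this is at most $\|\widetilde f\|_{2-\theta,\mu}^2$. By Lemma~\ref{lem:support-factor} with $v=r$, this is in turn at most $r^{\theta/(2-\theta)}\int_Af^2\,d\mu=\mu(A)\,r^{\theta/(2-\theta)}\pi(f^2)$. Rearranging and applying \eqref{chenapp:eq:noise-energy} then yields \eqref{chenapp:eq:noise}.

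The main obstacle is Step~1: getting the section bookkeeping exactly right, so that the weight $\mu_{R,z'}(A)^2/\mu(A)$ appears rather than some other power. I would check it first on $R=[n]$, where $\mathcal E_{R_\theta}=\mu(A)\Var_\pi(f)$ should hold, and on $R=\varnothing$, where it should vanish. The remaining steps are direct substitutions into the earlier lemmas.
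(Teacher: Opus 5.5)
Your proposal is correct and follows the paper's proof essentially step for step. You use the section decomposition $\mathcal E_{R_\theta}(f)=\mu(A)^{-1}\E_R\sum_{z}2^{-|R^c|}\mu_{R,z}(A)^2\Var_{\pi_{R,z}}(f)$ together with Lemma~\ref{lem:section-poincare} and Lemma~\ref{lem:energy-averaging}, then the identity $\mathcal E_{R_\theta}(f)=\pi\bigl((T_{1-\theta}\one_A)f^2\bigr)-\mu(A)^{-1}\langle\widetilde f,T_{1-\theta}\widetilde f\rangle_\mu$ with hypercontractivity and Lemma~\ref{lem:support-factor}, exactly as the paper does; your Step~1 just spells out the bookkeeping that the paper compresses into the remark that two independent uniform points of a section both lie in $A$ with probability $\mu_{R,z}(A)^2$.
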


\begin{proof}
Choose $R\subseteq[n]$ by including each coordinate independently with
probability $\theta$. These are the coordinates refreshed by the proposal. Averaging over the frozen coordinates
$z$ always means assigning weight $2^{-|R^c|}$ to each
$z\in\{0,1\}^{R^c}$. %For each $R$, use the section notation $\mu_{R,z}$, $\pi_{R,z}$, and
%$\mathcal E_{R,z}$ from Section~2.1. 

Conditionally on $R$ and $z$, two independent uniform points in the
full section both belong to $A$ with probability $\mu_{R,z}(A)^2$.
The Dirichlet form therefore satisfies
\begin{align*}
 \mathcal E_{R_\theta}(f)
 &=\frac{1}{\mu(A)}\mathbb E_R
   \sum_{z\in\{0,1\}^{R^c}}2^{-|R^c|}
   \mu_{R,z}(A)^2\operatorname{Var}_{\pi_{R,z}}(f)\\
 &\le\frac{2n}{\mu(A)}\mathbb E_R
   \sum_{z\in\{0,1\}^{R^c}}2^{-|R^c|}
   \mu_{R,z}(A)\mathcal E_{R,z}(f)\\
 &=2n\theta\mathcal E_A(f).
\end{align*}
Empty sections contribute zero. With the normalization $1/(2n)$ for each
allowed edge transition. This proves \eqref{chenapp:eq:noise-energy}.

Since rejected proposals produce no change in $f$,
\[
 \mathcal E_{R_\theta}(f)
 =\pi\big((T_{1-\theta}\mathbf1_A)f^2\big)
  -\frac{1}{\mu(A)}
   \langle\widetilde f,T_{1-\theta}\widetilde f\rangle_\mu.
\]
The noise operators are self-adjoint and satisfy
$T_\rho T_\sigma=T_{\rho\sigma}$. Hence hypercontractivity \eqref{eq:hypercontractivity} and
Lemma~\ref{lem:support-factor} give
\begin{align*}
 \langle\widetilde f,T_{1-\theta}\widetilde f\rangle_\mu
 &=\|T_{\sqrt{1-\theta}}\widetilde f\|_{2,\mu}^2\\
 &\le\|\widetilde f\|_{2-\theta,\mu}^2
 \le r^{\theta/(2-\theta)}\|\widetilde f\|_{2,\mu}^2.
\end{align*}
Using $\|\widetilde f\|_{2,\mu}^2=\mu(A)\pi(f^2)$ proves
\eqref{chenapp:eq:noise}. The assertion is immediate when $f=0$.
\end{proof}

\subsection{Convex truncations and occupation estimates}

For $t>0$ and $q>1$, define, for $u>0$, $\psi_t(u)=\frac{(u-t)_+^2}{u},$ $\phi_{t,q}(u)=\psi_t(u)-\psi_{qt}(u),$ $v_{t,q}(u)=\sqrt{t\phi_{t,q}(u)},$ and set all three functions to zero at $u=0$. The function
$\psi_t$ is convex, nondecreasing, and $1$-Lipschitz. Indeed, its
derivative is zero on $[0,t]$ and equals $1-t^2/u^2$ for $u>t$.
Also $\phi_{t,q}\ge0$ and $\phi_{t,q}(u)=0$ when $u\le t$.
For every $s>0$ and $u\ge0$, telescoping gives
\begin{equation}\label{chenapp:eq:telescoping}
 \sum_{j\ge0}\phi_{q^js,q}(u)=\psi_s(u)\ge u-2s.
\end{equation}
Only finitely many terms in this sum are nonzero.

For a real-valued function $h$ on $A$, write
$\operatorname{osc}_A h=\max_A h-\min_A h$.

\begin{chenapplemma}\label{chenapp:lem:truncations}
For every $t>0$ and $q>1$,
\begin{equation}\label{chenapp:eq:trunc-energy}
 \mathcal E_A(v_{t,q}(G))\le tI(q),
\end{equation}
where
\begin{equation}\label{chenapp:eq:Iq}
 I(q)=\frac14\log q
 +\frac{q-1}{q+1}\log\frac{2q}{q-1}
 \le\frac14\log q+\log2.
\end{equation}
Moreover, for every $h:A\to\mathbb R$,
\begin{equation}\label{chenapp:eq:flux}
 |\mathcal E_A(h,\psi_t(G))|\le\operatorname{osc}_A h.
\end{equation}
\end{chenapplemma}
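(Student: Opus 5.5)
The lemma has two parts: the energy bound \eqref{chenapp:eq:trunc-energy} for $v_{t,q}(G)$ and the flux bound \eqref{chenapp:eq:flux} for $\psi_t(G)$. I would prove both from the source identity \eqref{chenapp:eq:green}, which is Lemma~\ref{lem:green-source}, combined with a pointwise inequality along each edge of the Dirichlet form \eqref{eq:dirichlet}. This mirrors the proof of Lemma~\ref{lem:nonlinear-green}.

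\emph{Energy bound.} The function $v=v_{t,q}$ is absolutely continuous on $[0,\infty)$ and vanishes on $[0,t]$. Define
\[
 F(u)=\frac1t\int_0^u v'(r)^2\,\dd r .
\]
Then $F$ is nondecreasing and $F(0)=0$. For $a\ge b\ge0$, Cauchy--Schwarz gives
\[
 (v(a)-v(b))^2=\Bigl(\int_b^a v'\Bigr)^2\le(a-b)\int_b^a v'^2=t\,(a-b)\bigl(F(a)-F(b)\bigr).
\]
Summing this over edges with the nonnegative coefficients of \eqref{eq:dirichlet} yields $\cE_A(v(G))\le t\,\cE_A(G,F(G))$. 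Since $G=0$ on $B$ and $F(0)=0$, the function $F(G)$ vanishes on $B$. Hence \eqref{chenapp:eq:green} gives
\[
 \cE_A(v(G))\le t\,F(G(x))\le\int_0^\infty v'^2 .
\]
It remains to check that this integral equals $t\,I(q)$. From $v^2=t\phi$ we get $v'^2=t\,\phi'^2/(4\phi)$. After the substitution $u=tw$ the integral splits into two pieces. On $1<w<q$ the integrand is $(w+1)^2/(4w^3)$. On $w>q$ we have $\phi/t=2(q-1)-(q^2-1)/w$, and the integrand is $(q-1)(q+1)^2/\bigl(4w^3(2w-q-1)\bigr)$. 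Both pieces are elementary: the first produces $\tfrac14\log q$ plus rational terms, and the second produces the logarithm $\log\frac{2q}{q-1}$ through partial fractions. Adding them should give exactly $I(q)$.

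For the upper bound in \eqref{chenapp:eq:Iq} I would use the elementary estimate $x\log(2/x)\le\log2$ for $x=(q-1)/q\in(0,1)$. Doing this calculus carefully, so that the rational terms cancel to the stated closed form, is the main obstacle. It is routine but error-prone; if the exact form resists, the proof only needs the inequality $I(q)\le\frac14\log q+\log2$.

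\emph{Flux bound.} Write $\psi=\psi_t$, which is convex, nondecreasing, $1$-Lipschitz, with $\psi(0)=0$. Convexity gives $\psi(G(y))-\psi(G(z))\le\psi'(G(y))(G(y)-G(z))$, so pointwise
\[
 (I-P)\psi(G)(y)\le\psi'(G(y))\,(I-P)G(y).
\]
From the proof of Lemma~\ref{lem:green-source}, $(I-P)G=\one_{\{x\}}/\pi(x)$ on $D$. Therefore $(I-P)\psi(G)\le0$ on $D\setminus\{x\}$, and $(I-P)\psi(G)(x)\le1/\pi(x)$ because $\psi'\le1$. On $B$ we have $\psi(G)=0$, so there $(I-P)\psi(G)=-P\psi(G)\le0$.

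Now consider the signed measure $m(y)=\pi(y)\,(I-P)\psi(G)(y)$. Its total mass is $\ip{\one}{(I-P)\psi(G)}{\pi}=0$ by reversibility. Its positive part sits only at $x$ and has mass at most $1$, so the negative part also has mass at most $1$. For any constant $c$, $\cE_A(h,\psi(G))=\sum_y (h(y)-c)\,m(y)$. Choosing $c=\min_A h$ gives $0\le h-c\le\operatorname{osc}_A h$, and the bound on each part of $m$ yields $|\cE_A(h,\psi_t(G))|\le\operatorname{osc}_A h$.
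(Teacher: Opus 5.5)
Your proof of \eqref{chenapp:eq:trunc-energy} and \eqref{chenapp:eq:flux} follows the paper's argument essentially step for step. For the energy bound, you use the edgewise Cauchy--Schwarz bound $(v(a)-v(b))^2\le(a-b)\,(w(a)-w(b))$ with $w(u)=\int_0^u v'^2$, and close it with the source identity. For the flux bound, you use convexity of $\psi_t$ and the fact that $(I-P)\psi_t(G)$ has $\pi$-mean zero, with its positive part sitting at $x$ and of mass at most one.

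The integral you left unfinished does work out. On $(1,q)$ you get $\frac14\log q+\frac58-\frac1{2q}-\frac1{8q^2}$. On $(q,\infty)$ your partial fractions give $\frac{q-1}{q+1}\log\frac{2q}{q-1}-\frac58+\frac1{2q}+\frac1{8q^2}$. The rational parts cancel and leave exactly $I(q)$. Your fallback of proving only $\int v'^2\le t(\frac14\log q+\log 2)$ would not establish \eqref{chenapp:eq:trunc-energy} as stated, since the inequality in \eqref{chenapp:eq:Iq} points the other way. It would only give the weaker form that is used later.

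The step that fails is your proof of $I(q)\le\frac14\log q+\log2$. The estimate $x\log(2/x)\le\log 2$ is false on $(0,1)$. The function $g(x)=x\log(2/x)$ is concave with $g(1/2)=g(1)=\log2$, so $g>\log 2$ on all of $(1/2,1)$. Its maximum is $g(2/e)=2/e>\log 2$. With $x=(q-1)/q$, this bad range is every $q>2$, so replacing the prefactor $\frac{q-1}{q+1}$ by the larger $\frac{q-1}{q}$ loses too much.

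The fix is to keep the exact prefactor, as the paper does. Put $z=(q-1)/(q+1)\in(0,1)$. Then $\frac{2q}{q-1}=1+\frac1z$, so the second term of $I(q)$ equals $z\log(1+1/z)$. Its derivative is $\log(1+1/z)-\frac{1}{1+z}$, which is positive because $\log(1+y)>\frac{y}{1+y}$ for $y>0$. Hence the term is at most its value $\log 2$ at $z=1$. In your variable, the term is $\frac{x}{2-x}\log\frac2x$, not $x\log\frac2x$.
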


\begin{proof}
We first consider an energy estimate for compositions of $G$. Suppose
$v:[0,\infty)\to\mathbb R$ is locally absolutely continuous, $v(0)=0$,
and $\int_0^\infty|v'(r)|^2\,dr<\infty$. Set
$w(u)=\int_0^u|v'(r)|^2\,dr$. For $0\le r_1\le r_2$, Cauchy--Schwarz gives
\begin{equation}
 (v(r_2)-v(r_1))^2
 \le (r_2-r_1)\int_{r_1}^{r_2}|v'(r)|^2\,dr
 = (r_2-r_1)(w(r_2)-w(r_1)).
 \label{eq:v-w-increment}
\end{equation}
Applying \eqref{eq:v-w-increment} to each edge in the Dirichlet form, and using
$w(G)=0$ on $B$, we obtain from \eqref{chenapp:eq:green} that
\begin{equation}\label{chenapp:eq:composition-energy}
 \mathcal E_A(v(G))
 \le\mathcal E_A(G,w(G))
 =w(G(x))
 \le\int_0^\infty|v'(r)|^2\,dr.
\end{equation}

For $v=v_{t,q}$, write $v(u)=tV_q(u/t)$. Then
\[
 V_q(r)=
 \begin{cases}
 0,&0\le r\le1,\\[2pt]
 (r-1)/\sqrt r,&1<r\le q,\\[2pt]
 \sqrt{(q-1)(2-(q+1)/r)},&r\ge q.
 \end{cases}
\]
The two expressions agree at $r=q$. Direct integration yields
\begin{align*}
 \int_1^q|V_q'(r)|^2\,dr
 &=\frac14\log q+\frac58-\frac{1}{2q}-\frac{1}{8q^2},\\
 \int_q^\infty|V_q'(r)|^2\,dr
 &=\frac{q-1}{q+1}\log\frac{2q}{q-1}
   -\frac58+\frac{1}{2q}+\frac{1}{8q^2}.
\end{align*}
Thus $\int_0^\infty|v_{t,q}'(u)|^2\,du=tI(q)$, and
\eqref{chenapp:eq:composition-energy} proves
\eqref{chenapp:eq:trunc-energy}. To obtain the bound in
\eqref{chenapp:eq:Iq}, put $z=(q-1)/(q+1)\in(0,1)$. The second term
in $I(q)$ is $z\log(1+1/z)$, which is increasing in $z>0$ and is at
most $\log2$ for $z\le1$.

To prove \eqref{chenapp:eq:flux}, convexity gives, for every $y\in A$,
\[
 (P-I)\psi_t(G)(y)
 \ge\psi_t'(G(y))(P-I)G(y).
\]
On $A\setminus B$, the Green equation implies
$(P-I)G(y)=-\mathbf1_{\{y=x\}}/\pi(x)$. On $B$, we have $G=0$ and
$\psi_t'(0)=0$. Therefore $(P-I)\psi_t(G)$ is nonnegative off $x$,
and its value at $x$ is at least $-1/\pi(x)$.
Its negative part consequently has $\pi$-integral at most one.
Stationarity gives $\pi((P-I)\psi_t(G))=0$, so its positive part has
the same integral. Subtracting $\min_A h$ from $h$, we conclude that
\[
 \big|\langle h,(P-I)\psi_t(G)\rangle_\pi\big|
 \le\operatorname{osc}_A h.
\]
%This is \eqref{chenapp:eq:flux}. In particular, no condition on
%$h|_B$ is needed.
\end{proof}

We next construct potentials for the acceptance probabilities. Let $K$
be the uncensored coordinate-resampling kernel in Section~\ref{sec:occupation}, and
for $0\le\rho<1$ define on the full cube
\begin{equation}\label{chenapp:eq:potential}
 h_\rho=n\int_0^\rho
       \big(T_u\mathbf1_A-\mu(A)\big)\,\frac{du}{u}.
\end{equation}
The finite Walsh expansion shows that the integrand has a finite limit
at zero. Thus $h_\rho$ is well defined and $h_0=0$.

\begin{chenapplemma}\label{chenapp:lem:potential}
The function $h_\rho$ is increasing, and on $A$ it satisfies
\begin{equation}\label{chenapp:eq:drift}
 (P-I)h_\rho\ge\mu(A)-T_\rho\mathbf1_A.
\end{equation}
For every nondecreasing sequence $0=\rho_0\le\rho_1\le\cdots<1$,
\begin{equation}\label{chenapp:eq:oscillations}
 \sum_{j\ge1}\operatorname{osc}_A
       (h_{\rho_j}-h_{\rho_{j-1}})\le d_n.
\end{equation}
\end{chenapplemma}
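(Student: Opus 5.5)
The plan is to reduce everything to the Walsh diagonalization of $K$ and $T_u$. Then I would reuse the drift comparison from the proof of Proposition~\ref{prop:occupation} and a coupling of $T_u$ started from $0^n$ and $1^n$.

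\emph{Monotonicity.} For each fixed $u\in[0,1]$, the function $T_u\one_A$ is increasing on $\cube$. To see this, couple the noisy copies of $x\le y$ using the same retention set and the same fresh bits. The coupled copies stay ordered, and $A$ is increasing. Since the weight $n\,du/u$ is positive on $(0,\rho]$, the integral $h_\rho$ is increasing as well. The same argument shows that each increment $h_{\rho_j}-h_{\rho_{j-1}}$ is increasing whenever $\rho_{j-1}\le\rho_j$, which I will need for the oscillation bound.

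\emph{Free Poisson equation and drift.} Write $K=I+\frac1n\sum_{i}(E_i-I)$, where $E_i$ averages out coordinate $i$. Then $(K-I)\chi_S=-\frac{|S|}{n}\chi_S$ and $T_u\chi_S=u^{|S|}\chi_S$. Hence
\[
 u\,\frac{d}{du}T_u\chi_S=|S|\,u^{|S|}\chi_S=n(I-K)T_u\chi_S,
\]
and this identity extends to $T_u\one_A$ by linearity. Applying $I-K$ inside the integral, which is a finite Walsh sum, gives
\[
 (I-K)h_\rho=\int_0^\rho \frac{d}{du}T_u\one_A\,du=T_\rho\one_A-\mu(A),
\]
since $T_0\one_A=\mu(A)$. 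This is also why the integrand has a finite limit at $0$: $T_u\one_A-\mu(A)=\sum_{S\ne\varnothing}u^{|S|}\widehat{\one_A}(S)\chi_S=O(u)$.

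Now I repeat the comparison used in Proposition~\ref{prop:occupation}. For $z\in A$,
\[
 Ph_\rho(z)-Kh_\rho(z)=\sum_{y\notin A}K(z,y)\bigl(h_\rho(z)-h_\rho(y)\bigr)\ge0,
\]
because every rejected neighbor $y$ satisfies $y\le z$ and $h_\rho$ is increasing. Combining this with the Poisson equation yields \eqref{chenapp:eq:drift}.

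\emph{Oscillations.} Each increment is increasing on $\cube$, so its oscillation over $A$ is at most its oscillation over the cube, namely
\[
 n\int_{\rho_{j-1}}^{\rho_j}\bigl(T_u\one_A(1^n)-T_u\one_A(0^n)\bigr)\frac{du}{u}.
\]
The integrand is nonnegative, so summing over $j$ gives at most
\[
 n\int_0^1\bigl(T_u\one_A(1^n)-T_u\one_A(0^n)\bigr)\frac{du}{u}.
\]
Couple the noisy copies of $0^n$ and $1^n$ with a common retention set and common fresh bits. They coincide unless some coordinate is retained, so the difference is at most $1-(1-u)^n$. Substituting $v=1-u$ and expanding $(1-v^n)/(1-v)=\sum_{k<n}v^k$ gives
\[
 n\int_0^1\frac{1-(1-u)^n}{u}\,du=n\sum_{k=1}^n\frac1k=d_n .
\]
This proves \eqref{chenapp:eq:oscillations}.

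The only step needing care is the Poisson identity, in particular exchanging $I-K$ with the integral and handling the singular weight $du/u$ at $u=0$. Both are harmless here because everything is a finite Walsh sum, so I expect no real obstacle.
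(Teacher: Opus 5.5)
Your proposal is correct and follows essentially the same route as the paper: monotonicity via the coupled noise operator, the Walsh identity $(K-I)T_u f=-\frac{u}{n}\partial_u T_u f$ integrated to give $(K-I)h_\rho=\mu(A)-T_\rho\mathbf 1_A$, the comparison showing that rejected moves are downward and so only increase the drift, and the $0^n$/$1^n$ coupling bound $1-(1-u)^n$, which integrates to $d_n$. The only cosmetic difference is that you bound the infinite sum over $j$ directly using nonnegativity of the integrand, whereas the paper first sums finitely many intervals and then passes to the limit.
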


\begin{proof}
The noise operator preserves increasing functions, so $h_\rho$ and
each difference $h_{\rho_j}-h_{\rho_{j-1}}$ are increasing. Recall the
Walsh characters, $K\chi_S=\left(1-\frac{|S|}{n}\right)\chi_S,$ $T_u\chi_S=u^{|S|}\chi_S.$%from the proof of Lemma~\ref{lem:mean-averaging}
It follows that \begin{equation}
 (K-I)T_u f
 = -\frac{u}{n}\,\partial_u T_u f.
 \label{eq:KT-derivative}
\end{equation}
 Applying \eqref{eq:KT-derivative} in \eqref{chenapp:eq:potential} gives
\[
 (K-I)h_\rho=\mu(A)-T_\rho\mathbf1_A.
\]
As in the argument of Proposition~\ref{prop:occupation}, every transition from a point of
$A$ to its complement is a downward move. Since $h_\rho$ is increasing,
replacing such a move by a hold can only increase its drift. Hence
$(P-I)h_\rho\ge(K-I)h_\rho$ on $A$, proving
\eqref{chenapp:eq:drift}.

To bound the oscillations, couple cube noise proposals from $0^n$ and
$1^n$ using the same retained coordinates and the same fresh bits.
The proposals agree if every coordinate is refreshed. Since $A$ is
increasing,
\[
 0\le T_u\mathbf1_A(1^n)-T_u\mathbf1_A(0^n)
 \le1-(1-u)^n.
\]
Each potential difference is increasing on the full cube, so
\begin{align*}
 \operatorname{osc}_A(h_{\rho_j}-h_{\rho_{j-1}})
 &\le(h_{\rho_j}-h_{\rho_{j-1}})(1^n)
      -(h_{\rho_j}-h_{\rho_{j-1}})(0^n)\\
 &\le n\int_{\rho_{j-1}}^{\rho_j}
            \frac{1-(1-u)^n}{u}\,du.
\end{align*}
Summing first over a finite number of intervals and then passing to
the limit gives
\[
 \sum_{j\ge1}\operatorname{osc}_A(h_{\rho_j}-h_{\rho_{j-1}})
 \le n\int_0^1\frac{1-(1-u)^n}{u}\,du
 =n\sum_{i=1}^n\frac1i=d_n.
\]
For the last equality, expand
$(1-(1-u)^n)/u=\sum_{i=0}^{n-1}(1-u)^i$ and integrate term by term.
\end{proof}

\subsection{The hitting time bound}

\begin{chenappprop}\label{chenapp:prop:hitting}
The censored walk satisfies
\begin{equation}\label{chenapp:eq:hitting}
 t_H^P(1/4)
 \le\frac{n}{\mu(A)}
 \left[32+2\big(\sqrt\Xi+\sqrt{2\log2}\big)^2
       +\frac{2d_n}{n}\right],
\end{equation}
where $\Xi=\ell\log\left(1+\frac{n\log2}{2\ell}\right)$, and $\ell=\log\frac{2}{\mu(A)}$
In particular,
\begin{equation}\label{chenapp:eq:hitting-constant}
 t_H^P(1/4)
 \le C_H\frac{n}{\mu(A)}\log\frac{e}{\mu(A)}\log(en),
\end{equation}
with $C_H=32+2\left[(1+\sqrt{2\log2})^2+1\right]<43.483.$
\end{chenappprop}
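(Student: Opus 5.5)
The plan is to run the dyadic-truncation argument of Section~\ref{sec:summation} with three substitutions. The flat truncations $f_s$ are replaced by the convex pieces $\phi_{t,q}(G)$. The core $C$ is replaced by the acceptance weight $T_{1-\theta}\mathbf 1_A$ from Lemma~\ref{chenapp:lem:noise}. The occupation inequality is replaced by the potentials $h_\rho$ of Lemma~\ref{chenapp:lem:potential}, paired level by level against the truncations.

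\emph{Levels and noise parameters.} Fix $B$ with $\pi(B)\ge1/4$, fix $x\in A\setminus B$, set $\varepsilon=\mu(A)/2$, $s=8n/\varepsilon=16n/\mu(A)$ and $t_j=q^js$ for a parameter $q>1$ to be optimized. Choose $\theta_j$ so that $(8n/t_j)^{\theta_j/(2-\theta_j)}=\varepsilon$. Since $8n/s=\varepsilon$, this gives
\[
\theta_j=\frac{2\ell}{2\ell+j\log q},\qquad \ell=\log\frac{2}{\mu(A)},
\]
so $\theta_0=1$, the $\theta_j$ decrease, and $\rho_j:=1-\theta_j$ is nondecreasing with $\rho_0=0$.

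\emph{Energy bound at level $j$.} Apply \eqref{chenapp:eq:noise} to $f=v_{t_j,q}(G)$. Its support lies in $\{G>t_j\}$, which has $\mu$-measure at most $8n/t_j$ by \eqref{chenapp:eq:green-tail}. By \eqref{chenapp:eq:trunc-energy}, $\mathcal E_A(f)\le t_jI(q)$. Since $f^2=t_j\phi_{t_j,q}(G)$, dividing by $t_j$ gives
\[
\pi\big((T_{\rho_j}\mathbf1_A)\,\phi_j\big)\le 2n\theta_jI(q)+\varepsilon\,\pi(\phi_j),\qquad \phi_j:=\phi_{t_j,q}(G).
\]

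\emph{Complementary lower bound and summation.} Multiply the drift inequality \eqref{chenapp:eq:drift} by $\phi_j\ge0$ and integrate against $\pi$:
\[
\mu(A)\pi(\phi_j)-\pi\big((T_{\rho_j}\mathbf1_A)\phi_j\big)\le\langle\phi_j,(P-I)h_{\rho_j}\rangle_\pi .
\]
Write $h_{\rho_j}=\sum_{i\le j}\Delta_i$ with $\Delta_i=h_{\rho_i}-h_{\rho_{i-1}}$ (and $h_{\rho_0}=h_0=0$). Swap the order of summation and use the telescoping identity \eqref{chenapp:eq:telescoping} in the form $\sum_{j\ge i}\phi_j=\psi_{t_i}(G)$. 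This gives
\[
\sum_j\langle\phi_j,(P-I)h_{\rho_j}\rangle_\pi=\sum_i\langle\psi_{t_i}(G),(P-I)\Delta_i\rangle_\pi
\le\sum_i\operatorname{osc}_A\Delta_i\le d_n,
\]
by the flux bound \eqref{chenapp:eq:flux} and the oscillation bound \eqref{chenapp:eq:oscillations}. All sums are finite, because $G\le8n2^n$ forces $\phi_j=0$ once $q^js>8n2^n$, that is, for $j\gtrsim n\log2/\log q$.

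\emph{Absorption.} Summing over $j$ and absorbing $\varepsilon\pi(\psi_s(G))$ gives
\[
\tfrac{\mu(A)}{2}\,\pi(\psi_s(G))\le 2nI(q)\sum_j\theta_j+d_n .
\]
Since $\pi(\psi_s(G))\ge\pi(G)-2s=\mathbb E_x\tau_B-32n/\mu(A)$, we get
\[
\mathbb E_x\tau_B\le\frac{n}{\mu(A)}\Big[32+4I(q)\sum_j\theta_j+\frac{2d_n}{n}\Big].
\]

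\emph{Main obstacle.} The remaining work is the calculus: bound $\sum_j\theta_j$ by an integral comparison over $0\le j\lesssim n\log2/\log q$, then optimize over $q$. The integral comparison should give roughly $1+\frac{2\ell}{\log q}\log\bigl(1+\frac{n\log2}{2\ell}\bigr)=1+\frac{2\Xi}{\log q}$ (I expect the $\log q$ in the cutoff to cancel against the $\log q$ in $\theta_j$). Combining this with $I(q)\le\frac14\log q+\log2$ gives a product of the form $(a\log q+b)(1+c/\log q)$. Optimizing over $\log q$ should produce $2\big(\sqrt\Xi+\sqrt{2\log2}\big)^2$. The delicate points are the leftover $+1$ boundary term and the exact constants, since the claimed bound has no slack; I would first try $\log q\approx\sqrt{8\Xi\log2}$ and track the endpoint carefully. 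The bound \eqref{chenapp:eq:hitting-constant} then follows from $\Xi\le\log\frac e{\mu(A)}\log(en)$ up to constants and $d_n\le n\log(en)$.
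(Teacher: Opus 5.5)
Your proposal is correct and follows the paper's proof essentially step for step. It uses the same choices $s=16n/\mu(A)$ and $\theta_j=2\ell/(2\ell+j\log q)$, applies Lemma~\ref{chenapp:lem:noise} to $v_{q^js,q}(G)$, and tests the drift inequality against $\phi_j$. It then performs the same summation by parts into terms $\mathcal E_A(h_{\rho_i}-h_{\rho_{i-1}},\psi_{q^is}(G))$, controls them by \eqref{chenapp:eq:flux} and \eqref{chenapp:eq:oscillations}, and closes with the same integral bound $\sum_j\theta_j\le1+2\Xi/\log q$. Your concern about the endpoint is unfounded: $2I(q)(1+2\Xi/\log q)\le\Xi+2\log2+\frac12\log q+\frac{4\Xi\log2}{\log q}$, where the $+1$ term supplies $2\log2+\frac12\log q$, and the choice $\log q=\sqrt{8\Xi\log2}$ makes the right-hand side exactly $(\sqrt\Xi+\sqrt{2\log2})^2$.
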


\begin{proof}

%Retain the fixed $B$, $x$, and $G$ from Section~\ref{chenapp:subsec:noise}.

Choose $q>1$, and let $s=\frac{16n}{\mu(A)}$, and $\theta_j=\frac{2\ell}{2\ell+j\log q}.$ By \eqref{chenapp:eq:green-tail}, $q^Js\ge\frac{16n}{\mu(A)}2^n>\max_A G$, where $J=\left\lceil\frac{n\log2}{\log q}\right\rceil,
$ and
\[
 \mu\{y\in A:G(y)>q^js\}\le\frac{\mu(A)}2q^{-j}.
\]
For $0\le j<J$, write $\phi_j=\phi_{q^js,q}(G)$ and
$h_j=h_{1-\theta_j}$. %Both functions are evaluated on $A$.

Apply \eqref{chenapp:eq:noise} to $v_{q^js,q}(G)$. Its square is
$q^js\phi_j$, its zero extension is supported on $\{G>q^js\}$, and
its energy is at most $q^jsI(q)$ by
\eqref{chenapp:eq:trunc-energy}. Moreover, $ \frac{\theta_j}{2-\theta_j}
 =\frac{\ell}{\ell+j\log q},$
\[
 \left[\frac{\mu(A)}2q^{-j}\right]^{\theta_j/(2-\theta_j)}
 =e^{-\ell}=\frac{\mu(A)}2.
\]

Dividing by $q^js$ gives
\begin{equation}\label{chenapp:eq:weighted-level}
 \pi\big((T_{1-\theta_j}\mathbf1_A)\phi_j\big)
 \le2nI(q)\theta_j+\frac{\mu(A)}2\pi(\phi_j).
\end{equation}

On the other hand, multiply \eqref{chenapp:eq:drift} by the
nonnegative function $\phi_j$ and integrate against $\pi$. By the
definition of the Dirichlet form,
\begin{equation}
 \pi\big((T_{1-\theta_j}\mathbf{1}_A)\phi_j\big)
 \ge \mu(A)\pi(\phi_j)+\mathcal E_A(h_j,\phi_j).
 \label{eq:TA-phi-lower}
\end{equation}

Combining \eqref{chenapp:eq:weighted-level}, \eqref{eq:TA-phi-lower} and summing over $0\le j<J$ yields
\begin{equation}\label{chenapp:eq:summed-levels}
 \frac{\mu(A)}2\pi(\psi_s(G))
 \le2nI(q)\sum_{j=0}^{J-1}\theta_j
    -\sum_{j=0}^{J-1}\mathcal E_A(h_j,\phi_j).
\end{equation}
Here we used $\psi_{q^Js}(G)=0$ and
\eqref{chenapp:eq:telescoping}.

Since $h_0=0$, summation by parts gives
\begin{align*}
 \sum_{j=0}^{J-1}\mathcal E_A(h_j,\phi_j)
 &=\sum_{j=0}^{J-1}
   \mathcal E_A(h_j,\psi_{q^js}(G)-\psi_{q^{j+1}s}(G))\\
 &=\sum_{j=1}^{J-1}
   \mathcal E_A(h_j-h_{j-1},\psi_{q^js}(G)).
\end{align*}

The sequence $(1-\theta_j)$ is nondecreasing. Consequently,
\eqref{chenapp:eq:flux} and \eqref{chenapp:eq:oscillations} imply
\[
 -\sum_{j=0}^{J-1}\mathcal E_A(h_j,\phi_j)
 \le\sum_{j=1}^{J-1}\operatorname{osc}_A(h_j-h_{j-1})
 \le d_n.
\]
Thus the total error is bounded by $d_n$, independently of the number
of truncation levels.

The summand $2\ell/(2\ell+u\log q)$ is decreasing in $u\ge0$, and
$J-1<n\log2/\log q$. Hence
\begin{align*}
 \sum_{j=0}^{J-1}\theta_j
 &\le1+\int_0^{J-1}\frac{2\ell}{2\ell+u\log q}\,du\\
 &\le1+\frac{2\ell}{\log q}
       \log\left(1+\frac{n\log2}{2\ell}\right)
 =1+\frac{2\Xi}{\log q}.
\end{align*}

Using \eqref{chenapp:eq:Iq}, we obtain
\begin{equation}
 2I(q)\sum_{j=0}^{J-1}\theta_j
 \le \Xi+\frac{\log q}{2}+2\log 2
 +\frac{4\Xi\log 2}{\log q}.
 \label{eq:Iq-theta-sum}
\end{equation}

Choose $q>1$ so that $\log q=\sqrt{8\Xi\log2}$. The right-hand side of \eqref{eq:Iq-theta-sum} then equals
$(\sqrt\Xi+\sqrt{2\log2})^2$. Substituting in
\eqref{chenapp:eq:summed-levels} and using
$\pi(\psi_s(G))\ge\pi(G)-2s=\mathbb E_x\tau_B-2s$, we conclude that
\begin{equation}
 \mathbb E_x \tau_B
 \le \frac{32n}{\mu(A)}
 + \frac{2n}{\mu(A)}\bigl(\sqrt{\Xi}+\sqrt{2\log 2}\bigr)^2
 + \frac{2d_n}{\mu(A)}.
 \label{eq:hitting-time-upper}
\end{equation}

\eqref{eq:hitting-time-upper} is uniform in $B$ and $x\in A\setminus B$, and the same bound is
trivial for $x\in B$. Taking the maximum proves
\eqref{chenapp:eq:hitting}.

The function $u\mapsto u\log(1+c/u)$ is increasing for $u>0$ when
$c>0$. Since $\ell\le\log(e/\mu(A))$, it follows that
\begin{equation}\label{chenapp:eq:Xi-bound}
 \Xi\le\log\frac{e}{\mu(A)}
       \log\left(1+\frac{n}{\log(e/\mu(A))}\right)
 \le\log\frac{e}{\mu(A)}\log(en).
\end{equation}
The last inequality follows from $\log(e/\mu(A))\ge1$ and
$1+n\le en$. Substituting \eqref{chenapp:eq:Xi-bound} and $d_n/n\le\log(en)$
into \eqref{chenapp:eq:hitting} gives \eqref{chenapp:eq:hitting-constant},
since $\log(e/\mu(A))\log(en)\ge1$.
\end{proof}

\subsection{From hitting times to mixing}

Theorem~2.3 already gives a mixing bound up to an absolute constant.
To retain an explicit constant in \eqref{chenapp:eq:mixing-constant},
we use the following quantitative version. We follow the argument of
Basu, Hermon, and Peres~\cite{BHP}, using a maximal inequality to pass
from hitting to mixing. %The details below keep track of the constants.

\begin{chenapplemma}\label{chenapp:lem:direct-tv}
Let $Q$ be a finite irreducible lazy reversible Markov kernel on a state
space with at least two points, with stationary distribution $\nu$ and
spectral gap $\lambda>0$. Set $H=t_H^Q(1/4)$. Then
\begin{equation}\label{chenapp:eq:direct-tv}
 t_{\mathrm{mix}}(Q)
 \le\lceil H\rceil+\lceil H\log8\rceil
   +\left\lceil\frac{1}{2\lambda}\log\frac{512}{3}\right\rceil
 <5.65H+3.
\end{equation}
\end{chenapplemma}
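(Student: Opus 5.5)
The plan is a three-phase coupling-free argument in the style of Basu--Hermon--Peres~\cite{BHP}. Fix an initial state $x$ and a set $D$. We show $|Q^t(x,D)-\nu(D)|\le 1/4$ for $t=T_1+s$, where
\[
T_1=\lceil H\rceil+\lceil H\log 8\rceil,
\qquad
s=\left\lceil\tfrac{1}{2\lambda}\log\tfrac{512}{3}\right\rceil .
\]
The mechanism is that the chain first hits a large ``good'' set of starting points, from which every later time is already mixed. It is a maximal inequality that makes ``every later time'' possible.

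\emph{Step 1 (good set via spectral gap and a maximal inequality).} Put $g=Q^s(\mathbf 1_D-\nu(D))$. Since $\nu(\mathbf 1_D-\nu(D))=0$, the spectral gap gives
\[
\|g\|_{2,\nu}^2\le e^{-2\lambda s}\,\nu(D)(1-\nu(D))\le \tfrac{3}{512}\cdot\tfrac14 .
\]
Because $Q$ is lazy and reversible, it is positive semidefinite. Starr's maximal inequality (the $L^2$ maximal inequality for $Q^r$, $r\ge0$) then gives $\|\sup_{r\ge0}|Q^r g|\|_{2,\nu}\le 2\|g\|_{2,\nu}$. Define the bad set
\[
W=\Bigl\{y:\ \sup_{r\ge s}|Q^r\mathbf 1_D(y)-\nu(D)|>\tfrac18\Bigr\}.
\]
Chebyshev's inequality gives $\nu(W)\le 64\cdot4\cdot\tfrac{3}{2048}=\tfrac38$. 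Hence the good set $W^c$ satisfies $\nu(W^c)\ge 5/8\ge1/4$. I will double-check the exact form of the maximal inequality, in particular whether it holds for all $r$ or only even $r$; laziness should give all $r$.

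\emph{Step 2 (hitting the good set quickly).} By definition of $H$, $\E_y\tau_{W^c}\le H$ for every $y$. I want $\Prob_x(\tau_{W^c}>T_1)\le 1/8$. First, Markov's inequality gives $\Prob_y(\tau>\lceil H\rceil\cdot c)\le 1/c$ uniformly in $y$. More precisely, I will use the exponential tail for hitting times with uniformly bounded expectation, $\Prob(\tau>\lceil H\rceil+u)\le e^{-u/H}$. I would prove it by combining the Markov property at integer times with the convexity bound $\sup_y\Prob_y(\tau>r+1)\le \sup_y\Prob_y(\tau>r)\,(1-1/H)$ after the first $\lceil H\rceil$ steps. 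Taking $u=\lceil H\log 8\rceil$ gives $1/8$. This is the step I expect to be the main obstacle: making the constant exactly $\log 8$ rather than something like $2\log 8$ requires a careful choice of the tail argument. If the clean exponential tail fails, my fallback is the submultiplicative estimate $\Prob(\tau>kmH)\le m^{-k}$, optimized in $m$, and accepting a slightly worse constant.

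\emph{Step 3 (assembly).} By the strong Markov property at $\tau=\tau_{W^c}$, on the event $\{\tau\le T_1\}$ the remaining time satisfies $t-\tau\ge s$. Since $X_\tau\notin W$, we get $|Q^{t-\tau}\mathbf 1_D(X_\tau)-\nu(D)|\le1/8$. Therefore
\[
|Q^t(x,D)-\nu(D)|\le \tfrac18+\Prob_x(\tau>T_1)\le\tfrac14 ,
\]
which proves the first inequality in \eqref{chenapp:eq:direct-tv}.

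\emph{Step 4 (numerical form).} For the numerical form I need $1/\lambda\le H$. Take an eigenfunction $f$ with eigenvalue $1-\lambda$. Let $B$ be whichever of $\{f\le0\}$, $\{f\ge0\}$ has $\nu(B)\ge1/2$, replacing $f$ by $-f$ if needed so that $B=\{f\le0\}$. Start at $x$ maximizing $f$, so $f(x)>0$. Then $(1-\lambda)^{-t}f(X_t)$ is a martingale, and stopping at $t\wedge\tau_B$ gives
\[
f(x)\le(1-\lambda)^{-t}f(x)\,\Prob_x(\tau_B>t).
\]
Hence $\Prob_x(\tau_B>t)\ge(1-\lambda)^t$, and summing over $t$ gives $H\ge\E_x\tau_B\ge1/\lambda$. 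The bound is then
\[
H+1+H\log8+1+\tfrac{H}{2}\log\tfrac{512}{3}+1<5.65H+3,
\]
using $1+\log 8+\tfrac12\log\tfrac{512}{3}\approx 5.65$.
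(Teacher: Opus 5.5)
The genuine gap is in Step 2. Your target $\mathbb P_x(\tau>\lceil H\rceil+u)\le e^{-u/H}$ is correct. The mechanism you propose for it is the per-step contraction
$$\sup_y\mathbb P_y(\tau>r+1)\le(1-1/H)\sup_y\mathbb P_y(\tau>r)\quad\text{for } r\ge\lceil H\rceil,$$
which you assert without proof. It does not follow from $\max_y\mathbb E_y\tau\le H$. Take the birth--death chain on $\{0,1,2\}$ killed at $0$, with $2\to1$ surely and $1\to0$, $1\to2$ with probabilities $a$ and $1-a$. Then $\max_y\mathbb E_y\tau=2/a$, but $\sup_y\mathbb P_y(\tau>m)=(1-a)^{\lfloor m/2\rfloor}$. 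This supremum does not decrease at all from $m=2k$ to $m=2k+1$, however large $k$ is.

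That chain is not lazy, but you give no argument that laziness or reversibility rescues the inequality. Your fallback via submultiplicativity only gives $T_1\approx eH\log 8$. That proves neither the stated bound $\lceil H\rceil+\lceil H\log8\rceil+\lceil(2\lambda)^{-1}\log(512/3)\rceil$ nor the final $5.65H+3$.

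The missing idea is to contract the residual expectation for a fixed starting point, not the tail probabilities.
\begin{itemize}
\item Put $m_j=\mathbb E_x[(\tau-j)_+]=\sum_{i\ge j}\mathbb P_x(\tau>i)$.
\item The Markov property at time $j$ gives $m_j=\mathbb E_x[\mathbf 1_{\{\tau>j\}}\mathbb E_{X_j}\tau]\le H\,\mathbb P_x(\tau>j)$.
\item Since $m_{j+1}=m_j-\mathbb P_x(\tau>j)$, this yields $m_{j+1}\le(1-1/H)m_j$, hence $m_j\le He^{-j/H}$.
\item Because $j\mapsto\mathbb P_x(\tau>j)$ is nonincreasing, $\lceil H\rceil\,\mathbb P_x(\tau>T)\le\sum_{j=T-\lceil H\rceil}^{T-1}\mathbb P_x(\tau>j)\le m_{T-\lceil H\rceil}$.
\item Taking $T=\lceil H\rceil+\lceil H\log 8\rceil$ gives $\mathbb P_x(\tau>T)\le 1/8$, without using laziness.
\end{itemize}
This is the paper's argument. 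Once you substitute it, the rest of your outline, which has the same architecture as the paper, goes through.

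Two smaller points. First, Starr's inequality gives Doob's constant only along even times. Handling odd times by applying it to $Qg$ gives $\|\sup_{r\ge0}|Q^rg|\|_{2,\nu}^2\le 8\|g\|_{2,\nu}^2$. Your Chebyshev bound then becomes $\nu(W)\le 3/4$ rather than $3/8$. That is still exactly enough, since the constant $512/3$ is calibrated for this. Second, in Step 4 your multiplicative martingale $(1-\lambda)^{-t}f(X_t)$ is a valid alternative to the paper's additive one. You do need to treat $\lambda=1$ separately, which is trivial because $H\ge1$.
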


\begin{proof}
We first establish the maximal inequality
\begin{equation}\label{chenapp:eq:maximal}
 \left\|\sup_{j\ge0}|Q^jf|\right\|_{2,\nu}^2
 \le8\|f\|_{2,\nu}^2.
\end{equation}
The corresponding even-time bound is the $L^2$ case of Starr's maximal
inequality~\cite{Starr}. In the present setting it follows directly
from reversibility and Doob's inequality.

Let $(Y_j)_{j\ge0}$ be the $Q$-chain, initially distributed according
to $\nu$, and fix $M\ge0$. For $0\le j\le M$, reversibility and the
Markov property give
\[
 M_j=\mathbb E_\nu[f(Y_0)\mid Y_j,\ldots,Y_M]=Q^jf(Y_j).
\]
The sequence $(M_j)_{j=0}^M$ is a reverse martingale. Moreover,
$\mathbb E_\nu[M_j\mid Y_0]=Q^{2j}f(Y_0)$. Conditional Jensen
followed by Doob's $L^2$ inequality gives
\[
 \left\|\max_{0\le j\le M}|Q^{2j}f|\right\|_{2,\nu}^2
 \le\mathbb E_\nu\max_{0\le j\le M}|M_j|^2
 \le4\|f\|_{2,\nu}^2.
\]
Applying the same estimate to $Qf$ bounds the odd times by
$4\|Qf\|_{2,\nu}^2\le4\|f\|_{2,\nu}^2$. Add the two estimates and
let $M\to\infty$ to obtain \eqref{chenapp:eq:maximal}.

We next bound the tail of a hitting time. Fix $B$ with $\nu(B)\ge1/4$
and a starting state $x$, and put $f(y)=\mathbb E_y^Q\tau_B$. Thus
$0\le f\le H$. For integers $j\ge0$, write
\[
 m_j=\mathbb E_x^Q[f(Y_j)\mathbf1_{\{\tau_B>j\}}].
\]
The Markov property and the tail-sum identity give
\[
 m_j=\mathbb E_x^Q[(\tau_B-j)_+]
     =\sum_{i\ge j}\mathbb P_x^Q(\tau_B>i)\le H\mathbb P_x^Q(\tau_B>j).
\]
Since $H\ge1$, it follows that
$m_{j+1}=m_j-\mathbb P_x^Q(\tau_B>j)\le(1-H^{-1})m_j$, and hence
$m_j\le He^{-j/H}$. For integers $T\ge r\ge1$,
\[
 r\,\mathbb{P}_x^Q(\tau_B > T)\le\sum_{j=T-r}^{T-1}\mathbb P_x^Q(\tau_B>j)
 \le m_{T-r}\le He^{-(T-r)/H}.
\]
Take $r=\lceil H\rceil$ and $T=\lceil H\rceil+\lceil H\log8\rceil$.
Then, uniformly over $x$ and $B$ with $\nu(B)\ge1/4$,
\begin{equation}\label{chenapp:eq:hitting-tail}
 \mathbb P_x^Q(\tau_B>T)\le\frac18.
\end{equation}

Let $k=\lceil(2\lambda)^{-1}\log(512/3)\rceil$. For an arbitrary
event $E$ in the state space, put $u=\mathbf1_E-\nu(E)$ and define
\[
 B_E=\left\{y:\sup_{j\ge k}|Q^ju(y)|\le\frac18\right\}.
\]
Laziness and reversibility give
$\|Q^ku\|_{2,\nu}^2\le e^{-2\lambda k}\|u\|_{2,\nu}^2$.
Using \eqref{chenapp:eq:maximal} with $Q^ku$ and
$\nu(u^2)=\nu(E)(1-\nu(E))\le1/4$, we get
\[
 \nu(B_E^c)
 \le64\left\|\sup_{j\ge k}|Q^ju|\right\|_{2,\nu}^2
 \le512\|Q^ku\|_{2,\nu}^2
 \le128e^{-2\lambda k}\le\frac34.
\]
Thus $\nu(B_E)\ge1/4$. On $\{\tau_{B_E}\le T\}$, the remaining
time until $T+k$ is at least $k$. The strong Markov property,
\eqref{chenapp:eq:hitting-tail}, and $|u|\le1$ imply
\begin{align*}
 |Q^{T+k}(x,E)-\nu(E)|
 &\le\mathbb E_x^Q\!\left[
    \mathbf1_{\{\tau_{B_E}\le T\}}
    |Q^{T+k-\tau_{B_E}}u(Y_{\tau_{B_E}})|\right]
    +\mathbb P_x^Q(\tau_{B_E}>T)\\
 &\le\frac18+\frac18=\frac14.
\end{align*}
Taking the supremum over $E$ proves the first inequality in
\eqref{chenapp:eq:direct-tv}.

Finally, we show that $\lambda^{-1}\le H$. Choose a nonconstant
eigenfunction $v$ with $Qv=(1-\lambda)v$. After changing its sign and
rescaling, we may assume that $\max v=1$ and $\nu(v\le0)\ge1/2$.
Start the chain at a maximizer $x$, and let $B=\{v\le0\}$.
The process $v(Y_j)+\lambda\sum_{i=0}^{j-1}v(Y_i)$ is a martingale. Stop it at $\tau_B\wedge M$ and let $M\to\infty$.
The passage to the limit is justified because $v$ is bounded and
$\mathbb E_x^Q\tau_B\le H<\infty$. Since $v(Y_{\tau_B})\le0$ and
$0<v(Y_j)\le1$ before $\tau_B$,
\begin{equation}
 1
 = \mathbb E_x^Q v(Y_{\tau_B})
 + \lambda \mathbb E_x^Q \sum_{j=0}^{\tau_B-1} v(Y_j)
 \le \lambda \mathbb E_x^Q \tau_B
 \le \lambda H.
 \label{eq:lambda-H-lower}
\end{equation}
Using \eqref{eq:lambda-H-lower} and $\lceil a\rceil<a+1$ in the first inequality of
\eqref{chenapp:eq:direct-tv}, we obtain
\[
 t_{\mathrm{mix}}(Q)
 <\left(1+\log8+\frac12\log\frac{512}{3}\right)H+3
 <5.65H+3.
\]
\end{proof}

\begin{proof}[Proof of Theorem~\ref{chenapp:thm:mixing}]
Apply Lemma~\ref{chenapp:lem:direct-tv} with $Q=P$ and $\nu=\pi$.
Combining \eqref{chenapp:eq:hitting} and \eqref{chenapp:eq:Xi-bound}
with $(\sqrt\Xi+\sqrt{2\log2})^2\le2\Xi+4\log2$ and
$d_n\le n\log(en)$ gives \eqref{chenapp:eq:mixing-order}, after
adjusting the absolute constant.
Moreover, \eqref{chenapp:eq:hitting-constant} gives
\[
 t_{\mathrm{mix}}(P)
 <5.65C_H\frac{n}{\mu(A)}\log\frac{e}{\mu(A)}\log(en)+3.
\]
Since $5.65C_H+3<250$ and
$\frac{n}{\mu(A)}\log\frac{e}{\mu(A)}\log(en)\ge1$,
this proves \eqref{chenapp:eq:mixing-constant}.
\end{proof}

\section*{Acknowledgments}

Yiming thanks Dr.~Fan Chang for sharing this problem and for
providing updates on related work. This work is supported by the National
Natural Science Foundation of China (Grant Nos.~12595294 and 12231002)
and the New Cornerstone Science Foundation (Grant No.~NCI202501).

\end{document}